\theoremstyle{plain}
\newtheorem{theorem}{Theorem}[section]
\newtheorem{lemma}[theorem]{Lemma}
\newtheorem{corollary}[theorem]{Corollary}
\theoremstyle{definition}
\newtheorem{definition}[theorem]{Definition}
\theoremstyle{definition}
\newtheorem{remark}[theorem]{Remark}
\numberwithin{equation}{section}
\numberwithin{figure}{section}
\newcommand{\Phia}{U}
\newcommand{\pmu}{\partial_{\mu}}
\newcommand{\ptwomu}{\partial^2_{\mu}}
\newcommand{\pnmu}{\partial^n_{\mu}}
\newcommand{\law}[1][] {\mathscr L({#1})}
\newcommand{\nlaw}[1][]{\mathfrak{X}^N_{#1}}
\newcommand{\claw}[1]{\mathscr L{#1}}
\newcommand{\bE}{\mathbb{E}}
\newcommand{\bN}{\mathbb{N}}
\newcommand{\bP}{\mathbb{P}}
\newcommand{\bR}{\mathbb{R}}
\newcommand{\mylabel}[2]{#2\def\@currentlabel{#2}\label{#1}}
\newenvironment{magetext}{\color{magenta}}{\ignorespacesafterend}
\newcommand{\cC}{\mathcal{C}}
\newcommand{\cD}{\mathcal{D}}
\newcommand{\cF}{\mathcal{F}}
\newcommand{\cG}{\mathcal{G}}
\newcommand{\cH}{\mathcal{H}}
\newcommand{\cI}{\mathcal{I}}
\newcommand{\cK}{\mathcal{K}}
\newcommand{\cL}{\mathcal{L}}
\newcommand{\cM}{\mathcal{M}}
\newcommand{\cO}{\mathcal{O}}
\newcommand{\cP}{\mathcal{P}}
\newcommand{\cV}{\mathcal{V}}
\newcommand{\cW}{\mathcal{W}}
\newcommand{\cX}{\mathcal{X}}
\newcommand{\R}{\mathbb{R}}
\def \eproof{\hbox{ }\hfill$\Box$}
\newcommand{\ud}{\mathrm{d}}
\newcommand{\set}[1]
    {\ensuremath{\{ #1 \}}}
\newcommand{\HP}[1] 
    {\ensuremath{\mathscr{H}^{#1}}}
\newcommand{\esp}[1]{\ensuremath{\mathbb{E} \!\! \left[#1\right] }}
\renewcommand{\Xi}[1]{X_{i #1}}
\newcommand{\Tr}[1]{\ensuremath{ \mathrm{Tr}\!\left[ #1 \right]} }
\title{Weak quantitative propagation of chaos via differential calculus on the space of measures }
\author{Jean-Fran\c{c}ois Chassagneux}
\author[2]{Lukasz Szpruch}
\author[2]{Alvin Tse}
\affil[1]{LSPM, Universit\'{e} Paris Diderot}
\affil[2]{School of Mathematics, University of Edinburgh}
\date{}
\begin{document}	
\maketitle

\begin{abstract}
Consider the metric space $(\mathcal{P}_2(\bR^d),W_2)$ of square integrable laws on $\bR^d$ with the topology induced by the 2-Wasserstein distance $W_2$. Let $\Phi: \mathcal{P}_2( \bR^d) \to \bR$ be a function and $\mu_N$ be the empirical measure of a sample of $N$ random variables distributed as $\mu$. The main result of this paper is to show that under suitable regularity conditions, 
we have
\[
|\Phi(\mu) -  \bE\Phi(\mu_N)|= \sum_{j=1}^{k-1}\frac{C_j}{N^j} + O(\frac{1}{N^k}),
\]
for some positive constants $C_1, \ldots, C_{k-1}$ that do not depend on $N$, where $k$ corresponds to the degree of smoothness. We distinguish two cases: a) $\mu_N$ is the empirical measure of $N$-samples from $\mu$; b) $\mu$ is a marginal law of McKean-Vlasov stochastic differential equation in which case $\mu_N$ is an empirical law of marginal laws of the corresponding particle system. The first case is studied using functional derivatives on the space of measures. The second case relies on an It\^{o}-type formula for the flow of probability measures and is intimately connected to PDEs on the space of measures, called the master equation in the literature of mean-field games. We state the general regularity conditions required for each case and analyse the regularity in the case of functionals of the laws of McKean-Vlasov SDEs. Ultimately, this work reveals quantitative estimates of propagation of chaos  for interacting particle systems. Furthermore, we are able to provide weak propagation of chaos estimates for ensembles of interacting particles and show that these may have some remarkable properties.

%

\end{abstract}

\section{Introduction}
The aim of this work is to provide an exact weak error expansion between a (nonlinear) functional $\Phi: \mathcal{P}_2( \bR^d) \to \bR$ of the empirical measure $\mu_N \in \mathcal P_2(\bR^d)$  and its deterministic limit $\Phi(\mu)$, $\mu \in  \mathcal{P}_2( \bR^d)$.  We distinguish two cases:  a) $\mu_N$ is the empirical measure of $N$-samples from $\mu$; b) $\mu$ is the marginal law of a process described by a McKean-Vlasov stochastic differential equation (McKV-SDE), in which case $\mu_N$ is the empirical measure of the marginal laws of the corresponding particle system. 

In the first case where $\mu_N$ is the empirical measure of $N$-samples from $\mu$,  the only interesting case is when the functional $\Phi$ is  non-linear. To provide some context to our results,  one may, for example,  assume that $\Phi$ is Lipschitz continuous with respect to the Wasserstein distance, i.e, there exists a constant $C>0$ such that
\[
|\Phi(\mu) - \Phi(\nu)| \leq \, C W_2(\mu,\nu), \, \qquad \forall  \mu,\nu \in \mathcal P_2(\bR^d)\,,
\]
one could bound $|\Phi(\mu) -  \bE\Phi(\mu_N)|$ by $\bE W_2(\mu,\mu_N)$. Consequently, following \cite{fournier2015rate} or \cite{dereich2013constructive}, the rate of convergence in the number of samples $N$  deteriorates as the dimension $d$ increases. On the other hand, recently,  authors \cite[Lem. 5.10]{delarue2018master} made a remarkable observation that if the functional $\Phi$ is twice-differentiable with respect to the functional derivative (see Section \ref{sec lfd}), then one can obtain a dimension-independent bound for the strong error $\bE|\Phi(\mu) -  \Phi(\mu_N)|^p$, $p\leq 4$, which is of order $O(N^{-1/2})$ (as expected by  CLT). Here, we study a weak error and show that, (see Theorem  \ref{eq:main result with regularity: final}) if $\Phi$ is $(2k+1)$-times differentiable with respect to the functional derivative, then indeed we have 
\[
|\Phi(\mu) -  \bE\Phi(\mu_N)|= \sum_{j=1}^{k-1}\frac{C_j}{N^j} + O(\frac{1}{N^k}),
\]
for some positive and explicit constants $C_1, \ldots, C_{k-1}$ that do not depend on $N$. The result is of independent interest, but is also needed to obtain a complete expansion for the error in particle  approximations of McKV-SDEs that we discuss next. 

The second situation we treat in this work concerns estimates of propagation-of-chaos type \footnote{We would like to remark that our results also cover a situation where the law $\mu$ is induced by a system of stochastic differential equations with random initial conditions that are not of McKean-Vlasov type in which case the samples are i.i.d.}.
Consider a probability space $(\Omega, \mathcal{F}, \bP)$ with a $d$-dimensional Brownian motion $W$. We are interested in the McKean-Vlasov process $ \{X^{s, \xi}_t \}_{t \in [s,T]}$ with interacting kernels $b$ and $\sigma$, starting from a random variable $\xi$, defined by the SDE\footnote{{We assume without loss of generality that the dimensions of $X$ and $W$ are the same because we will not make any non-degeneracy assumption on the diffusion coefficient $\sigma$ in our work. In particular, one dimension of $X$ could be time itself.}} 
\begin{equation} \label{eq:McKeanflow1}
    X^{0, \xi}_t = \xi + \int_0^t  b(X^{0, \xi}_r,   \law[{X^{0,  \xi}_r}]  )  \,dr + \int_0^t \sigma (X^{s, \xi}_r,   \law[{X^{0,  \xi}_r}] )  \,dW_r, \quad t \in [s,T],
\end{equation}
where $\law[X^{s, \xi}_r]$ denotes the law of $X^{s, \xi}_r$ and functions $b=(b_i)_{1 \leq i \leq d}: \bR^d \times \mathcal{P}_2( \bR^d) \to \bR^d$ and $\sigma=(\sigma_{i,j})_{1 \leq i,j \leq  d}: \bR^d \times \mathcal{P}_2( \bR^d)  \to \bR^d \otimes \bR^{{d}}$ satisfy suitable conditions so that there exists a unique weak solution (see e.g. \cite{sznitman1991topics} or, for more up-to-date panorama on research on existence and uniqueness, see \cite{mishura2016existence,hammersley2018mckean,barbu2018nonlinear,de2018well}). McKean-Vlasov SDE \eqref{eq:McKeanflow1} can be derived as a limit of interacting diffusions. Indeed, one can approximate the law $\law[{X^{0,\xi}_{\cdot}}]$ by the empirical measure $  \nlaw[\cdot]:=\frac{1}{N} \sum_{j=1}^N \delta_{\cX^{j,N}_{\cdot}}$  generated by $N$ particles $\{ \cX^{i,N} \}_{1 \leq i \leq N} $ defined as 
\begin{equation} \label{eq particles}
    \cX^{i,N}_t= \xi_i + \int_0^t  b \bigg( \cX^{i,N}_r, \mathfrak{X}^N_r \bigg)  \,dr + \int_0^t  \sigma \bigg( \cX^{i,N}_r,\nlaw[r] \bigg)    \,  dW^i_r, \quad 1 \leq i \leq N, \quad t \in [0,T],
\end{equation}
where $W^{i},$ $1 \leq i \leq N$,  are independent $d$-dimensional Brownian motions and $\xi_i,$ $1 \leq i \leq N$, are i.i.d. random variables with the same distribution as $\xi$. It is well known, \cite[Prop. 2.1]{sznitman1991topics}, that the property of propagation of chaos  is equivalent to weak convergence of measure-valued random variables $\nlaw[t]$ to $\claw(X_{t})$. A common strategy is to establish tightness of $\pi^N = \claw( \nlaw[t]) \in \mathcal P (\mathcal P (\bR^d))$ and to identify the limit by showing that $\pi^N$ converges weakly to $\delta_{\claw(X_{t})}$. This approach does not reveal quantitative bounds we seek in this paper, but is a very active area of research. We refer the reader to \cite{gartner1988mckean,sznitman1991topics,meleard1996asymptotic} for the classical results in this direction and to \cite{jourdain2007nonlinear,bossy2011conditional,fournier2016propagation,mischler2013kac,lacker2018strong} for an  account (non-exhaustive) of recent results. On the other hand, the results on quantitative propagation of chaos  are few and far in between. In the case when coefficients of \eqref{eq:McKeanflow1} depend on the measure component linearly, i.e., are of the form
\[
b(x, \mu) = \int_{\bR^d} B(x,y) \, \mu(dy) , \quad \sigma(x, \mu) =  \int_{\bR^d} \Sigma(x,y) \mu(dy)\,, 
\]
with $B,\Sigma$ being Lipschitz continuous in both variables, it  follows from a simple calculation 
\cite{sznitman1991topics} to see that $W_2(\claw(\cX^{i,N}_t), \claw(X^{0, \xi}_t))=O(N^{-1/2})$. We refer to Sznitman's result as strong propagation of chaos. Note that in this work we treat the case of McKean-Vlasov SDEs with coefficients with general measure dependence. In that case, as explicitly demonstrated in \cite[Ch. 1]{carmona2016lectures}, the rate of strong propagation of chaos deteriorates with the dimension $d$. This is due to the fact that one needs to estimate the difference between the empirical law of i.i.d. samples from $\mu$ and $\mu$ itself using results such as \cite{fournier2015rate} or \cite{dereich2013constructive}. In the special case when the diffusion coefficient is constant, with linear measure dependence on the drift (which lies in some negative Sobolev space), the rate of convergence in the total variation norm has been shown to be $O(1/\sqrt{N})$ in \cite{jabin2018quantitative}. Of course, in a strong setting, $O(1/\sqrt{N})$ is widely considered to be optimal as it corresponds to the size of stochastic fluctuations as predicted by the CLT. In this work, we are interested in weak quantitative estimates of propagation of chaos. Indeed, this new direction of research has been put forward very recently by two independent works \cite[Ch. 9]{kolokoltsov2010nonlinear} and \cite[Th. 2.1]{mischler2015new}. The authors presented novel weak estimates of propagation of chaos  for linear functions in measure, i.e. $\Phi(\mu):=\int_{\bR^d}F(x)\mu(dx)$ with $F:\bR^d \rightarrow \bR$ being smooth. This gives the rate of convergence $O(1/N)$, plus the error due to approximation of the functional of the initial law (see \cite[Lem. 4.6]{mischler2015new} for a discussion of a dimensional-dependent case). While the aim of \cite{mischler2015new} is to establish quantitative propagation of chaos for the Boltzmann's equation, in a spirit of Kac's programme \cite{kac1956foundations,mckean1967exponential}, Theorem 6.1 in \cite[Th. 6.1]{mischler2015new} specialises their result to McKV-SDEs studied here, but only for elliptic diffusion coefficients that do not depend on measure and symmetric Lipschitz drifts with linear measure dependence. The key idea behind both results is to work with the semigroup that acts on the space of functions of measure, sometimes called the lifted semigroup, which can be viewed a dual to the space of probability measures on  $\mathcal P(\bR^d)$ as presented in \cite{mischler2013kac}. A similar research programme, but in the context of mean-field games with a common noise, has been successfully undertaken in \cite{cardaliaguet2015master}. In this work, the authors study the master equation, which is a PDE driven by the Markov generator of a lifted semi-group. They show that existence of classical solution to that PDE is the key to obtain quantitative bounds between an $n$-player Nash system and its mean field limit. Indeed, perturbation analysis of the PDE on the space of measures leads to the weak error being of the order $O(1/N)$. 
 
 In this work we build on these observations, and identify minimal assumptions for the expansion in number of particles $N$ to hold. Next, we verify these assumptions for McKV-SDEs with a general drift and general (and possibly non-elliptic) diffusion coefficients. We also consider non-linear functionals of measure. The main theorem in this paper, Theorem \ref{eq:main result with regularity: final}, states that given sufficient regularity we have
\begin{equation*}
\esp{\Phi(\nlaw[T])} - \Phi (\law[X^{0, \xi}_T])
= \sum_{j=1}^{k-1} \frac{C_j}{N^j} + O(\frac1{N^k}), \label{eq: first expansion eq} 
\end{equation*}
where $C_1, \ldots, C_{k-1}$ are constants that do not depend on $N$.

As  mentioned above, the method of expansion relies heavily on the calculus on 
$(\mathcal{P}_2(\bR^d),W_2)$  and we follow the approach presented by P$.$ Lions in his course at Coll\`{e}ge de France \cite{lions2014cours} (redacted by Cardaliaguet \cite{cardaliaguet2010notes}). To obtain such an expansion, one needs to rely on some smoothness property for the solution of \eqref{eq:McKeanflow1} as it is always the case when one wants to gets error expansion for some approximating procedure (see \cite{talay1990expansion} for a similar expansion on the weak error expansion of SDE approximation with time-discretisation). The important object in our study, similarly to \cite{cardaliaguet2015master}, is the PDE written on the space $[0,T]\times  \cP_2(\R^d)$, which corresponds to the  lifted semigroup and comes from the It\^{o}'s formula of functionals of measures established in \cite{buckdahn2017mean} and \cite{chassagneux2014probabilistic}. Smoothness properties on the functions $\cV^{(m)}$ (see Definition \ref{de class D}) required for expansion \eqref{eq: first expansion eq} to hold are formulated in Theorem \ref{eq:mainresultwithoutregularity}. A natural question is then to identify some sufficient conditions on the SDE coefficients to guarantee the smoothness property of the functions $\cV^{(m)}$. We give one possible answer to this question in Theorem \ref{eq:main result with regularity: final}. In that theorem, we show that if the coefficients of the SDE are smooth, then the 
functions $\cV^{(m)}$ are also smooth enough provided that $\Phi$ is itself smooth. This result, which is expected, comes from an extension of Theorem 7.2 in \cite{buckdahn2017mean} (see Theorem \ref{eq:generalisationmainresult}). 
 
While in the current paper, we assume high order of smoothness of the coefficients of McKV-SDEs and $\Phi$, we anticipate this general approach to be valid under a less regular setting. Indeed, when working with a strictly elliptic setting {with some structural conditions}, the lifted semigroup may be smooth even in the case when drift and diffusion coefficients are irregular. This has been demonstrated in \cite{de2015strong,de2018well}. Similarly, $\Phi$ does not need to be smooth for the lifted semigroup to be differentiable in the measure direction. This has been shown using techniques of Malliavin calculus in \cite{crisan2017smoothing}. Finally, when the underlying equation has some special structure, the more classical approach can be deployed to study weak propagation of chaos property \cite{bencheikh2018bias}. The analysis of irregular cases goes beyond the scope of this paper. 

To sum up, there are three main contributions in this paper. Firstly, the main result (Theorem \ref{eq:main result with regularity: final}) allows us to use Romberg extrapolation to obtain an estimator of $X$ with weak error being in the order of $O(\frac{1}{N^k})$, for each $k \in \bN$. (See Section \ref{eq:rombergremark} for details.) Thus, effectively, a higher-order particle system (in terms of the weak error) can be constructed up to a desired order of approximation. Secondly, the analysis in this paper makes use of the notions of measure derivatives and linear functional derivatives by generalising them to an arbitrary order of differentiation. This is in line with the approach in \cite{crisan2017smoothing}. Some properties (e.g. Lemma \ref{le polynomial growth}) relate the regularity of the two notions of derivatives in measure and might be of an independent interest. In particular, the generalisation of  Theorem 7.2 in \cite{buckdahn2017mean} from second order derivatives in measure to higher order derivatives is proven to be useful in the analysis of McKean-Vlasov SDEs in general. Finally, as a by-product of the weak error expansion, a version of the law of large numbers in terms of functionals of measures is developed in Theorem  \ref{ theorem second order expansion}.

\subsection{Romberg extrapolation and ensembles of particles} \label{eq:rombergremark}
In this section we construct an ensemble particle system in the spirit of  Richardson's extrapolation method \cite{richardson1911ix} that has been studied in the context of time-discretisation of SDEs in \cite{talay1990expansion} and in the context of discretisation of SPDEs in \cite{gyongy2005accelerated}. 

 Let $F:\bR^d \rightarrow \bR$ be a Borel-measurable function and define $\Phi(\mu):=\int_{\bR^d}F(x)\mu(dx)$. Observe that
\[
\bE[\Phi(\nlaw[T])]=\bE \bigg[\frac{1}{N}\sum_{i=1}^{N}F(\cX_T^{i,N}) \bigg] = \bE[F(\cX_T^{1,N})]. 
\]
Hence, the weak error reads  $|\bE[F(X^{0, \xi}_T)] - \bE[F(\cX^{1,N}_T)]|$.  By the result of Theorem \ref{eq:main result with regularity: final}, we can apply the technique of Romberg extrapolation to construct an estimator which approximates $\bE[F(X^{0,\xi}_T)]$ such that the weak error is of the order of $O(1/N^k)$. More precisely, for $k=2$, since $C_1$ is independent of $N$,
$$ \bE F(\cX^{i,N}_T) -\bE[F(X^{0,\xi}_T)]   = \frac{C_1}{N} + O\bigg( \frac{1}{N^{2}} \bigg)$$ 
and
$$ \bE F(\cX^{i,2N}_T) -\bE[F(X^{0,\xi}_T)]   =\frac{C_1}{2N} + O\bigg( \frac{1}{N^{2}} \bigg).$$
Hence,
$$ \Big| \Big( 2 \bE F(\cX^{i,2N}_T) - \bE F(\cX^{i,N}_T) \Big) -  \bE[F(X^{0,\xi}_T)] \Big| = O \Big( \frac{1}{N^2} \Big). $$
For general $k$, we can use a similar method to show that 
$$ \bigg|  \sum_{m=1}^k \alpha_m \bE F(\cX^{i,mN}_T)   - \bE[F(X^{0,\xi}_T)] \bigg| = O \Big( \frac{1}{N^k} \Big), $$ 
where
$$ \alpha_m = (-1)^{k-m} \frac{m^k}{m! (k-m)!}, \quad 1 \leq m \leq k. $$ 

To motivate the study of the weak error expansion we will analyse an estimator that uses  $M$ ensembles of particles. Fix $M\geq 1$. The ensembles are indexed by $j$. For $j\in \{1,\ldots, M \}$, consider 
\begin{equation} \label{eq m particles}
    \cX^{(i,j),N}_t= \xi_{(i,j)} + \int_0^t  b \bigg( \cX^{(i,j),N}_r, \mathfrak{X}^{(j,N)}_r \bigg)  \,dr + \int_0^t  \sigma \bigg( \cX^{(i,j),N}_r,\mathfrak{X}^{(j,N)}_r  \bigg)    \,  dW^{(i,j)}_r, \quad 1 \leq i \leq N, 
\end{equation}
where $\{W^{i}:$ $1 \leq i \leq N\}_{1\leq j \leq M}$ are $M$ independent ensembles each consisting of $N$ $d$-dimensional Brownian motions; and $\{\xi_{(i,j)}: 1 \leq i \leq N\}_{1\leq j \leq M}$  are $M$ independent ensembles each consisting of $N$ i.i.d. random variables with the same distribution as $\xi$. We consider the following estimator 
\[
\frac{1}{M}\sum_{j=1}^M  \sum_{m=1}^k \alpha_m \frac{1}{mN} \sum_{i=1}^{mN}   F(\cX^{(i,j),mN}_T). 
\]
Next we analyse mean-square error\footnote{We look at the mean-square error for simplicity, but a similar computation could be done to verify the Lindeberg  condition and produce CLT with an appropriate scaling.} of this estimator 
\begin{align*}
& \bE \bigg[ \bigg( \bE[F(X_T)] - \frac{1}{M}\sum_{j=1}^M  \sum_{m=1}^k \alpha_m \frac{1}{mN} \sum_{i=1}^{mN}   F(\cX^{(i,j),mN}_T)  \bigg)^2 \bigg] \\  \leq 
&	2\bigg[ \bigg( \bE[F(X_T)]  - \sum_{m=1}^k \alpha_m \bE F(\cX^{1,mN}_T) \bigg)^2 \bigg]  \\
& +   2 \bE \bigg[ \bigg( \bE \bigg[ \sum_{m=1}^k \alpha_m \frac{1}{mN} \sum_{i=1}^{mN}F(\cX^{i,mN}_T) \bigg] - \frac{1}{M}\sum_{j=1}^M  \sum_{m=1}^k \alpha_m \frac{1}{mN} \sum_{i=1}^{mN}   F(\cX^{(i,j),mN}_T)    \bigg)^2 \bigg].     
\end{align*}
The first term on the right-hand side is studied in Theorem \ref{eq:main result with regularity: final} and, provided that the coefficients of \eqref{eq:McKeanflow1} are sufficiently smooth, it converges with order $\cO(N^{-2k})$. Control of the second term follows from the qualitative strong propagation of chaos. Indeed, we write 
\begin{align*}
 \mathbb{V}ar  \bigg[  \frac{1}{M}\sum_{j=1}^M  \sum_{m=1}^k \alpha_m \frac{1}{mN} \sum_{i=1}^{mN}   F(\cX^{(i,j),mN}_T)     \bigg]    
 \\ \leq 2  \mathbb{V}ar  \bigg[  \frac{1}{M}\sum_{j=1}^M  \sum_{m=1}^k \alpha_m \frac{1}{mN} \sum_{i=1}^{mN}   F(X^{(i,j)}_T)   \bigg]    
 &+ 2  \mathbb{V}ar  \bigg[  \frac{1}{M}\sum_{j=1}^M  \sum_{m=1}^k \alpha_m \frac{1}{mN} \sum_{i=1}^{mN}    \bigg( F(\cX^{(i,j),mN}_T) - F(X^{(i,j)}_T)    \bigg) \bigg],
\end{align*}
where $X^{(i,j)}$ denotes the solution of \eqref{eq:McKeanflow1} driven by $W^{i,j}$ with initial data $\xi^{i,j}$. Hence, independence implies that 
\begin{align*}
 \mathbb{V}ar  \bigg[  \frac{1}{M}\sum_{j=1}^M  \sum_{m=1}^k \alpha_m \frac{1}{mN} \sum_{i=1}^{mN}   F(X^{(i,j)}_T)   \bigg]    
\leq \frac{1}{M} \sum_{m=1}^k \alpha_m^2 \frac{1}{mN}  \mathbb{V}ar [ F(X^{(1,1)}_T)]. 
\end{align*}
On the other hand, 
\begin{eqnarray*}
&& \mathbb{V}ar  \bigg[  \frac{1}{M}\sum_{j=1}^M  \sum_{m=1}^k \alpha_m \frac{1}{mN} \sum_{i=1}^{mN}   F(\cX^{(i,j),mN}_T) - F(X^{(i,j)}_T)    \bigg] \nonumber \\
& \leq & \frac{k^2}{M} \sum_{m=1}^k \alpha_m^2 \bE \bigg[ \bigg|\frac{1}{mN} \sum_{i=1}^{mN}   F(\cX^{(i,j),mN}_T) - F(X^{(i,j)}_T)  \bigg|^2\bigg] \nonumber \\
&\leq  & \frac{k^2}{M} \sum_{m=1}^k \alpha_m^2 \frac{1}{mN} \sum_{i=1}^{mN} \bE \big[ \big|  F(\cX^{(i,j),mN}_T) - F(X^{(i,j)}_T)  \big|^2\big],
\end{eqnarray*}
where Jensen's inequality is used.  Using the fact $F$ is Lipschitz continuous and the result on a dimension-free bound for strong propagation of chaos, established in \cite{tse2018antithetic}, there exists a constant $C>0$ with no dependence on $N$ such that  
\[
\bE[ |  F(\cX^{(i,j),mN}_T) - F(X^{(i,j)}_T)  |^2] \leq  \frac{C}{mN}.
\]
Consequently, we have 
\begin{align*}
\bE \bigg[ \bigg( \bE[F(X_T)] - \frac{1}{M}\sum_{j=1}^M  \sum_{m=1}^k \alpha_m \frac{1}{mN} \sum_{i=1}^{mN}   F(\cX^{(i,j),mN}_T)  \bigg)^2 \bigg] \leq C( N^{-2k} + \frac{1}{M} \sum_{m=1}^k \alpha_m^2 \frac{1}{mN}).
\end{align*}
Since there are $M$ ensembles corresponding to the estimator and each ensemble has $k$ sub-particle systems with $mN$ particles each, $m \in \{1, \ldots, k \}$, the total number of interactions 
is $\cC= M \sum_{m=1}^k{(m N)}^{2}$. When we take $N=\epsilon^{-1/k}$ and $M=\epsilon^{-2+1/k}$ the mean-square error is of the order $O(\epsilon^2)$ (since   $\sum_{m=1}^{k}\alpha_m^2 m^{-1}$ is a constant). The corresponding number of interactions $\cC$ is of the order $O(\epsilon^{-2-1/k})$. The message here is that as the smoothness increases, less interactions among particles are needed when approximating the law of McKean-Vlasov SDE \eqref{eq:McKeanflow1}. We would like to stress out again that the dimension of the system does not deteriorate the rate of convergence, in contrast to results presented in the literature \cite{carmona2017probabilistic,fournier2015rate,mischler2013kac}. It is instructive to compare the above computation with a usual mean-square analysis of a single particle system
\begin{eqnarray*}
&& \bE \bigg[ \bigg( \bE[F(X_T)] -  \frac{1}{N} \sum_{i=1}^{N}   F(\cX^{i,N}_T)  \bigg)^2 \bigg] \nonumber \\
& = & \bigg( \bE[F(X_T)] - \bE[F(\cX^{1,N}_T)]   \bigg)^2 
+ \bE \bigg[ \bigg( \bE[F(\cX^{1,N}_T)]  -  \frac{1}{N} \sum_{i=1}^{N}   F(\cX^{i,N}_T)  \bigg)^2 \bigg]. \nonumber 
\end{eqnarray*}
As above, invoking strong propagation of chaos, one can show that the second term is of order $O \Big( N^{-1} \Big)$. That means that there would be no gain to go beyond what we can obtain from the strong propagation of chaos analysis to control the first term. Taking $N=\epsilon^{-2}$ results in mean-square error being of the order $O(\epsilon^2)$ and number of interactions $\cC= N^2 = \epsilon^{-4}$. That clearly demonstrates that working with ensembles of particles leads to an improvement in quantitative properties of  propagation of chaos,  which is interesting on its own but can also be explored when simulating particle systems on the computer. 

\paragraph*{{Notations.}}
\begin{itemize}
\item The $2-$Wasserstein metric is defined by 
\begin{equation*}
W_2(\mu,\nu) := \left( \inf_{\pi \in \Pi(\mu,\nu)}  \int_{\bR^d\times \bR^d} |x-y|^2 \, \pi(dx,dy)	\right)^{\frac{1}{2}}\,,
\end{equation*}
where $\Pi(\mu,\nu)$ denotes the set of {\em couplings} between $\mu$ and $\nu$ i.e. all measures on $\mathscr{B}(\bR^d\times \bR^d)$ 
such that $\pi(B,\bR^d) = \mu(B)$ and $\pi(\bR^d, B) = \nu(B)$ 
for every $B \in \mathscr B(\bR^d)$.
\item Uniqueness in law of  \eqref{eq:McKeanflow1} implies that for any random variables  $\xi,\xi'$ such that $\law[\xi]=\law[\xi']=\mu$, we have
$\law[X^{s, \xi}_t]=\law[X^{s, \xi'}_t]$. Therefore,  we adopt the notation $X^{s, \mu}_t:=X^{s, \xi}_t$ if only the law of the process is concerned.
\item  When the total number $N$ of particles is clear from context, we will often simply write $\cX^i$ for $\cX^{i,N}$. 
\item For any $x, y \in \bR^d$, we denote their inner product by $xy$. Since different measure derivatives lie in different tensor product spaces, we use $| \cdot|$ to denote the Euclidean norm for \emph{any} tensor product space in the form $\bR^{d_1} \otimes \ldots \otimes \bR^{d_{\ell}}$. 
\item  The law of any random variable $Z$ is denoted by $\law[Z]$. For any function $f: \mathcal{P}_2(\bR^d) \to \bR$, its lift $\tilde{f}: L^2 ( \Omega,\mathcal{F}, \bP; \bR^d) \to \bR$ is defined by $\tilde{f}(\xi)= f(\law[{\xi}])$.
\item Also, $(\hat{\Omega},\hat{\cF},\hat{\bP})$ stands for a copy of $({\Omega},{\cF},{\bP})$, which is useful to represent the Lions' derivative of a function of a probability measure. Any random variable $\eta$ defined on $(\Omega,\cF,\bP)$ is represented by $\hat{\eta}$ as a pointwise copy on $(\hat{\Omega},\hat{\cF},\hat{\bP})$.
In the section on regularity, we shall introduce a sequence of copies of $(\Omega,\cF,\bP)$, denoted by $\{ (\Omega^{(n)},\cF^{(n)},\bP^{(n)}) \}_{n}$. As before, any random variable $\eta$ defined on $(\Omega,\cF,\bP)$ is represented by $\eta^{(n)}$ as a pointwise copy on $(\Omega^{(n)},\cF^{(n)},\bP^{(n)})$.
\item For $T>0$, we define the following subsets of $[0,T]^m$, $m\ge 1$:
\begin{equation*}
\Delta^{m}_T := \set{ (t_1,\dots,t_m) \in [0,T]^m \,|\, 0 < t_m <  t_{m-1} < \dots <  t_1 <   T}
\end{equation*}
and
\begin{equation*}
\overline{\Delta^{m}_T} := \set{ (t_1,\dots,t_m) \in [0,T]^m \,|\, 0 \leq  t_m \leq  t_{m-1} < \dots <  t_1 < T}.
\end{equation*}
We often denote $(t_1,\dots,t_m) = \mathbf{t}$ and $(t_1,\dots,t_{m-1}) = \tau$.  We shall also sometimes use the convention $\Delta^0_T :=: \overline{\Delta^0_T} := \emptyset$ for simplicity of notation.  
\item With the above definition, we  denote
\begin{align*}
\int_{\Delta^m_T} f(\mathbf{t}) \ud \mathbf{t} := \int_{0 < t_m < \dots <t_1 < T} f(t_1,\dots,t_m) \ud t_1 \dots \ud t_m.
\end{align*}
\item For any function $f:{\Delta}^{m}_T \rightarrow \R$, we always denote by 
$\partial_t f(t_1,\dots,t_m)$ the partial derivatives of $f$ in the variable $t_m$ at $(t_1,\dots,t_m)$ whenever they exist.
\item { $L^2$ denotes the set of square integrable random variables, $\cH^2$ the set of square-integrable progressively measurable processes $\theta$ such that 
$\left( \int_0^T |\theta_s|^2\right)^\frac12 \in L^2$.
}
\end{itemize}

 \section{Method of weak error expansion}


\subsection{Calculus on the space of measures}
Our method of proof is based on expansion of an auxiliary map satisfying a PDE on  the Wasserstein space. One of the most important tools of the paper is thus the theory of differentiation in measure. 

We make an intensive use of the so-called ``L-derivatives'' and ``linear functional derivatives'' that we recall now, following essentially \cite{cardaliaguet2015master}. We also introduce a higher-order version of this derivative as this is needed in the proofs of our expansion.

\subsubsection{Linear functional derivatives}\label{sec lfd}

A continuous function $\frac{\delta U}{\delta m}: \cP_2(\bR^d) \times \bR^d \to \bR$ is said to be the \emph{linear functional derivative} of $U: \cP_2(\bR^d) \to \bR$, if 
\begin{itemize}
\item for any bounded set $\cK \subset \cP_2(\R^d)$, $y \mapsto \frac{\delta U}{\delta m}(m,y)$ has at most quadratic growth in $y$ uniformly in $m \in \cK$,
\item for any $m, m' \in \cP_2(\bR^d)$,
 \begin{align}\label{eq de first order deriv}
 U(m')- U(m) = \int_0^1 \int_{\bR^d} \frac{\delta U}{\delta m}( (1-s)m + sm',y) \, (m'-m)(dy) \, ds. 
 \end{align}
\end{itemize}

\noindent For the purpose of our work, we need to introduce derivatives at any order $p\ge 1$. 


\begin{definition}
For any $p \ge 1$, the $p$-th order linear functional of the function $U$ is a continuous
function from $\frac{\delta^p U}{\delta m^p}: \cP_2(\R^d)\times (\R^d)^{p-1}\times \R^d \rightarrow \R$ satisfying
\begin{itemize}
\item for any bounded set $\cK \subset \cP_2(\R^d)$, $(y,y') \mapsto \frac{\delta^p U}{\delta m^p}(m,y,y')$ has at most quadratic growth in $(y,y')$ uniformly in $m \in \cK$,
\item for any $m, m' \in \cP_2(\bR^d)$,
\begin{align*}
 \frac{\delta^{p-1} U}{\delta m^{p-1}}(m',y) - \frac{\delta^{p-1} U}{\delta m^{p-1}}(m,y) = \int_0^1 \int_{\bR^{d}} \frac{\delta^p U}{\delta m^p}((1-s)m +sm',y,y') \, (m'-m)( \ud y') \,\ud s,
\end{align*}
provided that the $(p-1)$-th order derivative is well defined.
\end{itemize}
\end{definition}
{
\noindent The above derivatives are defined up to an additive constant via \eqref{eq de first order deriv}.
They are normalised by
\begin{equation}
   \frac{\delta^p U}{\delta m^p}(m,0) = 0\;.
    \label{eq: normalisation linear functional deriatives}
\end{equation} }
We make the following easy observation, which will be useful in the latest parts.
\begin{lemma}\label{le easy expansion}
If $U$ admits linear functional derivatives up to order $q$, then the following expansion holds
\begin{align*}
U(m') - U(m) =
\sum_{p=1}^{q-1}\frac1{p!} & \int_{\bR^{pd}} \frac{\delta^p U}{\delta m^p}(m,\mathbf{y}) \, \set{m'-m}^{\otimes p}( \ud \mathbf{y}) 
\\
&+ \frac1{(q-1)!}\int_0^1 (1-t)^{q-1} \int_{\bR^{qd}} \frac{\delta^q U}{\delta m^q}((1-t)m +tm',\mathbf{y}) \, \set{m'-m}^{\otimes q}( \ud \mathbf{y}) \,\ud t.
\end{align*}
\end{lemma}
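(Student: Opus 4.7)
The cleanest route is to reduce the statement to Taylor's theorem with integral remainder applied to the scalar function $\phi(t) := U((1-t)m + tm')$ on $[0,1]$. The plan is to show by induction on $p \leq q$ that $\phi$ is $p$-times differentiable on $[0,1]$ with
\begin{equation*}
\phi^{(p)}(t) \;=\; \int_{\bR^{pd}} \frac{\delta^p U}{\delta m^p}\bigl((1-t)m+tm',\mathbf{y}\bigr)\,\{m'-m\}^{\otimes p}(\ud \mathbf{y}),
\end{equation*}
and then to apply the classical identity $\phi(1) = \sum_{p=0}^{q-1} \phi^{(p)}(0)/p! + \frac{1}{(q-1)!}\int_0^1 (1-t)^{q-1}\phi^{(q)}(t)\,\ud t$.

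To establish the $p=1$ case, I would fix $t \in [0,1)$, set $m_t := (1-t)m+tm'$ and $m_{t+h} = m_t + h(m'-m)$, and apply the defining identity for $\delta U/\delta m$ to the pair $(m_t, m_{t+h})$, obtaining $\phi(t+h)-\phi(t) = h \int_0^1 \int \frac{\delta U}{\delta m}(m_t + sh(m'-m),y)(m'-m)(\ud y)\,\ud s$. Dividing by $h$ and letting $h \to 0$, continuity of $\delta U/\delta m$ together with the uniform quadratic growth on the bounded subset $\{m_t + sh(m'-m) : s,h \in [0,1]\} \subset \cP_2(\R^d)$ (which bounds the integrand by a fixed quadratic function of $y$ that is integrable against the finite signed measure $m'-m$) yields the claim via dominated convergence. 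The inductive step from $p-1$ to $p$ repeats the same argument, replacing $U$ by the measure-argument of $\delta^{p-1} U/\delta m^{p-1}(\cdot, \mathbf{y})$ and using the definition of the next-order linear functional derivative; Fubini allows one to bring the $\ud \mathbf{y}$-integration outside the limit, again justified by the uniform quadratic growth hypothesis in the definition.

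An alternative that avoids introducing $\phi$ is a direct induction on $q$. The $q=1$ case is the definition of $\delta U/\delta m$. Assuming the expansion at order $q-1$, substitute into its remainder the identity
\begin{equation*}
\frac{\delta^{q-1} U}{\delta m^{q-1}}\bigl((1-t)m+tm',\mathbf{y}\bigr) - \frac{\delta^{q-1} U}{\delta m^{q-1}}(m,\mathbf{y}) = \int_0^t \int_{\R^d} \frac{\delta^q U}{\delta m^q}\bigl((1-u)m+um',\mathbf{y},y'\bigr)(m'-m)(\ud y')\,\ud u,
\end{equation*}
which follows from the defining identity for $\delta^q U/\delta m^q$ after the linear change of variables $u = st$. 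The term coming from $\delta^{q-1}U/\delta m^{q-1}(m,\mathbf{y})$ integrates in $t$ to $\int_0^1 (1-t)^{q-2}\,\ud t = 1/(q-1)$, producing the missing $p=q-1$ summand, while the remaining double integral, after swapping $t$ and $u$ and computing $\int_u^1 (1-t)^{q-2}\,\ud t = (1-u)^{q-1}/(q-1)$, gives precisely the desired order-$q$ remainder.

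The only delicate point in either route is the interchange of limits/integrals needed to differentiate under the integral sign (in the $\phi$ approach) or to apply Fubini (in the direct induction). In both cases this is handled by the quadratic growth bound on $\delta^p U/\delta m^p$ uniformly on the bounded set $\{(1-s)m + sm' : s \in [0,1]\} \subset \cP_2(\R^d)$, which produces a fixed quadratic dominating function integrable against $\{m'-m\}^{\otimes p}$ since $m,m' \in \cP_2(\R^d)$. I expect no obstacle beyond this bookkeeping.
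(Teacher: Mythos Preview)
Your first route---defining the scalar function $\phi(t)=U((1-t)m+tm')$, proving by induction that $\phi^{(p)}(t)=\int \frac{\delta^p U}{\delta m^p}((1-t)m+tm',\mathbf{y})\,\{m'-m\}^{\otimes p}(\ud\mathbf{y})$ via the difference-quotient computation, and then invoking Taylor's formula with integral remainder---is exactly the paper's proof, with the added bonus that you spell out the dominated-convergence justification the paper leaves implicit. Your second route (direct induction on $q$ via the integration-by-parts identity $\int_u^1(1-t)^{q-2}\,\ud t=(1-u)^{q-1}/(q-1)$) is a correct alternative the paper does not pursue.
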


\begin{proof}
We define 
\begin{align}
[0,1] \ni t \mapsto f(t) = \Phia \big((1-t)m + t m' \big) =  \Phia \big(m + t (m'-m) \big) \in \R
\end{align}
and apply Taylor-Lagrange formula to $f$ up to order $q$, namely
\begin{align*}
f(1)-f(0) 
&= \sum_{p=1}^{q-1}\frac1{p!} f^{(p)}(0) + \frac1{(q-1)!} \int_0^1(1-t)^{(q-1)} f^{(q)}(t)\ud t.
\end{align*}
It remains to show that
\begin{equation}
  f^{(p)}(t)= \int_{\bR^{pd}} \frac{\delta^p U}{\delta m^p}(m + t (m'-m),\mathbf{y}) \, \set{m'-m}^{\otimes p}( \ud \mathbf{y}), \quad \quad \forall p \in \{0, \ldots, q\}.  \label{eq: pth order linear functional derivative}
\end{equation}
by induction. Since \eqref{eq: pth order linear functional derivative} holds trivially for $p=0$, we suppose that \eqref{eq: pth order linear functional derivative} holds for $p \in \{0, \ldots, q-1\}. $ Then
\begin{eqnarray}
 & & \frac{f^{(p)}(t+h)-f^{(p)}(t)}{h} \nonumber \\
  &= & \frac{1}{h} \bigg[ \int_{\bR^{pd}} \frac{\delta^p U}{\delta m^p}(m + (t+h) (m'-m),\mathbf{y}) \, \set{m'-m}^{\otimes p}( \ud \mathbf{y}) \nonumber \\
  && - \int_{\bR^{pd}} \frac{\delta^p U}{\delta m^p}(m + t (m'-m),\mathbf{y}) \, \set{m'-m}^{\otimes p}( \ud \mathbf{y}) \bigg] \nonumber \\
  & = & \int_{\bR^{pd}} \int_0^1 \int_{\bR^d} \frac{\delta^{p+1} U}{\delta m^{p+1}}(m + (t+sh) (m'-m),\mathbf{y},y') \, (m'-m)(dy') \,ds  \, \set{m'-m}^{\otimes p}( \ud \mathbf{y}). \nonumber  
\end{eqnarray}
Taking $h \to 0$ gives \eqref{eq: pth order linear functional derivative} for $p+1$. This completes the proof. 
\end{proof}

 
\subsubsection{L-derivatives}


The above notion of linear functional derivatives is not enough for our work. We shall need to consider further derivatives in the non-measure argument of the derivative function.

If the function $y \mapsto \frac{\delta U}{\delta m}(m,y)$ is of class $\cC^1$, we consider the \emph{intrinsic} derivative of $U$ that we denote
\begin{align*}
\partial_\mu U(m,y) := \partial_y \frac{\delta U}{\delta m}(m,y)\;.
\end{align*}
The notation is borrowed from the literature on mean field games and corresponds to the notion of ``L-derivative'' introduced by P.-L. Lions in his lectures at Coll?ge de France \cite{lions2014cours}. Traditionally, it is introduced by considering a lift on an $L^2$ space of the function $U$ and using the Fr?chet differentiability of this lift on this Hilbert space. The equivalence between the two notions is proved in \cite[Tome I, Chapter 5]{carmona2017probabilistic}, where the link with the notion of derivatives used in optimal transport theory is also made.

In this context, higher order derivatives are introduced by iterating the operator $\partial_\mu$ and the derivation in the non-measure arguments. Namely, at order $2$, one considers
\begin{align*}
\cP_2(\R^d)\times \R^d \ni   (m,y) \mapsto \partial_y \partial_\mu U(m,y) \text{ and } \
\cP_2(\R^d)\times \R^d\times \R^d \ni   (m,y,y') \mapsto  \partial^2_{\mu} U(m,y,y') \;.
\end{align*}

This leads in particular to the notion of a \emph{fully} $\cC^2$ function that will be of great interest for us (see \cite{chassagneux2014probabilistic}). 
\begin{definition}[Fully $\cC^2$]
A function $U : \cP_2(\R^d) \rightarrow \R$ is fully $\cC^2$ if the following mappings 
\begin{align*}
&\cP_2(\R^d)\times \R^d \ni (m,y) \mapsto \partial_\mu U(m,y)
 \\
 & \cP_2(\R^d)\times \R^d \ni (m,y) \mapsto \partial_y \partial_\mu U(m,y)
 \\
 &\cP_2(\R^d)\times \R^d\times \R^d \ni (m,y,y') \mapsto \partial^2_\mu U(m,y,y')
\end{align*}
are well-defined and continuous for the product topologies.
\end{definition}

Let us observe for later use that if the function $U$ is fully $\cC^2$  and moreover satisfies, for any compact subset $\cK \subset \cP_2(\R^d)$,
\begin{align*}
\sup_{m \in \cK} \int_{\R^d}\left\{|\partial_\mu U(m,y)|^2 +|\partial_y \partial_\mu U(m,y)|^2\right\}\ud m(y) < +\infty\,,
\end{align*}
then it follows from Theorem 3.3 in \cite{chassagneux2014probabilistic} that $U$ can be expanded along the flow of marginals of an It? process. Namely, let $\mu_t = \law[X_t]$ where
\begin{align*}
\ud X_t = b_t \ud t + \sigma_t \ud W_t, \quad X_0 \in L^2,
\end{align*}
with {$b \in \cH_2$ and  $a_t := \sigma_t \sigma_t' \in \cH_2 $,} then
\begin{align}\label{eq chain rule basic}
U(\mu_t) = U(\mu_0) + \int_0^t\esp{\partial_\mu U(\mu_s,X_s)b_s + \frac12 \mathrm{Tr}\set{ \partial_y \partial_\mu U(\mu_s,X_s)a_s } } \ud s.
\end{align}

In order to prove our expansion, we need to iterate the application of the previous chain rule and in order to proceed, we need to use higher order derivatives of the measure functional.

\noindent Inspired by the work \cite{crisan2017smoothing}, for any $k \in \bN$, we formally define  the  higher order derivatives in measures through the following iteration (provided that they actually exist): for any $k \geq 2$, $(i_1, \ldots, i_k) \in \{ 1, \ldots, d \}^k$ and $x_1, \ldots, x_k \in \bR^d$, the function $\partial^k_{\mu} f:\mathcal{P}_2(\bR^d) \times (\bR^d)^{ k} \to (\bR^d)^{\otimes k}$ is defined by
\begin{equation} \bigg( \partial^k_{\mu} f( \mu, x_1, \ldots, x_k) \bigg)_{(i_1, \ldots, i_k)} := \bigg( \pmu \bigg( \Big( \partial^{k-1}_{\mu} f( \cdot, x_{1}, \ldots, x_{k-1}) \Big)_{(i_1, \ldots, i_{k-1})} \bigg)(\mu,x_k) \bigg)_{i_k}, \label{eq:generalformulaintro} \end{equation} 
and its corresponding mixed derivatives in space $\partial^{\ell_k}_{v_k} \ldots \partial^{\ell_1}_{v_1} \partial^k_{\mu} f:\mathcal{P}_2(\bR^d) \times (\bR^d)^{ k} \to (\bR^d)^{\otimes (k+ \ell_1 +\ldots \ell_k)} $ are defined by
\begin{equation} \bigg( \partial^{\ell_k}_{v_k} \ldots \partial^{\ell_1}_{v_1} \partial^k_{\mu} f( \mu, x_1, \ldots, x_k) \bigg)_{(i_1, \ldots, i_k)} := \frac{\partial^{\ell_k}}{\partial x^{\ell_k}_k} \ldots \frac{\partial^{\ell_1}}{\partial x^{\ell_1}_1} \bigg[ \bigg( \partial^k_{\mu} f( \mu, x_1, \ldots, x_k) \bigg)_{(i_1, \ldots, i_k)} \bigg], \quad \ell_1 \ldots \ell_k \in \bN \cup \{0 \}. \label{eq:generalformulamixed} \end{equation}
Since this notation for higher order derivatives in measure is quite cumbersome, we introduce the following multi-index notation for brevity. This notation was first proposed in \cite{crisan2017smoothing}. 
\begin{definition}[Multi-index notation]
Let $ n, \ell$ be non-negative integers. Also, let $\bm{\beta}=(\beta_1, \ldots, \beta_n)$ be an $n$-dimensional vector of non-negative integers. Then we call any ordered tuple of the form $(n,\ell, \bm{\beta})$ or $(n,\bm{\beta})$ a \emph{multi-index}. For a function $f:\R^d \times\cP_2(\R^d) \mapsto \R$, the derivative $D^{(n,\ell, \bm{\beta})} f(x, \mu , v_1, \ldots, v_n)$  is defined as 
$$D^{(n,\ell, \bm{\beta})} f (x, \mu , v_1, \ldots, v_n) :=  \partial^{\beta_n}_{v_n} \ldots \partial^{\beta_1}_{v_1} \partial^{\ell}_x \pnmu f( x, \mu , v_1, \ldots, v_n)$$
if this derivative is well-defined.
For any function $\Phi: \mathcal{P}_2 ( \bR^d) \to \bR$, we define
$$D^{(n,\bm{\beta})} \Phi( \mu , v_1, \ldots, v_n) :=  \partial^{\beta_n}_{v_n} \ldots \partial^{\beta_1}_{v_1} \pnmu \Phi( \mu , v_1, \ldots, v_n), $$ 
if this derivative is well-defined. Finally, we also define the \emph{order} \footnote{ We do not consider `zeroth' order derivatives in our definition, i.e. at least one of $n$, $\beta_1, \ldots, \beta_n$ and $\ell$ must be non-zero, for every multi-index $\big(n, \ell, (\beta_1, \ldots, \beta_n) \big)$.} $ |(n,\ell, \bm{\beta})| $ (resp.  $|(n,\bm{\beta})|$ ) by
\begin{equation} |(n,\ell, \bm{\beta})|:= n+ \beta_1 + \ldots \beta_n + \ell , \quad \quad |(n,\bm{\beta})|:= n+ \beta_1 + \ldots \beta_n .  \label{eq:orderdef}
\end{equation}
\end{definition}

{
As for the first order case, we can establish the following relationship with linear functional derivatives, see e.g. \cite{cardaliaguet2015master} for the correspondence up to order 2,
\begin{equation}
   \partial^n_\mu U(\cdot) =  \partial_{y_n} \frac{\delta }{\delta m} \dots \partial_{y_1} \frac{\delta }{\delta m} U(\cdot) = \partial_{y_n}\dots \partial_{y_1}\frac{\delta^n }{\delta m^n} U(\cdot) \,, \label{eq: connection lions functional derivatives high order} 
\end{equation}
provided one of the two derivatives is well-defined.
}

Next, we deduce the following lemma that will be useful later on.
\begin{lemma} \label{le polynomial growth}
Let $p \ge 1$ and assume that $\partial^p_\mu U \in L^\infty$. Then
\begin{align*}
\bigg|\frac{\delta^p U}{\delta m^p} (m,y_1,\dots,y_p)\bigg| \le C_p(|y_1|^p + \dots + |y_p|^p)\;,
\end{align*} 
for some constant $C_p>0$.
\end{lemma}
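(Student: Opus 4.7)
My plan is to run a joint induction on $p$ that simultaneously propagates two statements: \textbf{(A)} the $p$-th linear functional derivative vanishes as soon as any one of its spatial slots is zero, i.e. $\frac{\delta^p U}{\delta m^p}(m,y_1,\ldots,y_p) = 0$ whenever some $y_i = 0$; and \textbf{(B)} the claimed polynomial estimate. Claim (A) is the nontrivial strengthening of the one-point normalisation \eqref{eq: normalisation linear functional deriatives} that unlocks a clean integral representation for $\frac{\delta^p U}{\delta m^p}$ in terms of $\partial^p_\mu U$.

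For the base case $p=1$, the identity $\partial_\mu U(m,y) = \partial_y \frac{\delta U}{\delta m}(m,y)$ and $\frac{\delta U}{\delta m}(m,0)=0$ give $\frac{\delta U}{\delta m}(m,y) = \int_0^1 \partial_\mu U(m,sy)\cdot y\,ds$, whence both (A) and (B) follow with $C_1 = \|\partial_\mu U\|_\infty$. For the inductive step, assuming (A) at order $p-1$, I would plug $y_j = 0$ (for an arbitrary $j \le p-1$) into the defining relation
\begin{align*}
\frac{\delta^{p-1} U}{\delta m^{p-1}}(m',y_1,\ldots,y_{p-1}) - \frac{\delta^{p-1} U}{\delta m^{p-1}}(m,y_1,\ldots,y_{p-1}) \\
= \int_0^1\!\!\int_{\bR^d} \frac{\delta^p U}{\delta m^p}\bigl((1-s)m+sm',y_1,\ldots,y_{p-1},y_p\bigr)\,(m'-m)(dy_p)\,ds.
\end{align*}
The inductive hypothesis kills the left-hand side, so the integral of $y_p \mapsto \frac{\delta^p U}{\delta m^p}(m,\ldots,0_{(j)},\ldots,y_p)$ against every signed measure of zero total mass vanishes; this forces the map to be constant in $y_p$, and \eqref{eq: normalisation linear functional deriatives} pins the constant to zero, establishing (A) at order $p$.

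Armed with (A), I invoke the higher-order relation $\partial^p_\mu U = \partial_{y_p}\cdots\partial_{y_1}\frac{\delta^p U}{\delta m^p}$ recorded in \eqref{eq: connection lions functional derivatives high order} and iteratively apply the fundamental theorem of calculus along each $y_i$-coordinate, starting from the vanishing boundary values. This produces
\begin{align*}
\frac{\delta^p U}{\delta m^p}(m,y_1,\ldots,y_p) = \int_{[0,1]^p} \partial^p_\mu U(m,s_1 y_1,\ldots,s_p y_p)\,[y_1,\ldots,y_p]\,ds_1\cdots ds_p,
\end{align*}
where $[\cdot,\ldots,\cdot]$ denotes the $p$-linear contraction. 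Taking absolute values gives the geometric bound $\|\partial^p_\mu U\|_\infty |y_1|\cdots|y_p|$, and AM--GM, $|y_1|\cdots|y_p| \le \frac{1}{p}(|y_1|^p + \cdots + |y_p|^p)$, upgrades this to (B) with $C_p = \|\partial^p_\mu U\|_\infty/p$.

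The main obstacle is closing (A): the normalisation alone only fixes one value of $\frac{\delta^p U}{\delta m^p}$, and without (A) the iterated fundamental-theorem-of-calculus representation simply fails. The crucial observation is that varying $m'$ freely over $\cP_2(\bR^d)$ supplies a sufficiently rich family of zero-mass signed measures to deduce that any function annihilated by all of them is constant; this, combined with the inductive hypothesis, is what promotes a pointwise normalisation to a full boundary vanishing property.
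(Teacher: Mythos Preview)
Your proof is correct and follows the paper's approach: both obtain the integral representation $\frac{\delta^p U}{\delta m^p}(m,y_1,\ldots,y_p) = \int_{[0,1]^p}\partial^p_\mu U(m, t_1 y_1,\ldots,t_p y_p)\,[y_1,\ldots,y_p]\,dt$ via iterated fundamental theorem of calculus, then bound by $\|\partial^p_\mu U\|_\infty$ and Young's (AM--GM) inequality. The only cosmetic difference is how the boundary terms are killed---the paper reads the vanishing directly from the normalisation at each successive level via the identity \eqref{eq: connection lions functional derivatives high order}, whereas you repackage this as your property (A) and prove it by induction on the functional-derivative definition; these are equivalent, though note that your passage from the defining integral identity to ``vanishes against every zero-mass measure at the fixed point $m$'' tacitly uses a limit $m'\to m$ (continuity of $\frac{\delta^p U}{\delta m^p}$ in the measure argument), which is routine.
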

\begin{proof} We sketch the proof by induction in dimension one, for ease of notation.
Let $p\ge 1$. 
\\
First, we compute that
\begin{align*}
\frac{\delta^p U}{\delta m^p}(m,y_1,\dots,y_p) = 
\frac{\delta^p U}{\delta m^p}(m,y_1,\dots,y_{p-1},0) + \int_0^1 
\partial_{t_p} \bigg[ \frac{\delta^p U}{\delta m^p}(m,y_1,\dots,t_p y_p) \bigg] \ud t_p \;.
\end{align*}
Let $\partial_{x_p}$ denote the derivative w.r.t. the $p$th component of the spatial variables.  From the convention of normalisation \eqref{eq: normalisation linear functional deriatives}, we simply obtain that
\begin{align*}
\frac{\delta^p U}{\delta m^p}(m,y_1,\dots,y_p) = y_p \int_0^1 
\partial_{x_p} \bigg[ \frac{\delta^p U}{\delta m^p}(m,y_1,\dots,t_p y_p) \bigg] \ud t_p \;.
\end{align*}
Let $k < p$ and assume that
\begin{align}
&\frac{\delta^p U}{\delta m^p}(m,y_1,\dots,y_p) =  \nonumber 
\\
&
y_{p-k} \dots  y_p\int_{[0,1]^{k+1}}
\partial_{x_{p-k}}\dots\partial_{x_p}\bigg[ \frac{\delta^p U}{\delta m^p}(m,y_1,\dots,y_{p-k-1},t_{p-k} y_{p-k},\dots,t_p y_p) \bigg] \; \ud t_{p-k}  \dots  \ud t_{p} . \label{eq: normalisation induction proof}
\end{align}
Then, observing that
\begin{align*}
\partial_{x_{p-k}}\dots\partial_{x_p}\bigg[ \frac{\delta^p U}{\delta m^p} (m,y_1,\dots,y_{p-k-2},0,t_{p-k} y_{p-k},\dots,t_p y_p) \bigg]= 0,
\end{align*}
 we recover 
\begin{align*}
&\frac{\delta^p U}{\delta m^p}(m,y_1,\dots,y_p) =  
\\
&
y_{p-k-1} \ldots  y_p\int_{[0,1]^{k+2}} 
\partial_{x_{p-k-1}}\dots\partial_{x_p} \bigg[ \frac{\delta^p U}{\delta m^p}(m,y_1,\dots,y_{p-k-2},t_{p-k-1} y_{p-k-1},\dots,t_p y_p) \bigg] \ud t_{p-k-1} \dots \ud t_p .
\end{align*}
Setting $k=p-1$ in \eqref{eq: normalisation induction proof}, we then obtain 
\[
\frac{\delta^p U}{\delta m^p}(m,y_1,\dots,y_p) =  
y_{1}  \dots  y_p\int_{[0,1]^{p}}
\partial^p_{\mu} U(m,t_1y_1,\dots,t_p y_p) \, \ud t_{1} \dots \ud t_p .
\]
The proof is concluded by invoking the boundedness assumption of $\partial^p_{\mu} U$ along with Young's inequality. 

\end{proof}

\subsection{Weak error expansion along dynamics}
To state our expansion for the dynamic case, we will need some notion of smoothness given in the following definition.
\begin{definition} \label{de class D} Let $m$ be a positive integer.
A function $U:\overline{\Delta^m_T} \times \cP_2(\R^d) \rightarrow \R$ is of class $\cD(\Delta^m_T)$ if the following conditions hold:
\begin{enumerate}[i)]
\item \textcolor{black}{$U$ is jointly continuous on $\Delta^m_T \times  \cP_2(\R^d)$}.
\item For all $\mathbf{t} \in \Delta^m_T$, $U(\mathbf{t},\cdot)$ is \emph{fully} $C^2$.
\item Let $L$ be a positive constant. For all $\mathbf{t} \in \Delta^m_T$ and $\xi \in L^2(\R^d)$,
\begin{align*}
\esp{|\partial_\mu U(\mathbf{t},\law[\xi])(\xi)|^2 +|\partial_\upsilon \partial_\mu U(\mathbf{t},\law[\xi])(\xi)|^2
+ |\partial^2_\mu U(\mathbf{t},\law[\xi])(\xi,\xi)|^2}
\le
L.
\end{align*}
\item \begin{itemize}
\item $m=1$: $s \mapsto U(s,\mu)$ is continuously differentiable on $(0,T)$.
\item $m>1$: for all $(\tau_1,\dots,\tau_{m-1}) \in \Delta^{m-1}_T$ and all $\mu \in \cP_2(\R^d)$, the function
$$ (0,\tau_{m-1}) \ni s \mapsto U((\tau_1,\dots,\tau_{m-1},s),\mu) \in \R$$
is continuously differentiable on $(0,\tau_{m-1})$.
\end{itemize}
\item The functions 
\begin{align*}
&\Delta^m_T  \times L^2(\R^d) \ni (\mathbf{t},\xi) \mapsto \partial_t U(\mathbf{t},\law[\xi])(\xi) \in L^2(\R^d)
\\
&\Delta^m_T  \times L^2(\R^d) \ni (\mathbf{t},\xi) \mapsto \partial_\mu U(\mathbf{t},\law[\xi])(\xi) \in L^2(\R^d)
\\
&\Delta^m_T  \times L^2(\R^d) \ni (\mathbf{t},\xi) \mapsto \partial_\upsilon \partial_\mu U(\mathbf{t},\law[\xi])(\xi) \in L^2(\R^{d\times d})
\\
&\Delta^m_T \times L^2(\R^d) \ni (\mathbf{t},\xi) \mapsto  \partial^2_\mu U(\mathbf{t},\law[\xi])(\xi,\xi) \in L^2(\R^{d\times d})
\end{align*}
are continuous.

\vspace{5mm}

\end{enumerate}
\end{definition}

We define recursively the functions $\Phi^{(m)}$, $\cV^{(m)}$, $1 \le m \le k$, that are used to prove the expansion.
\begin{definition}\label{de expansion function}
\begin{enumerate}[i)]
\item For $m=1$, we set $\Phi^{(0)}= \Phi$ 
and define
$\cV^{(1)}:[0,T]\times \cP_2(\R^d) \rightarrow \R$ by
$$\cV^{(1)}(t,\mu) := \cV (t,\mu) =  \Phi^{(0)}(\law[X^{t,\mu}_T]) = \Phi(\law[X^{t,\mu}_T])\;.$$ 
Assuming that $\cV^{(1)}$ belongs to the class $\cD(\Delta^1_T)$,
we set $\Phi^{(1)}:(0,T)\times \cP_2(\R^d) \rightarrow \R$ as
\begin{align}\label{eq de Phi 1}
\Phi^{(1)}(t,\mu) &:= \int_{\bR^d} \Tr{ \partial^2_{\mu} {\cV}^{(1)} (t, \mu) (x,x)  a(x, \mu)   } \, \mu(dx).
\end{align}
\item For $1 < m \le k$, we define
$\cV^{(m)}:\overline{\Delta^m_T} \times \cP_2(\R^d) \rightarrow \R$ by
\begin{align*}
\cV^{(m)}((\tau,t),\mu) := \Phi^{(m-1)}(\tau,\law[X^{t,\mu}_{\tau_{m-1}}]) , \quad \quad \tau \in {\Delta^{m-1}_T}.
\end{align*}
Assuming that $ {\cV}^{(m)}$ belongs to the class $\cD(\Delta^m_T)$,
we set $\Phi^{(m)}:\Delta^m_T \times \cP_2(\R^d) \rightarrow \R$ as
\begin{align*}
\Phi^{(m)}(\bm{t},\mu) &:= \int_{\bR^d} \Tr{ \partial^2_{\mu} {\cV}^{(m)} (\bm{t}, \mu) (x,x)  a(x, \mu)   } \, \mu(dx).
\end{align*}
\end{enumerate}
\end{definition}



A key point in our work is to show that the previous definition is licit under some assumptions on the coefficient functions $b,\sigma$ and $\Phi$ (Theorem \ref{eq:main result with regularity} and Theorem \ref{eq:main result with regularity: final}). 

Before we proceed we state the following assumptions
\begin{enumerate}[label=\textnormal{(\arabic*)}]
    \item[\mylabel{eq:Lip}{(Lip)}]   
      $b$ and  $\sigma$  are Lipschitz continuous with respect to the Euclidean norm  and the  $W_2$  norm.  
\end{enumerate} 
\begin{enumerate}[label=\textnormal{(\arabic*)}]  
      \item[\mylabel{eq:UB}{(UB)}]  There exists $L>0$ such that $| \sigma (x, \mu) | \leq L$, for every $x \in \bR^d $ and $\mu \in \cP_2(\bR^d)$.
\end{enumerate}
It will become apparent from the proofs that when working only with \ref{eq:Lip}, higher order integrability conditions would need to be stated in Definition 
 \eqref{de class D}. We refrain from this extension and assume \ref{eq:UB} to improve readability of the paper, but encourage a curious reader to perform this simple extension.    

We begin with the following technical lemma.
\begin{lemma} \label{le toolbox expansion} Assume \ref{eq:Lip} and  \ref{eq:UB}. Let $m$ be a positive integer and $f: \cP_2(\R^d) \rightarrow \R$ be a continuous function.
Consider $U:\overline{\Delta^m_T} \times \cP_2(\R^d) \rightarrow \R$ given by $U((\tau,t),\mu) := f([X^{t,\mu}_{\tau_{m-1}}])$  \color{black} and set $U_{\tau} (t, \mu) = U((\tau,t),\mu)$, where $ \tau \in {\Delta^{m-1}_T}$. If $U$ is of class $\cD(\Delta^m_T)$, then the following statements hold:
\begin{enumerate}[(i)]
\item $U_\tau$ satisfies on $(0,\tau_{m-1})\times \cP_2(\R^d)$ the following PDE 
\begin{equation} \label{eq pde measure}
      \partial_s U_{\tau}(s, \mu) + \int_{\bR^d}\big[ \partial_{\mu} U_{\tau}(s, \mu) (y) b( y, \mu)  + \frac{1}{2} \text{Tr} \big( \partial_v \partial_{\mu} U_{\tau}( s,\mu) ( y) a( y, \mu) \big) \big] \mu(dy)=0,  \\
\end{equation} with terminal condition $U_\tau(\tau_{m-1},\cdot)=f(\tau,\cdot)$, 
where $a=(a_{i,k})_{1 \leq i,k \leq  d }: \bR^d \times \mathcal{P}_2(\bR^d) \to \bR^d \otimes \bR^d$ denotes the diffusion operator
$$ a_{i,k} (x,\mu) := \sum_{j=1}^q  \sigma_{i,j} (x,\mu) \sigma_{k,j} (x,\mu), \quad \quad \forall x \in \bR^d, \quad \forall \mu \in \mathcal{P}_2(\bR^d). $$ 
\item $U_\tau$ can be expanded along the flow of random measure associated to the particle system \eqref{eq particles} as follows, for all $0\le t \le \tau_{m-1}$,
\begin{align}\label{eq empirical expansion}
U_\tau(t,\nlaw[t])= U_\tau(0,\nlaw[0])
+ 
\frac1{2N}\int_0^t  \int_{\R^d} 
\mathrm{Tr}\!\left[a(\upsilon,\nlaw[s] ) \partial^2_{\mu} {U_\tau}(s,\nlaw[s])(\upsilon,\upsilon) \right]
\, \nlaw[s]\!(d\upsilon) \, \ud s + M^N_t\;,
\end{align}
where $M^N$ is a square integrable martingale with $M^N_0 = 0$.
\end{enumerate}
\end{lemma}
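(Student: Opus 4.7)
Both parts are applications of the chain rule \eqref{eq chain rule basic} on the Wasserstein space, combined with the flow property of \eqref{eq:McKeanflow1}. For (i) I differentiate the flow identity in time; for (ii) I lift $U_\tau$ to a symmetric function on $(\R^d)^N$ and apply the finite-dimensional It\^{o} formula.

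For part (i), strong uniqueness under \ref{eq:Lip} gives the flow identity $\law[X^{s,\mu}_{\tau_{m-1}}] = \law[X^{t,\mu_t}_{\tau_{m-1}}]$ valid for $s\le t \le \tau_{m-1}$, where $\mu_t := \law[X^{s,\mu}_t]$. Composing with $f$ rewrites this as $U_\tau(s,\mu) = U_\tau(t,\mu_t)$, so the right-hand side is constant in $t$. Because $U_\tau \in \cD(\Delta^m_T)$, the integrability and continuity conditions of Definition \ref{de class D} together with \ref{eq:UB} put us in the setting of the chain rule \eqref{eq chain rule basic} along the It\^{o} flow $\mu_t$; the only modification is the additional $\partial_t U_\tau(t,\mu_t)$ term coming from the time dependence of $U_\tau$. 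Since the right-hand side of the flow identity has zero $t$-derivative, evaluating at $t=s$ delivers \eqref{eq pde measure} at $(s,\mu)$. The parameters $s,\mu$ are arbitrary, so the PDE holds on $(0,\tau_{m-1})\times \cP_2(\R^d)$; the terminal condition is immediate.

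For part (ii), I lift $U_\tau$ by setting $u(t,x_1,\dots,x_N) := U_\tau(t,\tfrac1N\sum_{j=1}^N \delta_{x_j})$, and use the standard identities
$$\partial_{x_j} u = \tfrac1N\, \partial_\mu U_\tau(t,\nlaw[t])(x_j),\qquad \partial_{x_i}\partial_{x_j} u = \tfrac1{N^2}\, \partial^2_\mu U_\tau(t,\nlaw[t])(x_i,x_j)\quad (i\neq j),$$
$$\partial^2_{x_j} u = \tfrac1N\, \partial_v \partial_\mu U_\tau(t,\nlaw[t])(x_j) + \tfrac1{N^2}\, \partial^2_\mu U_\tau(t,\nlaw[t])(x_j,x_j).$$
Applying the classical It\^{o} formula to $u(t,\cX^{1,N}_t,\dots,\cX^{N,N}_t)$ and using independence of the driving Brownian motions $W^i$ to eliminate the off-diagonal quadratic covariations, the finite variation part of $dU_\tau(t,\nlaw[t])$ decomposes into two pieces. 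The first collects $\partial_t U_\tau$, the $\tfrac1N$-sum of $\partial_\mu U_\tau(\cX^j)\,b(\cX^j,\nlaw[t])$ terms, and the $\tfrac1{2N}$-sum of traces involving $\partial_v\partial_\mu U_\tau(\cX^j)\,a(\cX^j,\nlaw[t])$; after rewriting the averages as integrals against $\nlaw[t]$, this coincides with the PDE operator from part (i) and hence vanishes. The second piece is the $\tfrac1{2N^2}$-sum of traces involving $\partial^2_\mu U_\tau(\cX^j,\cX^j)\,a(\cX^j,\nlaw[t])$, which after the same rewriting gives exactly the $\tfrac1{2N}$-integral appearing in \eqref{eq empirical expansion}. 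The remaining stochastic integrals against the $W^i$ are grouped into $M^N$; the square integrability of $M^N$ on $[0,T]$ follows from \ref{eq:UB} and the $L^2$-bound $\esp{|\partial_\mu U_\tau(s,\law[\xi])(\xi)|^2}\le L$ of Definition \ref{de class D}(iii), applied with $\xi = \cX^{j,N}_s$.

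The main technical point is to verify that the integrability and continuity assumptions built into $\cD(\Delta^m_T)$ are strong enough to license both the Wasserstein chain rule along the It\^{o} flow $\mu_t$ in (i) and the classical It\^{o} formula applied to the lifted function $u$ in (ii). Definition \ref{de class D} has been framed precisely so that this is the case, so no essentially new difficulty arises beyond careful bookkeeping.
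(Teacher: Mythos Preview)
Your proposal is correct and follows essentially the same route as the paper. For (i) the paper also exploits the flow property to obtain that $s\mapsto U_\tau(s,\law[X^{0,\xi}_s])$ is constant and then applies the time–measure chain rule, while for (ii) it likewise introduces the finite-dimensional projection $u(t,x_1,\dots,x_N)=U_\tau(t,\frac1N\sum\delta_{x_i})$, invokes the derivative identities you wrote (citing \cite{chassagneux2014probabilistic}, Proposition~3.1), applies the classical It\^o formula, uses part (i) to kill the PDE term, and bounds the martingale via \ref{eq:UB} and Definition~\ref{de class D}(iii). One small point of phrasing: when invoking Definition~\ref{de class D}(iii) for the martingale bound, it is cleaner to freeze $\omega$ and apply the hypothesis to the deterministic measure $\nlaw[s](\omega)$ (so that the $L^2$ integral becomes $\int_{\R^d}|\partial_\mu U_\tau(s,\nlaw[s])(\upsilon)|^2\,\nlaw[s](d\upsilon)\le L$), rather than literally ``$\xi=\cX^{j,N}_s$'', since $\law[\cX^{j,N}_s]\neq\nlaw[s]$; but your intended argument is the correct one.
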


\begin{proof} (i) By the flow property, we observe that the function $[0,\tau_{m-1}) \ni s \mapsto U_\tau(s,\law[X^{0,\xi}_s]) \in \R$ is constant. Indeed, 
$U_\tau(s,\law[X^{0,\xi}_s]) = f(\law[X^{s,X^{0,\xi}_s}_\tau]) = f(\law[X^{0,\xi}_\tau]).$ Applying the chain rule in both time and measure arguments between $t$ and $t+h$, we get
\begin{eqnarray}
0 & = &  \int_0^h  \partial_tU_\tau(s,\law[X^{0,\xi}_s]) + \esp{\partial_{\mu} {U}_\tau (s,\law[X^{0,\xi}_s])( X^{0,\xi}_s )b( X^{0,\xi}_s, \law[X^{0,\xi}_s])}  \nonumber \\
&& +\frac12 \Tr{a(X^{0,\xi}_s,\law[X^{0,\xi}_s] ) \partial_{\upsilon} \partial_{\mu} {U}_\tau(s,\law[X^{0,\xi}_s])(X^{0,\xi}_s)}   \, ds. \nonumber 
\end{eqnarray}
Dividing by $h$ and letting $h \rightarrow 0$ allows to recover the first claim.
\\
(ii) To recover the expansion, we use the known strategy of considering finite dimensional projection of $U$. Namely, for a fixed number of particles $N$, we define
\begin{align*}
u(t,x_1,\dots,x_N) := U_\tau(t, \frac1N \sum_{i=1}^N \delta_{x_i})\,.
\end{align*}
From Definition \ref{de class D}(ii), (iv) and (v), we have that $u$ is $C^{1,2}([0,\tau)\times(\R^d)^n)$ (see Proposition 3.1 in \cite{chassagneux2014probabilistic}. Recalling the link between the derivatives of $u$ and $U$ (again see Proposition 3.1 in \cite{chassagneux2014probabilistic}), we can apply the classical Ito's formula to 
 $t \mapsto U_\tau(t,\nlaw[t]) = u(t,\cX^1_t,\dots,\cX^N_t)$ to get
 \begin{align} 
&U_\tau(t,\nlaw[t])  = U(0,\nlaw[0]) +  
\frac1N  \sum_{i=0}^N \int_0^t \partial_{\mu} U_\tau(s,\nlaw[s])(\cX^i_s) \sigma(\cX^i_s,\nlaw[s]) \ud W^i_s 
=:M^N_t
\label{eq martingale term}
\\
&+ \!\! \int_0^t \!\! \left(\partial_t U(s, \nlaw[s]) + \int_{\R^d}  \!\! \left \{ \partial_{\mu} {U}_\tau (s, \nlaw[s] )( \upsilon ) b  (\upsilon, \nlaw[s]  ) 
+\frac12 \Tr{a(\upsilon,\nlaw[s] ) \partial_{\upsilon} \partial_{\mu} {U}_\tau(s,\nlaw[s])(\upsilon)}
\right \}
\, \nlaw[s]\!(d\upsilon) \right ) \ud s \label{eq pde term}
\\
& + \frac1{2N} \int_0^t  \int_{\R^d} 
\Tr{a(\upsilon,\nlaw[s] ) \partial^2_{\mu} {U}_\tau(s,\nlaw[s])(\upsilon,\upsilon) }
\, \nlaw[s]\!(d\upsilon) \, \ud s \,. \label{eq remaining term}
\end{align}
We first note that the term in \eqref{eq pde term} is precisely \eqref{eq pde measure} evaluated at $(s,\nlaw[s])$ and  is thus equal to zero. We now study the local martingale term $M^N$ in \eqref{eq martingale term}. We simply compute
\begin{align*}
\esp{|M^N_t|^2} &= \frac1{N^2}\sum_{i=1}^N \int_0^t \esp{|\partial_\mu U_\tau(s,\nlaw[s])(\cX^i_s) \sigma(\cX^i_s,\nlaw[s])|^2} \ud s \;
\\
& \le \frac{C}{N} \int_0^t \esp{\int_{\R^d} |\partial_{\mu} {U}_\tau (s, \nlaw[s] )( \upsilon )|^2 \, \nlaw[s] (d \upsilon) } \ud s
\;,
\end{align*}
where we have used \ref{eq:UB}.
Using Definition \ref{de class D}(iii), we have 
\begin{align*}
\int_{\R^d} |\partial_{\mu} {U}_\tau (s, \nlaw[s] )( \upsilon )|^2 \, \nlaw[s](d \upsilon) \le C,
\end{align*}
which concludes that $M^N$ is a square integrable martingale.

\end{proof}

  \begin{theorem}[Weak error expansion: dynamic case] \label{eq:mainresultwithoutregularity}
Assume \ref{eq:Lip} and  \ref{eq:UB}. Suppose that Definition \ref{de expansion function} is well-posed for $m \in \{1, \ldots, k \}$.
Then the weak error in the particle approximation can be expressed as

\begin{equation}
\esp{\Phi(\nlaw[T])} - \Phi (\law[X^{0, \xi}_T])
= \sum_{j=0}^{k-1} \frac1{N^j}\left(C_j +  \cI^N_{j+1} \right) + O(\frac1{N^k}), \label{eq: first expansion eq} 
\end{equation}
where $C_0 := 0 \, $  and 
\begin{align*}
C_m := \int_{\Delta^m_T}\Phi^{(m)}(\mathbf{t},\law[X^{0,\xi}_{t_m}]) \ud \mathbf{t}\;, \quad \quad m \in \{1, \ldots, k-1 \},
\end{align*}
and $\cI^N_1 :=  \esp{ {\cV}(0,\nlaw[0]) - {\cV}(0,\law[\xi])} \,$ and
\begin{align*}
\cI^N_{m} := \int_{\Delta^{m-1}_T}\left(\esp{ \cV^{(m)}((\tau,0),\nlaw[0])} - \cV^{(m)}((\tau,0),\law[\xi]) \right) \ud \tau, \; \quad \quad m \in \{2, \ldots, k \}.
\end{align*}

\end{theorem}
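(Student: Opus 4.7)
The plan is to iterate Lemma \ref{le toolbox expansion} exactly $k$ times. Each application produces one additional power of $1/N$ through the $\partial^2_\mu$ correction term in \eqref{eq empirical expansion}, and the maps $\cV^{(m)},\Phi^{(m)}$ of Definition \ref{de expansion function} are precisely the objects generated by this recursion.

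\textbf{Base step.} The flow property, combined with $\cV^{(1)}(t,\mu) = \Phi(\law[X^{t,\mu}_T])$, gives $\cV^{(1)}(T,\nlaw[T]) = \Phi(\nlaw[T])$ and $\cV^{(1)}(0,\law[\xi]) = \Phi(\law[X^{0,\xi}_T])$. Applying Lemma \ref{le toolbox expansion}(ii) to $\cV^{(1)}$ on $[0,T]$ and then taking expectations (so the martingale $M^N$ vanishes) yields
\begin{align*}
\esp{\Phi(\nlaw[T])} - \Phi(\law[X^{0,\xi}_T]) \;=\; \cI^N_1 \;+\; \frac{1}{2N}\int_0^T \esp{\Phi^{(1)}(s,\nlaw[s])}\,\ud s.
\end{align*}

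\textbf{Inductive step.} Suppose by induction we have reached an identity of the form
\begin{align*}
\esp{\Phi(\nlaw[T])} - \Phi(\law[X^{0,\xi}_T]) \;=\; \sum_{j=0}^{m-1}\frac{1}{N^j}(C_j + \cI^N_{j+1}) \;+\; \frac{c_m}{N^m}\int_{\Delta^m_T}\esp{\Phi^{(m)}(\mathbf{t},\nlaw[t_m])}\,\ud\mathbf{t}
\end{align*}
for some absolute constant $c_m$ (absorbing the It\^{o} factors $\tfrac12$ accumulated so far). To pass to order $m+1$, I would fix $\tau = (\tau_1,\ldots,\tau_m) \in \Delta^m_T$ and apply Lemma \ref{le toolbox expansion}(ii) to the section $(t,\mu)\mapsto\cV^{(m+1)}((\tau,t),\mu) = \Phi^{(m)}(\tau,\law[X^{t,\mu}_{\tau_m}])$, with $f = \Phi^{(m)}(\tau,\cdot)$, on the interval $[0,\tau_m]$. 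Using the flow identities $\cV^{(m+1)}((\tau,\tau_m),\nlaw[\tau_m]) = \Phi^{(m)}(\tau,\nlaw[\tau_m])$ and $\cV^{(m+1)}((\tau,0),\law[\xi]) = \Phi^{(m)}(\tau,\law[X^{0,\xi}_{\tau_m}])$, then taking expectation and integrating $\tau$ over $\Delta^m_T$, one obtains the splitting
\begin{align*}
\int_{\Delta^m_T}\esp{\Phi^{(m)}(\tau,\nlaw[\tau_m])}\,\ud\tau \;=\; C_m + \cI^N_{m+1} + \frac{1}{2N}\int_{\Delta^{m+1}_T}\esp{\Phi^{(m+1)}(\mathbf{t},\nlaw[t_{m+1}])}\,\ud\mathbf{t},
\end{align*}
where the final integral arises by Fubini through $\int_{\Delta^m_T}\int_0^{\tau_m}\ud s\,\ud\tau = \int_{\Delta^{m+1}_T}\ud\mathbf{t}$. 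Substituting back advances the identity to order $m+1$.

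\textbf{Residual estimate and main obstacle.} After $k$ iterations the remainder has the form $c_k N^{-k}\int_{\Delta^k_T}\esp{\Phi^{(k)}(\mathbf{t},\nlaw[t_k])}\,\ud\mathbf{t}$. Combining $|a(x,\mu)|\le L^2$ from \ref{eq:UB} with the uniform $L^2$-bound on $\partial^2_\mu \cV^{(k)}(\mathbf{t},\law[\xi])(\xi,\xi)$ supplied by Definition \ref{de class D}(iii) applied to $\cV^{(k)}$ delivers a uniform bound for $\Phi^{(k)}$ on $\Delta^k_T \times \cP_2(\R^d)$; together with the simplex volume $T^k/k!$ this yields the desired $O(1/N^k)$. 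The main technical work lies in the iteration itself: at each stage one has to verify that the $\tau$-section $(t,\mu)\mapsto\cV^{(m+1)}((\tau,t),\mu)$ inherits from $\cV^{(m+1)}\in\cD(\Delta^{m+1}_T)$ the regularity required by Lemma \ref{le toolbox expansion}, which follows from the joint continuity hypotheses in Definition \ref{de class D}(i) and (v); these same hypotheses must also be used to secure sufficient measurability in $\tau$ after taking expectations so that Fubini can legitimately merge $\int_{\Delta^m_T}\int_0^{\tau_m}$ into $\int_{\Delta^{m+1}_T}$.
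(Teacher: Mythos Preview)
Your approach is exactly the paper's: iterate Lemma \ref{le toolbox expansion}(ii), peel off $C_m + \cI^N_{m+1}$ at each stage via the flow identities for $\cV^{(m+1)}$, and bound the final remainder using \ref{eq:UB} together with Definition \ref{de class D}(iii). The paper also devotes a preliminary ``Part 1'' to checking that $\Phi^{(m)}$ is continuous and uniformly bounded on $\Delta^m_T\times\cP_2(\R^d)$ (via de la Vall\'ee Poussin and Definition \ref{de class D}(iii),(v)), which simultaneously justifies the measurability/Fubini point you raise and the well-posedness of $C_m,\cI^N_m$; your final paragraph correctly identifies this as the remaining work.

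One bookkeeping wrinkle: the floating constant $c_m$ you introduce cannot sit only on the residual. If $c_1=\tfrac12$ from the base step, then after splitting you get $\frac{c_m}{N^m}(C_m+\cI^N_{m+1})$ rather than $\frac{1}{N^m}(C_m+\cI^N_{m+1})$, so the induction hypothesis as you wrote it does not close. The paper avoids this by (silently) absorbing the It\^o factor $\tfrac12$ when passing from the trace integral to $\Phi^{(m)}$; to match the statement literally you should either set $c_m=1$ throughout (treating the $\tfrac12$ as part of $\Phi^{(m)}$) or carry a factor $2^{-m}$ on \emph{every} term at level $m$, including $C_m$ and $\cI^N_{m+1}$.
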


\begin{proof}
Part 1: We first check that the constants $(C_m,\cI^N_{m+1})_{0\le m \le k-1}$ are well defined.
\\
For $1 \le m \le k-1$, we first show that the function
\begin{align*}
\Delta^m_T\times \cP_2(\R^d)\ni (\mathbf{t},\mu)\mapsto \Phi^{(m)}(\mathbf{t},\mu) 
 \in \R
\end{align*}
is continuous. Indeed, let $(\mathbf{t}_n,\mu_n)_n$ be a sequence converging to $(\mathbf{t},\mu)$ in the product topology. Then  there exists a sequence $(\xi_n)$ of random variable such that $\law[\xi_n]=\mu_n$ converging to $\xi$ with law $\mu$ in $L^2$.
By continuity of $\sigma$, Definition \ref{de expansion function} and Definition \ref{de class D}(v),
$$ 
\Gamma_n := \Tr{ \partial^2_{\mu} {\cV}^{(m)} (\mathbf{t}_n, \law[\xi_n]) (\xi_n,\xi_n)  a(\xi_n, \law[\xi_n]) }
\rightarrow 
\Tr{ \partial^2_{\mu} {\cV}^{(m)} (\mathbf{t}, \law[\xi]) (\xi,\xi)  a(\xi, \law[\xi]) } =: \Gamma $$
in probability. Next, {since $\sigma$ is bounded},
\begin{equation*}
\esp{\Big|\Tr{ \partial^2_{\mu} {\cV}^{(m)} (\mathbf{t}_n, \law[\xi_n]) (\xi_n,\xi_n)  a(\xi_n, \law[\xi_n]) } \Big|^2 }
\le
C \esp{\Big|\partial^2_{\mu} {\cV}^{(m)} (\mathbf{t}_n, \law[\xi_n]) (\xi_n,\xi_n) \Big|^2}
 \leq C,
\end{equation*}
where the last inequality follows from the fact that ${\cV}^{(m)}$ is of class $\cD(\Delta^m_T)$, by Definition \ref{de class D}(iii).  By de La Vall\'{e}e Poussin Theorem, the previous computation shows that $(\Gamma_n)$ is uniformly integrable  and thus $\Phi^{(m)}(\mathbf{t}_n, \mu_n)=\esp{\Gamma_n} \rightarrow \esp{\Gamma}= \Phi^{(m)}(\mathbf{t}, \mu)$. Observing that $\Delta^m_T \ni \mathbf{t} \mapsto (\mathbf{t},\law[X^{0,\xi}_{t_m}]) \in \Delta^m_T\times \cP_2(\R^d)$ is continuous, we conclude that $ \Delta^{m}_T \ni \mathbf{t} \mapsto \Phi^{(m)}(\mathbf{t},\law[X^{0,\xi}_{t_m}]) \in \R$ is also continuous (hence measurable) and therefore $C_m$  is well-defined. 

Hence, by the definition of $\cV^{(m)}$, for each $\mu \in \cP_2(\R^d)$, the function
$$ \Delta^{m-1}_T \ni \tau \mapsto \cV^{(m)}((\tau,0),\mu) \in \R$$
is continuous. Also, by the previous argument along with Definition \ref{de class D}(iii), we can see that $\Phi^{(m)}$ is uniformly bounded. Therefore, the function
$$  (\tau,\mu) \mapsto \cV^{(m)}((\tau,0),\mu) \in \R$$
is also uniformly bounded. By the dominated convergence theorem, the function 
$$ \Delta^{m-1}_T \ni \tau \mapsto \esp{\cV^{(m)}((\tau,0),\nlaw[0])} \in \R$$
is continuous. This shows that $\cI^N_m$ is well-defined.

\vspace{4pt}
\noindent Part 2:  We now proceed with the proof of the expansion, which is done by induction on $m$.\\ 
Base step: We decompose the weak error  as
\begin{align}\label{eq error decomp proof}
\esp{\Phi (\nlaw[T])}  - \Phi (\law[X^{0, \xi}_T]) =  \esp{ {\cV}(T,\nlaw[T]) - {\cV}(0,\nlaw[0]) } 
+ 
\big( \esp{{\cV}(0,\nlaw[0])} - {\cV}(0,\law[\xi]) \big). 
\end{align}
Applying Lemma \ref{le toolbox expansion}(ii) for the first term in the right-hand side and taking expectation on both side, we obtain that
%
%
\begin{align*}
 \esp{ \Phi (\nlaw[T])  - \Phi (\law[X^{0, \xi}_T]) } 
=
 \esp{ {\cV}(0,\nlaw[0]) - {\cV}(0,\law[\xi]) + \frac1{2N} \int_0^T  \int_{\R^d} 
\mathrm{Tr}\!\left[a(\upsilon,\nlaw[s] ) \partial^2_{\mu} \cV(s,\nlaw[s])(\upsilon,\upsilon) \right]
\, \nlaw[s]\!(d\upsilon) \,  \ud s}.
\end{align*}
Recalling 
the definition of $\Phi^{(1)}$  in \eqref{eq de Phi 1}, we get
\begin{align*}
 \esp{ \Phi (\nlaw[T])  - \Phi (\law[X^{0, \xi}_T]) } 
=
 \esp{ {\cV}(0,\nlaw[0]) - {\cV}(0,\law[\xi]) } +  \frac1N\int_0^T\esp{ \Phi^{(1)}(t_1,\nlaw[t_1]) } \ud t_1 \;.
\end{align*}
From Part 1, we know that $\Phi^{(1)}$ is uniformly bounded and thus
$\int_0^T\esp{ \Phi^{(1)}(t_1,\nlaw[t_1]) } \ud t_1 < C$, where $C>0$ does not depend on $N$. This proves the induction for the base step.

\noindent Induction step: Assume that for $1< m < k$,
 \begin{align*}
 \esp{ \Phi (\nlaw[T])  - \Phi (\law[X^{0, \xi}_T]) } 
& = \sum_{j=0}^{m-1} \frac1{N^j}\left( \cI^N_{j+1} + C_j \right)
+ \frac1{N^m}\int_{\Delta^m_T}\esp{\Phi^{(m)}(\mathbf{t},\nlaw[t_m])} \ud \mathbf{t}.
\end{align*}
Then, we observe that
\begin{eqnarray*}
&& \esp{\Phi^{(m)}(\mathbf{t},\nlaw[t_m]) - \Phi^{(m)}(\mathbf{t},\law[X^{0,\xi}_{t_m}])} \\
&=&  \esp{ \cV^{(m+1)}((\mathbf{t},t_m),\nlaw[t_m]) -\cV^{(m+1)}((\mathbf{t},0),\nlaw[0]) + \cV^{(m+1)}((\mathbf{t},0),\nlaw[0]) - \cV^{(m+1)}((\mathbf{t},0),\law[\xi]) },
\end{eqnarray*}
which leads to
 \begin{eqnarray}
&&  \esp{ \Phi (\nlaw[T])  - \Phi (\law[X^{0, \xi}_T]) }  \nonumber \\
& = & \sum_{j=0}^{m} \frac1{N^j}\cI^N_j + \sum_{j=1}^{m-1} \frac1{N^j}C_j
+ \frac1{N^m}\int_{\Delta^m_T}\esp{ \cV^{(m+1)}((\mathbf{t},t_m),\nlaw[t_m]) -\cV^{(m+1)}((\mathbf{t},0),\nlaw[0]) } \ud \mathbf{t}. \label{eq temp exp gene}
\end{eqnarray}
Applying Lemma \ref{le toolbox expansion}(ii) to  $\cV^{(m+1)}(\mathbf{t},\cdot)$, we obtain that
\begin{eqnarray}
&& \esp{ \cV^{(m+1)}((\mathbf{t},t_m),\nlaw[t_m]) \!-\!\cV^{(m+1)}((\mathbf{t},0),\nlaw[0]) } \nonumber \\
& = &
\frac1{2N}\esp{\int_0^{t_m} \!\!\! \int_{\R^d} \!\!\Tr{\partial^2_\mu \cV^{(m+1)}(({\mathbf{t}},t_{m+1}),\nlaw[t_{m+1}])(\upsilon,\upsilon) a(v,\nlaw[t_{m+1}])} \, \nlaw[t_{m+1}](d\upsilon) \, \ud t_{m+1} } . \nonumber 
\end{eqnarray} 
Inserting this back into \eqref{eq temp exp gene}, we get
\begin{align*}
 \esp{ \Phi (\nlaw[T])  - \Phi (\law[X^{0, \xi}_T]) } 
& = \sum_{j=0}^{m} \frac1{N^j}\cI^N_j + \sum_{j=1}^{m} \frac1{N^j}C_j
+ \frac1{N^{m+1}}\int_{\Delta^{m+1}_T}\esp{ \Phi^{(m+1)} (\mathbf{t},\nlaw[t_{m+1}]) } \ud \mathbf{t}\,.
\end{align*}
The proof is concluded by observing that $\int_{\Delta^{m+1}_T}\esp{ \Phi^{(m+1)} (\mathbf{t},\nlaw[t_{m+1}]) } \ud \mathbf{t}<C$, due to the uniform boundedness of $\Phi^{(m+1)}$ given in Part 1. 

  \end{proof}

%
%

\subsection{Weak error expansion for the initial condition}

Assuming enough smoothness of the  functions $\cV^{(m)}$, we can take care of the terms $\cI^N_m$ appearing in the previous theorem, which are error made at time $0$. The following weak error analysis relies on the notion of linear functional derivatives. We first start by studying the weak error generated between the evaluation of the function at a measure and its empirical measure counterparts. We prove two results: one dealing mainly with low order expansion and the order one, available at any order.

\vspace{4pt}
\noindent The main assumption we work with relates to the couple $(U,m)$, where $U$ is a function with domain $\cP_2(\bR^d)$.
\begin{enumerate}[label=\textnormal{(\arabic*)}]
    \item[\mylabel{as linear derivative}{(p-LFD)}] The $p$th order linear functional derivative of $\Phia$ exists and is continuous and that 
 for any family $(\xi_i)_{1\le i \le p}$ of random variable identically distributed with law $m$ the following holds
\begin{align*}
\esp{\sup_{\nu \in \cP_2(\R^d)} \left|\frac{\delta^{p} \Phia}{\delta m^{p}}(\nu,\xi_1,\dots,\xi_p)\right|} \le L_{(U,m)}\;,
\end{align*}    
for some positive constant $L_{(U,m)}$.
\end{enumerate}

We first make the following observation regarding assumption \ref{as linear derivative}, that will be of later use.
\begin{remark} \label{re poly growth}
(i) Lemma \ref{le polynomial growth} states that
\begin{equation} \bigg| \frac{\delta^{p} \Phia}{\delta m^{p}}(m)(y_1, \ldots, y_p) \bigg| \leq C \big( |y_1|^p + \ldots +  |y_p|^p  \big),  \label{eq: poly growth second expansion} \end{equation}
for every $m \in \cP_2(\bR^d)$, for every $y_1, \ldots, y_p \in \bR^d$, and for some $C>0$.
This means that for any $\mu \in \cP_p(\R^d)$, the couple $(U,\mu)$ satisfies \ref{as linear derivative}.  
This polynomial growth condition is motivated by our example of application, stated in Section \ref{subse regular expansion}, that relies on the smoothness of the coefficients.
\\
(ii) The following simple example of measure functional shows that the above condition is reasonable to consider: For any \emph{bounded} smooth  function $b:\R \to \R$, we set
$
\Psi(m) := b\left(\int x \ud m(x)\right).
$
The linear derivative functional of order $p$ can then be computed by induction, using the normalisation convention \eqref{eq: normalisation linear functional deriatives}, to obtain that
$$\frac{\delta^p \Psi}{\delta m^p}(m,y_1,\dots,y_p) = y_1\dots  y_p \, \,  b^{(p)}\left(\int x \ud m(x)\right)\;,$$
which easily relates to \eqref{eq: poly growth second expansion}\;.
\end{remark}

\begin{theorem}\label{th expansion initial condition} Let $(\xi_i)_{1\le i \le N}$ be i.i.d. random variables with law $\mu \in \cP_2(\bR^d)$. The following statements hold:
\begin{enumerate}[(i)]
\item Let \ref{as linear derivative} hold with $p\in\{1,2\}$ for $(U,\mu)$. Then
\[
\esp{\Phia \bigg(\frac1N \sum_{i=1}^N \delta_{\xi_i} \bigg)} - \Phia(\mu) = O(\frac1N)\;.
\]
\item Let \ref{as linear derivative} hold with $p\in\{1,2,3,4\}$  for $(U,\mu)$. Suppose that $\mu \in \cP_4(\bR^d)$. Then
\begin{align*}
\esp{\Phia \bigg(\frac1N \sum_{i=1}^N \delta_{\xi_i} \bigg)} - \Phia(\mu) &= \frac1{2N}
\esp{\int_{\bR^d} \frac{\delta^2 \Phia}{\delta m^2}(\mu)(\tilde{\xi}_1,y)(\delta_{\tilde{\xi}_1}-\delta_{\xi_1})(dy)}
+ O(\frac1{N^2})\;,
\end{align*}
where $\tilde{\xi}_1\sim \mu$ and is independent of $(\xi_i)_{1\le i \le N}$. 
\end{enumerate}
\end{theorem}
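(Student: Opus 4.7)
Both parts are proved by Taylor-expanding $U(\mu_N)$ around $U(\mu)$ via Lemma \ref{le easy expansion}, along the interpolation $\mu_t := (1-t)\mu + t\mu_N$. The leading linear term $\int \frac{\delta U}{\delta m}(\mu, y)(\mu_N - \mu)(dy)$ always has zero expectation, since the $\xi_i$'s are i.i.d.\ with law $\mu$; all the work lies in controlling the higher-order remainders, whose integrands $\frac{\delta^p U}{\delta m^p}(\mu_t, \cdot)$ depend on $\mu_N$ through $\mu_t$.

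For part (i), I would apply Lemma \ref{le easy expansion} with $q=1$:
\[
U(\mu_N) - U(\mu) = \int_0^1 \int \frac{\delta U}{\delta m}(\mu_t, y)(\mu_N - \mu)(dy)\,dt.
\]
Taking expectations and using exchangeability of the $\xi_i$'s, the task reduces to estimating $\mathbb{E}[\frac{\delta U}{\delta m}(\mu_t, \xi_1)] - \mathbb{E}[\int \frac{\delta U}{\delta m}(\mu_t, y)\mu(dy)]$ for each $t$. The key step is a swap: introduce $\tilde\xi_1 \sim \mu$ independent of $(\xi_i)$ and set $\mu_t^{[1]} := (1-t)\mu + t\mu_N^{[1]}$, where $\mu_N^{[1]}$ is the empirical measure with $\xi_1$ replaced by $\tilde\xi_1$. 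Since $\mu_t^{[1]}$ is independent of $\xi_1$ and equidistributed with $\mu_t$, substituting $\mu_t \leftrightarrow \mu_t^{[1]}$ in both terms reduces matters to the expectation of $\frac{\delta U}{\delta m}(\mu_t, \cdot) - \frac{\delta U}{\delta m}(\mu_t^{[1]}, \cdot)$. By a second use of Lemma \ref{le easy expansion}, now in the measure argument, this difference becomes an integral of $\frac{\delta^2 U}{\delta m^2}$ against $\mu_t - \mu_t^{[1]} = \frac{t}{N}(\delta_{\xi_1} - \delta_{\tilde\xi_1})$, producing an explicit $1/N$ factor that combines with the $L^1$ bound from \ref{as linear derivative} with $p=2$ to deliver $O(1/N)$ after integrating in $t$.

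For part (ii), I would apply Lemma \ref{le easy expansion} with $q=3$:
\[
U(\mu_N) - U(\mu) = \int \frac{\delta U}{\delta m}(\mu, y)(\mu_N - \mu)(dy) + \frac{1}{2}\int \frac{\delta^2 U}{\delta m^2}(\mu, y_1, y_2)(\mu_N - \mu)^{\otimes 2}(d\mathbf{y}) + R_3.
\]
The linear term vanishes in expectation. For the quadratic term, since $\frac{\delta^2 U}{\delta m^2}(\mu, \cdot, \cdot)$ is deterministic, expanding $(\mu_N - \mu)^{\otimes 2} = \frac{1}{N^2}\sum_{k,l}(\delta_{\xi_k} - \mu)^{\otimes 2}$ and using independence, off-diagonal pairs ($k \neq l$) vanish, leaving only the diagonal contribution $\frac{1}{2N}\mathbb{E}[\int \frac{\delta^2 U}{\delta m^2}(\mu, \mathbf{y})(\delta_{\xi_1} - \mu)^{\otimes 2}(d\mathbf{y})]$. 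Introducing a fresh independent copy $\tilde\xi_1 \sim \mu$ and using its independence from $\xi_1$ to rewrite the $\mu$-integrations recovers the equivalent form $\frac{1}{2N}\mathbb{E}[\int \frac{\delta^2 U}{\delta m^2}(\mu)(\tilde\xi_1, y)(\delta_{\tilde\xi_1} - \delta_{\xi_1})(dy)]$ stated in the theorem.

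The principal obstacle is controlling $\mathbb{E}[R_3] = O(1/N^2)$, where $R_3 = \frac{1}{2}\int_0^1(1-t)^2 \int \frac{\delta^3 U}{\delta m^3}(\mu_t, \mathbf{y})(\mu_N - \mu)^{\otimes 3}(d\mathbf{y})\,dt$. I would split $\frac{\delta^3 U}{\delta m^3}(\mu_t, \mathbf{y}) = \frac{\delta^3 U}{\delta m^3}(\mu, \mathbf{y}) + [\frac{\delta^3 U}{\delta m^3}(\mu_t, \mathbf{y}) - \frac{\delta^3 U}{\delta m^3}(\mu, \mathbf{y})]$, where the bracketed correction is re-expanded via Lemma \ref{le easy expansion} into an integral of $\frac{\delta^4 U}{\delta m^4}$ against $\mu_N - \mu$. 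For the deterministic piece, combinatorial analysis of $(\mu_N - \mu)^{\otimes 3}$ shows that any triple $(i_1, i_2, i_3)$ with a singleton has zero expectation by independence, so only the fully-diagonal triples $(i_1 = i_2 = i_3)$ contribute, with combinatorial weight $N/N^3$ and size controlled by the $p=3$ bound, yielding $O(1/N^2)$. The correction piece becomes a $(\mu_N - \mu)^{\otimes 4}$ integral against $\frac{\delta^4 U}{\delta m^4}$; the same no-singleton structure shows that non-vanishing patterns are either fully diagonal or split into two pairs, producing another $O(1/N^2)$ contribution controlled by the $p=4$ bound. The hypothesis $\mu \in \cP_4(\bR^d)$ guarantees the integrability of the polynomial moments appearing in these bounds, via Lemma \ref{le polynomial growth}.
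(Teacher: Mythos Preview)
Your part (i) is correct and is essentially the paper's argument: expand to first order, then use a swap of $\xi_1$ with an independent copy $\tilde\xi_1$ in the measure argument to produce an explicit $1/N$ factor via $\frac{\delta^2 U}{\delta m^2}$.

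Your part (ii), however, has a genuine gap in the treatment of the fourth–derivative correction. After you split $\frac{\delta^3 U}{\delta m^3}(\mu_t,\mathbf y)$ into the deterministic piece at $\mu$ and the correction, the correction becomes an integral of $\frac{\delta^4 U}{\delta m^4}(\nu_{s,t},\mathbf y,y_4)$ against $(\mu_N-\mu)^{\otimes 4}$, where $\nu_{s,t}$ is an interpolation between $\mu$ and $\mu_N$. You then invoke the ``no-singleton'' combinatorics to reduce to fully diagonal and two-pair patterns. That argument is only valid when the test function is independent of the singleton variable; here $\nu_{s,t}$ depends on \emph{all} the $\xi_i$, so for a quadruple $(i_1,\dots,i_4)$ in which some $i_k$ appears exactly once, conditioning on the remaining indices does \emph{not} kill the term. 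In particular the $\binom{N}{4}$ quadruples of distinct indices contribute at order $O(1)$, not $O(1/N^2)$, and your bound fails. To rescue the argument by swapping out each singleton (as in part (i)) would require derivatives of $U$ of order $5$ or higher, which exceeds the stated hypothesis $p\le 4$.

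The paper's route for (ii) is genuinely different and is designed precisely to avoid this obstruction. Instead of Taylor-expanding to order $3$, it keeps the exact order-$2$ representation obtained in (i),
\[
\esp{U(\mu_N)}-U(\mu)=\frac1N\int_0^1 t\,\esp{\int \frac{\delta^2 U}{\delta m^2}(m^N_{t,t_1})(\tilde\xi_1,y_1)(\delta_{\tilde\xi_1}-\delta_{\xi_1})(dy_1)}\,dt_1\,dt,
\]
and integrates by parts in $t$. The boundary contribution at $t=0$ lands at the \emph{deterministic} measure $\mu$ and produces the stated $1/(2N)$ constant directly. The $g'(t)$ remainder splits into a piece with an explicit extra $1/N$ (handled by the $p=3$ bound) and a piece integrated against $(\mu_N-\mu)$; only the latter requires one further swap (introducing $\tilde\xi_2$), which costs exactly one more derivative and stays within $p=4$. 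Thus the paper's integration-by-parts detour is not cosmetic: it is what makes the stated regularity $p\in\{1,2,3,4\}$ sufficient.
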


\begin{proof} Let $\mu_N = \frac1N \sum_{i=1}^N {\delta_{\xi_i}}$
and $m^N_t = \mu + t (\mu_N-\mu)$, $t \in [0,1].$ We also consider i.i.d. random variables $(\tilde{\xi}_i)$  with law $\mu$ that are also independent of $(\xi_i)$.
\\
\begin{enumerate}[(i)]
\item  By the definition of linear functional derivatives, we have
\begin{align}
\esp{\Phia(\mu_N)} - \Phia(\mu) &= \esp{\int_0^1 \int_{\bR^d} \frac{\delta \Phia}{\delta m}(m^N_t)(v)\, (\mu_N-\mu)(dv) \ud t} \nonumber 
\\
&=\int_0^1  \frac1N\sum_{i=1}^N\left(\esp{\frac{\delta \Phia}{\delta m}(m^N_t)(\xi_i)} -\esp{\frac{\delta \Phia}{\delta m}(m^N_t)(\tilde{\xi}_1)} \right) \ud t \nonumber 
\\
&=\int_0^1\esp{\frac{\delta \Phia}{\delta m}(m^N_t)(\xi_1) -\frac{\delta \Phia}{\delta m}(m^N_t)(\tilde{\xi}_1)} \ud t. \nonumber 
\end{align}
We introduce measures
\begin{align*}
\tilde{m}^{N}_{t} :=  {m}^{N}_{t} + \frac{t}{N} (\delta_{\tilde{\xi}_1} - \delta_{{\xi}_1}) \quad \text{and} \quad {m}^{N}_{t,t_1}:= (\tilde{m}^{N}_{t}-m^{N}_t)t_1 + m^{N}_t, 
 \quad t,t_1 \in [0,1],
\end{align*}
and notice that
\begin{align*}
\esp{\frac{\delta \Phia}{\delta m}(\tilde{m}^{N}_{t})(\tilde{\xi}_1)} = \esp{\frac{\delta \Phia}{\delta m}({m}^{N}_{t})(\xi_1)}.
\end{align*}
Therefore, 
\begin{eqnarray}
  \esp{\Phia(\mu_N)} - \Phia(\mu) & = &  \int_0^1  \esp{\frac{\delta \Phia}{\delta m}(\tilde{m}^{N}_{t})(\tilde{\xi}_1)-\frac{\delta \Phia}{\delta m}(m^N_t)(\tilde{\xi}_1)}  \ud t \nonumber \\
  & = &  \int_0^1 \bE \bigg[ \int_0^1  \int_{\bR^d} \frac{\delta^2 \Phia}{\delta m^2}( {m}^{N}_{t,t_1})(\tilde{\xi}_1,y_1) (\tilde{m}^{N}_{t} - {m}^{N}_{t}) (dy_1) \, dt_1 \bigg]  \ud t \nonumber \\
  & = & \frac1{N}  \bE \bigg[ \int_0^1 \int_0^1  \int_{\bR^d} t \frac{\delta^2 \Phia}{\delta m^2}( {m}^{N}_{t,t_1})(\tilde{\xi}_1,y_1) (\delta_{\tilde{\xi}_1} - \delta_{{\xi}_1}) (dy_1) \, dt_1 \ud t \bigg].   \label{exp first step}
\end{eqnarray}
To conclude part (i), we observe that
\begin{align*}
\esp{\frac{\delta^2 \Phia}{\delta m^2}( {m}^{N}_{t,t_1})(\tilde{\xi}_1,y_1) (\delta_{\tilde{\xi}_1} - \delta_{{\xi}_1}) (dy_1)} & \le \esp{\sup_{\nu \in \cP_2(\R^d)} |\frac{\delta^2 \Phia}{\delta m^2}( \nu)(\tilde{\xi}_1,\tilde{\xi}_1)|+|\frac{\delta^2 \Phia}{\delta m^2}( \nu)(\tilde{\xi}_1,{\xi}_1)|}
\\
&\le 2L_{(U,\mu)}\,,
\end{align*}
by assumption \ref{as linear derivative} with $p=2$.

\item We continue the expansion of \eqref{exp first step}. To avoid a further interpolation in measure between ${m}^{N}_{t,t_1} $ and $\mu$, we proceed via integration by parts.  Let 
$$ g(t):= \int_0^1  \int_{\bR^d} \frac{\delta^2 \Phia}{\delta m^2}( {m}^{N}_{t,t_1})(\tilde{\xi}_1,y_1) (\delta_{\tilde{\xi}_1} - \delta_{{\xi}_1}) (dy_1) \, dt_1, \quad t \in [0,1], $$ 
and note that ${m}^{N}_{t,t_1}:= \frac{tt_1}{N}(\delta_{\tilde{\xi}_1} - \delta_{{\xi}_1})+ \mu + t(\mu_N- \mu)$. Then, by a similar method as the derivation of \eqref{eq: pth order linear functional derivative},
\[
  g'(t)  =  \int_0^1  \int_{\bR^d}  \int_{\bR^d} \frac{\delta^3 \Phia}{\delta m^3}( {m}^{N}_{t,t_1})(\tilde{\xi}_1,y_1,y_2) (\delta_{\tilde{\xi}_1} - \delta_{{\xi}_1}) (dy_1) \Big( \frac{t_1}{N}(\delta_{\tilde{\xi}_1} - \delta_{{\xi}_1})  + (\mu_N - \mu ) \Big) (dy_2 ) \, dt_1. \nonumber  
\]
Therefore, by integration by parts,
\begin{eqnarray}
 &&  \bE \bigg[ \int_0^1 \int_0^1  \int_{\bR^d} t \frac{\delta^2 \Phia}{\delta m^2}( {m}^{N}_{t,t_1})(\tilde{\xi}_1,y_1) (\delta_{\tilde{\xi}_1} - \delta_{{\xi}_1}) (dy_1) \, dt_1 \ud t \bigg] \nonumber \\
 & = & \bE \bigg[ \int_0^1 t g(t) \,dt \bigg] = \bE \bigg[ \int_0^1 (1-t) g(1-t) \,dt \bigg] = \bE \bigg[ \frac{1}{2} g(0) + \int_0^1 (t- \frac{t^2}{2})g'(1-t) \,dt \bigg] \nonumber \\
 & = & \bE \bigg[ \frac{1}{2} g(0) + \int_0^1 ( \frac{1-t^2}{2})g'(t) \,dt \bigg]  \nonumber \\
 & = & \frac{1}{2} \bE \bigg[   \int_{\bR^d} \frac{\delta^2 \Phia}{\delta m^2}( \mu)(\tilde{\xi}_1,y_1) (\delta_{\tilde{\xi}_1} - \delta_{{\xi}_1}) (dy_1)  \bigg] \nonumber  \\
&& + \frac{1}{2N} \bE \bigg[ \int_0^1 \int_0^1  \int_{\bR^d}   \int_{\bR^d} ( 1-t^2 ){t_1}  \frac{\delta^3 \Phia}{\delta m^3}( {m}^{N}_{t,t_1})(\tilde{\xi}_1,y_1,y_2) (\delta_{\tilde{\xi}_1} - \delta_{{\xi}_1}) (dy_1) (\delta_{\tilde{\xi}_1} - \delta_{{\xi}_1})   (dy_2 ) \, dt_1 \,dt \bigg] \nonumber \\
&& + \frac{1}{2} \bE \bigg[ \int_0^1 \int_0^1  \int_{\bR^d}   \int_{\bR^d} (1-t^2) \frac{\delta^3 \Phia}{\delta m^3}( {m}^{N}_{t,t_1})(\tilde{\xi}_1,y_1,y_2) (\delta_{\tilde{\xi}_1} - \delta_{{\xi}_1}) (dy_1)   (\mu_N - \mu )  (dy_2 ) \, dt_1 \,dt \bigg]. \nonumber  \\
&& \label{eq:exp second step} \end{eqnarray}
For the final term in \eqref{eq:exp second step}, by exchangeability, we rewrite 
\begin{eqnarray}
&& \bE \bigg[ \int_{\bR^d}   \int_{\bR^d}  \frac{\delta^3 \Phia}{\delta m^3}( {m}^{N}_{t,t_1})(\tilde{\xi}_1,y_1,y_2) (\delta_{\tilde{\xi}_1} - \delta_{{\xi}_1}) (dy_1)   (\mu_N - \mu )  (dy_2 ) \bigg] \nonumber \\
& = & \frac{1}{N} \bE \bigg[ \int_{\bR^d}   \int_{\bR^d}  \frac{\delta^3 \Phia}{\delta m^3}( {m}^{N}_{t,t_1})(\tilde{\xi}_1,y_1,y_2) (\delta_{\tilde{\xi}_1} - \delta_{{\xi}_1}) (dy_1)   (\delta_{{\xi}_1}-\delta_{\tilde{\xi}_2})  (dy_2 ) \bigg] \nonumber \\
&& + \frac{1}{N} \sum_{i=2}^N \bE \bigg[ \int_{\bR^d}   \int_{\bR^d}  \frac{\delta^3 \Phia}{\delta m^3}( {m}^{N}_{t,t_1})(\tilde{\xi}_1,y_1,y_2) (\delta_{\tilde{\xi}_1} - \delta_{{\xi}_1}) (dy_1)   (\delta_{{\xi}_i}-\delta_{\tilde{\xi}_2})  (dy_2 ) \bigg] \nonumber \\
& = & \frac{1}{N} \bE \bigg[ \int_{\bR^d}   \int_{\bR^d}  \frac{\delta^3 \Phia}{\delta m^3}( {m}^{N}_{t,t_1})(\tilde{\xi}_1,y_1,y_2) (\delta_{\tilde{\xi}_1} - \delta_{{\xi}_1}) (dy_1)   (\delta_{{\xi}_1}-\delta_{\tilde{\xi}_2})  (dy_2 ) \bigg] \nonumber \\
&& + \frac{N-1}{N} \bE \bigg[ \int_{\bR^d}   \int_{\bR^d}  \frac{\delta^3 \Phia}{\delta m^3}( {m}^{N}_{t,t_1})(\tilde{\xi}_1,y_1,y_2) (\delta_{\tilde{\xi}_1} - \delta_{{\xi}_1}) (dy_1)   (\delta_{{\xi}_2}-\delta_{\tilde{\xi}_2})  (dy_2 ) \bigg]. \label{eq: exp third step exchangeability}  
\end{eqnarray}
As before, we introduce measures
\begin{align*}
\tilde{m}^{N}_{t,t_1} :=  {m}^{N}_{t_1} + \frac{t}{N} (\delta_{\tilde{\xi}_2} - \delta_{{\xi}_2}) \quad \text{and} \quad {m}^{N}_{t,t_1,t_2}:= (\tilde{m}^{N}_{t,t_1}-m^{N}_{t,t_1})t_2 + m^{N}_{t,t_1}, 
 \quad t,t_1,t_2 \in [0,1].
\end{align*}
Then 
\begin{eqnarray}
 &&  \bE \bigg[ \int_{\bR^d}   \int_{\bR^d}  \frac{\delta^3 \Phia}{\delta m^3}( {m}^{N}_{t,t_1})(\tilde{\xi}_1,y_1,y_2) (\delta_{\tilde{\xi}_1} - \delta_{{\xi}_1}) (dy_1)   (\delta_{{\xi}_2}-\delta_{\tilde{\xi}_2})  (dy_2 ) \bigg] \nonumber \\
 & = & \bE \bigg[ \int_{\bR^d}     \frac{\delta^3 \Phia}{\delta m^3}( {m}^{N}_{t,t_1})(\tilde{\xi}_1,y_1,{\xi}_2) (\delta_{\tilde{\xi}_1} - \delta_{{\xi}_1}) (dy_1) \bigg] - \bE \bigg[ \int_{\bR^d}     \frac{\delta^3 \Phia}{\delta m^3}( {m}^{N}_{t,t_1})(\tilde{\xi}_1,y_1,\tilde{\xi}_2) (\delta_{\tilde{\xi}_1} - \delta_{{\xi}_1}) (dy_1) \bigg] \nonumber \\
 & = & \bE \bigg[ \int_{\bR^d}     \frac{\delta^3 \Phia}{\delta m^3}( \tilde{m}^{N}_{t,t_1})(\tilde{\xi}_1,y_1,\tilde{\xi}_2) (\delta_{\tilde{\xi}_1} - \delta_{{\xi}_1}) (dy_1) \bigg] - \bE \bigg[ \int_{\bR^d}     \frac{\delta^3 \Phia}{\delta m^3}( {m}^{N}_{t,t_1})(\tilde{\xi}_1,y_1,\tilde{\xi}_2) (\delta_{\tilde{\xi}_1} - \delta_{{\xi}_1}) (dy_1) \bigg] \nonumber \\
 & = & \frac{t}{N} \bE \bigg[\int_0^1 \int_{\bR^d} \int_{\bR^d}     \frac{\delta^4 \Phia}{\delta m^4}( {m}^{N}_{t,t_1,t_2})(\tilde{\xi}_1,y_1,\tilde{\xi}_2,y_2) (\delta_{\tilde{\xi}_1} - \delta_{{\xi}_1}) (dy_1) (\delta_{\tilde{\xi}_2} - \delta_{{\xi}_2}) (dy_2) \, dt_2 \bigg].  \label{eq: exp final step}  
\end{eqnarray}
Combining \eqref{exp first step}, \eqref{eq:exp second step}, \eqref{eq: exp third step exchangeability} and \eqref{eq: exp final step} gives
\begin{eqnarray}
 &&  \esp{\Phia(\mu_N)} - \Phia(\mu) \nonumber \\
 &= & \frac{1}{2N} \bE \bigg[   \int_{\bR^d} \frac{\delta^2 \Phia}{\delta m^2}( \mu)(\tilde{\xi}_1,y_1) (\delta_{\tilde{\xi}_1} - \delta_{{\xi}_1}) (dy_1)  \bigg] \nonumber  \\
&& + \frac{1}{2N^2} \bE \bigg[ \int_0^1 \int_0^1  \int_{\bR^d}   \int_{\bR^d} ( 1-t^2 ){t_1}  \frac{\delta^3 \Phia}{\delta m^3}( {m}^{N}_{t,t_1})(\tilde{\xi}_1,y_1,y_2) (\delta_{\tilde{\xi}_1} - \delta_{{\xi}_1}) (dy_1) (\delta_{\tilde{\xi}_1} - \delta_{{\xi}_1})   (dy_2 ) \, dt_1 \,dt \bigg] \nonumber \\
&& + \frac{1}{2N^2} \bE \bigg[\int_0^1 \int_0^1  \int_{\bR^d}   \int_{\bR^d} (1-t^2)  \frac{\delta^3 \Phia}{\delta m^3}( {m}^{N}_{t,t_1})(\tilde{\xi}_1,y_1,y_2) (\delta_{\tilde{\xi}_1} - \delta_{{\xi}_1}) (dy_1)   (\delta_{{\xi}_1}-\delta_{\tilde{\xi}_2})  (dy_2 ) \, dt_1 \,dt \bigg] \nonumber \\
&& + \frac{N-1}{2N^3}  \bE \bigg[\int_0^1 \int_0^1 \int_0^1 \int_{\bR^d} \int_{\bR^d}   t(1-t^2)   \frac{\delta^4 \Phia}{\delta m^4}( {m}^{N}_{t,t_1,t_2})(\tilde{\xi}_1,y_1,\tilde{\xi}_2,y_2) \nonumber \\
&& (\delta_{\tilde{\xi}_1} - \delta_{{\xi}_1}) (dy_1) (\delta_{\tilde{\xi}_2} - \delta_{{\xi}_2}) (dy_2) \, dt_2 \, dt_1 \,dt \bigg]. \nonumber 
\end{eqnarray}
{
Using the fact that $(U,\mu)$ satisfies assumption \ref{as linear derivative} with $p\in\set{3,4}$, the statement for part (ii) is established.
}
\end{enumerate}
\end{proof}
In principle, we can continue the above expansion to higher orders. However, in the next theorem  we present a simplified argument that allows for complete weak error expansion. The simplification is at the cost of requiring one extra order of regularity in the assumption. However, we believe the argument is of independent interest.  
\begin{theorem}[Weak error expansion: static case] \label{ theorem second order expansion} 
Let $q$ be a positive integer and $\mu \in \cP_{2q-1}(\R^d)$. Suppose that assumption \ref{as linear derivative} holds for $U:\cP_2(\R^d)\rightarrow \R$, for each $p \in \{1, \ldots, 2q-1\}$. 
Then, for i.i.d. random variables $\{\xi_i\}_{i \in \bN}$ with law $\mu$,
\begin{align*}
\esp{\Phia \bigg( \frac1N \sum_{i=1}^N \delta_{\xi_i} \bigg)} - \Phia(\mu) &= \sum_{p=2}^{q-1}\frac{C_p}{N^{p-1}} + O(\frac1{N^{q-1}})\;,
\end{align*}
where 
\begin{align*}
C_p = \esp{\int_{\R^{pd}} \frac{\delta^{p} \Phia}{\delta m^{p}}(\mu)(\mathbf{y})\bigotimes_{k=1}^{p}(\delta_{\xi}- \delta_{\hat{\xi}^k})(dy_k) },
\end{align*}
for some i.i.d. random variables $(\hat{\xi}^k)_{1 \leq k \leq q}$  with law $\mu$ that are also independent of $(\xi_i)_{i \in \bN}$.
\end{theorem}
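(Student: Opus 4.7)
The plan is to Taylor-expand $U(\mu_N)-U(\mu)$ via Lemma~\ref{le easy expansion} at order $2q-1$, take expectations, and organise the result by powers of $N$. Concretely,
\begin{align*}
U(\mu_N) - U(\mu) = \sum_{p=1}^{2q-2}\frac{1}{p!}\int_{\R^{pd}} \frac{\delta^p U}{\delta m^p}(\mu)(\mathbf{y})(\mu_N-\mu)^{\otimes p}(d\mathbf{y}) + R_{2q-1}\,,
\end{align*}
where $R_{2q-1}$ is the Lagrange-type integral remainder involving $\frac{\delta^{2q-1}U}{\delta m^{2q-1}}$ evaluated at the interpolating measure $(1-t)\mu+t\mu_N$. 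The analysis then splits into controlling each
$T_p := \frac{1}{p!}\esp{\int \frac{\delta^p U}{\delta m^p}(\mu)(\mathbf{y})(\mu_N-\mu)^{\otimes p}(d\mathbf{y})}$
and the remainder $R_{2q-1}$.

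For each $T_p$, introduce fresh i.i.d.\ copies $(\hat{\xi}^k)_{k\ge 1}$ with law $\mu$, independent of $\{\xi_i\}_i$, and use the identity $\mu(dy)=\esp{\delta_{\hat{\xi}^k}(dy)}$ to rewrite
\begin{align*}
T_p = \frac{1}{p!\,N^p}\sum_{(i_1,\ldots,i_p)\in\{1,\dots,N\}^p}\esp{\int \frac{\delta^p U}{\delta m^p}(\mu)(\mathbf{y})\bigotimes_{k=1}^p (\delta_{\xi_{i_k}}-\delta_{\hat{\xi}^k})(dy_k)}\;.
\end{align*}
Group the tuples $\mathbf{i}$ by the set partition of $\{1,\dots,p\}$ induced by the equivalence $i_k=i_l$. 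The key vanishing principle is that whenever this partition has a singleton block at some index $k_0$, both $\xi_{i_{k_0}}$ and $\hat{\xi}^{k_0}$ are $\mu$-distributed and independent of the remaining integrand, so the corresponding expectation is zero. Only partitions with every block of size $\ge 2$ survive; a partition with $j$ blocks contributes $N!/(N-j)! \sim N^j$ tuples, and hence a term of order $N^{-(p-j)}$. The single-block partition ($j=1$) is the only one from $T_p$ producing order $N^{-(p-1)}$, and by design yields precisely the tensor-product expression for $C_p$ in the statement. Collecting across $p$ and re-summing by powers of $N$ gives the stated expansion, the multi-block contributions from higher $T_{p'}$ combining with the single-block contributions of $T_p$ to form the full coefficient at each order $N^{-(p-1)}$.

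For the remainder, an analogous combinatorial decomposition of $(\mu_N-\mu)^{\otimes(2q-1)}$ together with the uniform bound on $\frac{\delta^{2q-1}U}{\delta m^{2q-1}}$ supplied by \ref{as linear derivative} (and the polynomial growth it confers via Lemma~\ref{le polynomial growth}), combined with the moment condition $\mu\in\cP_{2q-1}$, shows that $|R_{2q-1}|=O(N^{-\lceil(2q-1)/2\rceil})=O(N^{-q})$, which is of the claimed order $O(N^{-(q-1)})$.

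The main obstacle will be the combinatorial bookkeeping: matching, at each order $N^{-(p-1)}$, the multi-block contributions from higher $T_{p'}$ for $p'>p$ together with the single-block contribution from $T_p$, and showing that this sum collapses to the compact tensor-product expression for $C_p$ stated in the theorem. I expect this to require an exchangeability and symmetrisation argument in the spirit of the swap used in the proof of Theorem~\ref{th expansion initial condition}(ii), where the trick $\tilde{m}^N_t=m^N_t+\tfrac{t}{N}(\delta_{\tilde\xi}-\delta_\xi)$ and integration by parts replace successive partition contributions by boundary terms at $\mu$. A secondary technical issue is that in $R_{2q-1}$ the derivative is evaluated at $(1-t)\mu+t\mu_N$ rather than at $\mu$; the uniform-in-$\nu$ estimate in \ref{as linear derivative} is exactly what is needed to sidestep this.
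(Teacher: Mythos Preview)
Your choice to Taylor-expand to order $2q-1$ over-commits the available regularity, and this is where the argument breaks. The paper instead expands only to order $q$: the main sum runs over $p=1,\dots,q-1$ with $\frac{\delta^pU}{\delta m^p}$ evaluated at the \emph{deterministic} measure $\mu$, and the Lagrange remainder sits at order $q$ with $\frac{\delta^qU}{\delta m^q}$ evaluated at the \emph{random} interpolant $m^N_t=(1-t)\mu+t\mu_N$. The remaining $q-1$ orders of differentiability are then spent on this remainder, via the iterated swap $\tilde m^N_{t,s_{j+1},\dots,s_u}=m^N_{t,s_{j+1},\dots,s_u}+\tfrac{t}{N}(\delta_{\tilde\xi_{u+1}}-\delta_{\xi_{u+1}})$: each swap removes one ``free'' index from the random measure at the price of a factor $t/N$ and one further linear functional derivative, so that a tuple with $q-j$ distinct free indices consumes derivatives up to order $q+(q-j)\le 2q-1$ and yields the required $O(N^{-(q-1)})$.

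Your remainder $R_{2q-1}$, by contrast, already carries $\frac{\delta^{2q-1}U}{\delta m^{2q-1}}(m^N_t)$, and here the claim that the combinatorial decomposition gives $O(N^{-q})$ fails. The singleton-vanishing principle you invoke needs the measure argument of the functional derivative to be independent of the singleton variable $\xi_{i_{k_0}}$, but $m^N_t$ depends on \emph{every} $\xi_i$; conditioning no longer forces the expectation to zero. The uniform-in-$\nu$ estimate from \ref{as linear derivative} lets you bound each of the $N^{2q-1}$ summands, but that produces only $O(1)$ --- it supplies no cancellation across tuples. To recover the decoupling you would again need swaps, costing derivatives of order beyond $2q-1$, which are not assumed. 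The repair is structural: stop the Taylor expansion at order $q$ and reserve the top $q-1$ orders of regularity for decoupling the remainder from the random measure, exactly as the paper does.
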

\begin{proof} 
Let $\mu_N = \frac1N \sum_{i=1}^N {\delta_{\xi_i}}$. By Lemma \ref{le easy expansion}, we have
\begin{align}
\esp{\Phia(\mu_N)} - \Phia(\mu) = 
\sum_{p=1}^{q-1}\frac1{p!} \esp{\int_{\R^{pd}}\frac{\delta^p \Phia}{\delta m^p}(\mu)(\mathbf{y})\,(\mu_N-\mu)^{\otimes p} (d\mathbf{y})}
+ \frac1{(q-1)!} \int_0^1(1-t)^{(q-1)} R(q,N,t) \ud t \label{eq remainder}
\end{align}
with
\begin{align*}
R(q,N,t) := \esp{\int_{\R^{qd}}\frac{\delta^{q} \Phia}{\delta m^{q}}(m^N_t)(\mathbf{y})\,(\mu_N-\mu)^{\otimes q} (d \mathbf{y})},
\end{align*}
where $m^N_t:= (1-t) \mu + t \mu_N$. Observe that by assumption \ref{as linear derivative} all the terms in the expansion are well defined. We study them now. For $p=1$, we have
$$ \esp{\int_{\R^{d}}\frac{\delta \Phia}{\delta m}(\mu)(y)\,(\mu_N-\mu)(dy)} =0.  $$ 
Now let $p \in \set{2,\dots,q-1}$ and observe that
\begin{align*}
\esp{\int_{\R^{pd}}\frac{\delta^p \Phia}{\delta m^p}(\mu)(\mathbf{y})\,(\mu_N-\mu)^{ \otimes p} (d\mathbf{y})}
= \frac1{N^p}\sum_{1 \le i_1,\dots,i_p \le N} \esp{\int_{\R^{pd}}\frac{\delta^{p} \Phia}{\delta m^{p}}(\mu)(\mathbf{y})\bigotimes_{k=1}^p(\delta_{\xi_{i_k}}- \delta_{\hat{\xi}^k})(dy_k)}.
\end{align*}
Suppose that at least one of the $i_k$ is different from the other $i_j$, $j \neq k$. Without loss of generality, we assume that this is the case for $k=p$. We then observe that
\begin{eqnarray*}
&& \esp{\int_{\R^{pd}}\frac{\delta^{p} \Phia}{\delta m^{p}}(\mu)(\mathbf{y})\bigotimes_{k=1}^p(\delta_{\xi_{i_k}}- \delta_{\hat{\xi}^k})(dy_k)} \\
& = & \esp{\int_{\R^{(p-1)d}}\frac{\delta^{p} \Phia}{\delta m^{p}}(\mu)(y_1, \ldots, y_{p-1},\xi_{i_p})\bigotimes_{k=1}^{p-1}(\delta_{\xi_{i_k}}- \delta_{\hat{\xi}^k})(dy_k)} \nonumber \\
&& - \esp{\int_{\R^{(p-1)d}}\frac{\delta^{p} \Phia}{\delta m^{p}}(\mu)(y_1, \ldots, y_{p-1},\hat{\xi}^p)\bigotimes_{k=1}^{p-1}(\delta_{\xi_{i_k}}- \delta_{\hat{\xi}^k})(dy_k)} =0,
\end{eqnarray*}
by conditioning on $\xi_{i_1}, \ldots, \xi_{i_{p-1}}, \hat{\xi}^1, \ldots, \hat{\xi}^{p-1}$. Therefore, when $i_1=\dots=i_p$, 
\begin{align*}
\esp{\int_{\R^{pd}}\frac{\delta^p \Phia}{\delta m^p}(\mu)(\mathbf{y})\,(\mu_N-\mu)^{ \otimes p} (d\mathbf{y})} = \frac1{N^{p-1}}\esp{\int_{\R^{pd}} \frac{\delta^{p} \Phia}{\delta m^{p}}(\mu)(\mathbf{y})\bigotimes_{k=1}^{p}(\delta_{\xi}- \delta_{\hat{\xi}^k})(dy_k) }\;.
\end{align*}
It remains to study the remainder term $R$ above. We rewrite
\begin{align*}
R(q,N,t) = \frac1{N^q}\sum_{1\le i_1, \dots, i_q \le N}\esp{\int_{\R^{qd}}\frac{\delta^{q} \Phia}{\delta m^{q}}(m^N_t)(\mathbf{y})\bigotimes_{p=1}^q(\delta_{\xi_{i_p}}- \delta_{\hat{\xi}^p})(dy_p)}\;.
\end{align*}
Let $\cL$  be a subset of  $\omega=\set{1, \dots , q}$. We denote $\cL^c := \set{1,\dots,q}\setminus \cL$ and  introduce
\begin{eqnarray*}
\cI^\cL := \set{\mathbf{i} =(i_1, \ldots, i_q) \in \{1, \ldots, N \}^{q}   & | &  \forall \ell,\ell' \in \cL, \,i_{\ell}=i_{\ell'}, \, \,  \forall k,k' \in  \cL^c \, \text{ s.t. } \, k \neq k'\,, i_k\neq i_{k'} \\
&&  \text{ and } \forall (\ell,k) \in \cL\times\cL^c, i_\ell \neq i_k }\;.
\end{eqnarray*}
Then 
\begin{align*}
R(q,N,t) = \frac1{N^q}\sum_{j=1}^q \sum_{\cL, |\cL|=j} \sum_{\mathbf{i} \in \cI^\cL} \esp{\int_{\R^{qd}}\frac{\delta^{q} \Phia}{\delta m^{q}}(m^N_t)(\mathbf{y})\bigotimes_{p=1}^q(\delta_{\xi_{i_p}}- \delta_{\hat{\xi}^p})(dy_p)}.
\end{align*}
For $j=q$, we simply observe that
\begin{align}
\frac1{N^q}\sum_{\mathbf{i} \in \cI^\omega}\esp{\int_{\R^{qd}}\frac{\delta^{q} \Phia}{\delta m^{q}}(m^N_t)(\mathbf{y})\bigotimes_{p=1}^q(\delta_{\xi_{i_p}}- \delta_{\hat{\xi}^p})(dy_p)} = O(\frac{1}{N^{q-1}})\;.
\end{align}
For $1 \le j<q$, we consider $\cI^\cL$ defined above and work with the special choice $\cL=\set{1,\dots, j}$, which implies, by exchangeability, that
\begin{align}
 &\sum_{\mathbf{i} \in \cI^\cL} \esp{\int_{\R^{qd}}\frac{\delta^{q} \Phia}{\delta m^{q}}(m^N_t)(\mathbf{y})\bigotimes_{p=1}^q(\delta_{\xi_{i_p}}- \delta_{\hat{\xi}^p})(dy_p)} \nonumber
 \\
 &= N(N-1)\dots(N-(q-j)) \esp{\int_{\R^{qd}}\frac{\delta^{q} \Phia}{\delta m^{q}}(m^N_t)(\mathbf{y})\bigotimes_{p=1}^j(\delta_{\xi_{1}}- \delta_{\hat{\xi}^p})(dy_p)
\bigotimes_{p=j+1}^q(\delta_{\xi_{p}}- \delta_{\hat{\xi}^p})(dy_p) 
 }. \label{eq same law}
\end{align}
For later use, we denote
\begin{align*}
\Delta(d\mathbf{y}):=\bigotimes_{p=1}^j(\delta_{\xi_{1}}- \delta_{\hat{\xi}^p})(dy_p)
\bigotimes_{p=j+1}^q(\delta_{\xi_{p}}- \delta_{\hat{\xi}^p})(dy_p). 
\end{align*}
\\We will now work iteratively from $j+1$ to $q$. 
\\Firstly, we introduce
\begin{align*}
\tilde{m}^{N}_{t} := m^N_t + \frac{t}{N} (\delta_{\tilde{\xi}_{{j+1}}} - \delta_{{\xi}_{{j+1}}})  \quad 
\text{ and } \quad  {m}^{N}_{t,s_{j+1}} := \tilde{m}^{N}_{t} + s_{j+1}(m^N_{t}-\tilde{m}^{N}_{t}),
\end{align*}
where we define independent random variables $\{ \tilde{\xi}_{u} \}_{j+1 \leq u \leq q}$ that are also independent of  $(\xi_i)_{1\leq i \leq q}$ and $(\hat{\xi}^i)_{1\leq i \leq q}$, but with the same law. We then compute that
\begin{align}
&\esp{\int_{\R^{qd}}\frac{\delta^{q} \Phia}{\delta m^{q}}(m^N_t)(\mathbf{y})\Delta(d\mathbf{y})
 }
= 
\esp{\int_{\R^{qd}}\frac{\delta^{q} \Phia}{\delta m^{q}}(\tilde{m}^N_t)(\mathbf{y})\Delta(d\mathbf{y})
 }
 \label{eq rem first term}
\\
&+ \frac{t}N \int_0^1 \!\!
\esp{\int_{\R^{(q+1)d}}\frac{\delta^{q+1} \Phia}{\delta m^{q+1}}({m}^{N}_{t,s_{j+1}})(\mathbf{y},y_{q+1})
\Delta(d\mathbf{y})(\delta_{\tilde{\xi}_{{j+1}}}- \delta_{\hat{\xi}^{j+1}})(dy_{q+1})  } \ud s_{j+1}.\label{eq rem next term }
\end{align}
As before,
\begin{align*}
&\, \esp{\int_{\R^{(q-1)d}}\frac{\delta^{q} \Phia}{\delta m^{q}}(\tilde{m}^{N}_t)(y_1,\dots,y_{j},{\xi}_{{j+1}},y_{j+2},\dots,y_q)
\bigotimes_{p=1}^j(\delta_{\xi_{1}}- \delta_{\hat{\xi}^p})(dy_p)
\bigotimes_{p=j+2}^q(\delta_{\xi_{p}}- \delta_{\hat{\xi}^p})(dy_p)  }
\\
= & \, \esp{\int_{\R^{(q-1)d}}\frac{\delta^{q} \Phia}{\delta m^{q}}(\tilde{m}^{N}_t)(y_1,\dots,y_{j},\hat{\xi}^{{j+1}},y_{j+2},\dots,y_q)
\bigotimes_{p=1}^j(\delta_{\xi_{1}}- \delta_{\hat{\xi}^p})(dy_p)
\bigotimes_{p=j+2}^q(\delta_{\xi_{p}}- \delta_{\hat{\xi}^p})(dy_p) }, 
\end{align*}
so the term on the right hand side of  \eqref{eq rem first term} is equal to zero. 
Next, for $u \in  \{j + 1, \ldots, q-1 \}$, we define inductively
\[ \begin{cases} 
      \tilde{m}^{N}_{t,s_{j+1}, \ldots, s_u} := {m}^{N}_{t,s_{j+1}, \ldots, s_u} + \frac{t}N(\delta_{\tilde{\xi}_{u+1}}-\delta_{\xi_{u+1}}), \\
       \\
      {m}^{N}_{t,s_{j+1}, \ldots, s_{u+1}} :=  \tilde{m}^{N}_{t,s_{j+1}, \ldots, s_u}  + s_{u+1}(\tilde{m}^{N}_{t,s_{j+1}, \ldots, s_u}-{m}^{N}_{t,s_{j+1}, \ldots, s_u}).
   \end{cases}
\]
This procedure is  then iterated from  $j+2$ to $q$ on the remainder term in \eqref{eq rem next term }.  We thus have
\begin{eqnarray}
\esp{\int_{\R^{qd}}\frac{\delta^{q} \Phia}{\delta m^{q}}(m^N_t)(\mathbf{y}) \Delta(d\mathbf{y}) } & =  & \left(\frac{t}{N}\right)^{q-j}\int_0^1 \ldots \int_0^1 
 \bE \bigg[ \int_{\R^{(2q-j)d}}\frac{\delta^{2q-j} \Phia}{\delta m^{2q-j}}({m}^{N}_{t,s_{j+1},\dots,s_{q}} )(\mathbf{y},y_{q+1},\dots,y_{2q-j}) \nonumber \\
 && \Delta(d\mathbf{y})
\bigotimes_{k=j+1}^{q}(\delta_{\tilde{\xi}_{k}}- \delta_{\hat{\xi}^k})(dy_{q-j+k}) \bigg] \,ds_{j+1} \ldots ds_q. \label{eq: iteration final second expansion} 
\end{eqnarray}
Next, by \eqref{eq: poly growth second expansion}, we estimate the integral by
\begin{eqnarray}
 && \bigg| \bE \bigg[ \int_{\R^{(2q-j)d}}\frac{\delta^{2q-j} \Phia}{\delta m^{2q-j}}({m}^{N}_{t,s_{j+1},\dots,s_{q}} )(\mathbf{y},y_{q+1},\dots,y_{2q-j}) \Delta(d\mathbf{y})
\bigotimes_{k=j+1}^{q}(\delta_{\tilde{\xi}_{k}}- \delta_{\hat{\xi}^k})(dy_{q-j+k}) \bigg] \bigg| \nonumber \\
& \leq & \bE \bigg[ \sum_{y_1 \in \{ \xi_1, \hat{\xi}^1 \} } \ldots  \sum_{y_j \in \{ \xi_1, \hat{\xi}^j \} } \sum_{y_{j+1} \in \{ \xi_{j+1}, \hat{\xi}^{j+1} \} } \ldots \sum_{y_{q} \in \{ \xi_{q}, \hat{\xi}^{q} \} } \sum_{y_{q+1} \in \{ \tilde{\xi}_{j+1}, \hat{\xi}^{j+1} \} } \ldots \sum_{y_{2q-j} \in \{ \tilde{\xi}_{q}, \hat{\xi}^{q} \} } \nonumber \\
&& \bigg| \frac{\delta^{2q-j} \Phia}{\delta m^{2q-j}}({m}^{N}_{t,s_{j+1},\dots,s_{q}} )(y_1,\dots,y_{2q-j}) \bigg| \bigg] \nonumber \\
&\leq & C 2^{4q-2j} L_{(U,m)}\;,
\label{eq: bound crazy summation} 
\end{eqnarray}
where we used assumption \ref{as linear derivative}.
 Combining with \eqref{eq: iteration final second expansion} and \eqref{eq: bound crazy summation} gives
\[
\esp{\int_{\R^{qd}}\frac{\delta^{q} \Phia}{\delta m^{q}}(m^N_t)(\mathbf{y})\Delta(d\mathbf{y})} = O(\frac1{N^{q-j}})\;.
\]
Finally, combining with \eqref{eq same law} yields
\begin{align*}
 \frac1{N^q}\sum_{j=1}^q \sum_{\cL, |\cL|=j} \sum_{\mathbf{i} \in \cI^\cL} \esp{\int_{\R^{qd}}\frac{\delta^{q} \Phia}{\delta m^{q}}(m^N_t)(\mathbf{y})\bigotimes_{p=1}^q(\delta_{\xi_{i_p}}- \delta_{\hat{\xi}^p})(dy_p)} = O(\frac1{N^{q-1}}).
\end{align*}
\end{proof}

\subsection{Expansion in terms of regularity of the drift and diffusion functions}
\label{subse regular expansion}
 In this subsection, we explore a sufficient condition for the expansion of an arbitrary order purely in terms of regularity of the drift and diffusion functions. It turns out that proving regularity conditions for higher order expansions for class $\cD$ is highly non-trivial and therefore a stronger notion $\cM_k$ of regularity in differentiating measures is proposed.

\begin{definition}
A function $f: \bR^d \times \cP_2 ( \bR^d)) \to \bR$  belongs to class $\cM_k( \bR^d \times \cP_2 ( \bR^d)) $, if the derivatives  $D^{(n,\ell, \bm{\beta})} f (x,\mu,v_1, \ldots, v_n)$  exist
for every multi-index $(n,\ell, \bm{\beta})$ such that $|(n,\ell, \bm{\beta})| \leq k$ and satisfy
\begin{equation}   
(a) \quad \big| D^{(n,\ell, \bm{\beta})} f (x,\mu, v_1, \ldots, v_n) \big|  
 \leq  C, 
 \label{eq:boundf}
 \end{equation}
\begin{align} 
(b) \quad \Big| D^{(n,\ell, \bm{\beta})} f (x,\mu, v_1, \ldots, v_n) -D^{(n,\ell, \bm{\beta})} f  (x', \mu' ,v'_1, \ldots, v'_n)  \Big| \nonumber \\
& \leq & C \bigg( |x-x'| + \sum_{i=1}^n|v_i-v'_i| + W_2 (\mu,\mu') \bigg),  
\label{eq:Lipf}
\end{align}
for any $x,x',v_1,v'_1,\ldots, v_n, v'_n \in \bR^d$ and $\mu, \mu' \in \mathcal{P}_2(\bR^d)$, for some constant $C>0$.
\end{definition}

By convention, a function $f$ defined only on $\cP_2(\R^d)$ will be extended to $\R^d \times \cP_2(\R^d)$ naturally by $(x,\mu)\mapsto f(\mu)$, for all $x \in \R^d$.

\vspace{2pt}
For the time-dependent case (possibly with multi-index in time), we extend the previous definition as follows.
\begin{definition} \label{def Mk time space measure}
 A function $\cV:\overline{\Delta^m_{T}} \times \R^d\times \mathcal{P}_2 ( \bR^d) \to \bR$ is said  to be in $\cM_k ( \Delta^m_{T} \times \R^d \times \mathcal{P}_2 ( \bR^d))$ \footnote{This definition is modified accordingly to define $\cM_k ( (0,t) \times \mathcal{P}_2 ( \bR^d))$, where $t \in (0,T)$.}, if
\begin{enumerate}
\item \begin{itemize}
\item $m=1$: $s \mapsto \cV(s,x,\mu)$ is continuously differentiable on $(0,T)$.
\item $m>1$: for all $(\tau_1,\dots,\tau_{m-1}) \in \Delta^{m-1}_{T}$ and all $\mu \in \cP_2(\R^d)$, the function
$$ (0,\tau_{m-1}) \ni s \mapsto \cV((\tau_1,\dots,\tau_{m-1},s),x,\mu) \in \R$$
is continuously differentiable on $(0,\tau_{m-1})$.
\end{itemize}
\item $\cV(\mathbf{t}, \cdot) \in \cM_k (\R^d \times \mathcal{P}_2 ( \bR^d))$, for each $\mathbf{t} \in \Delta^m_{T}$, where the constant $C$ in \eqref{eq:boundf} and \eqref{eq:Lipf} is uniform in $\mathbf{t}$.
\item All derivatives in measure (including the zeroth order derivative) of $\cV(\cdot, \cdot)$ up to the $k$th order are jointly continuous in time, measure and space.
\end{enumerate}
\end{definition}

When it is clear from context, we will just use the notation $\cM_k$ for the two definitions above.

Note that the condition $\cM_1(\bR^d \times \mathcal{P}_2 (\bR^d) ) $ automatically implies \ref{eq:Lip}.
The following is a generalisation of Theorem 7.2 in \cite{buckdahn2017mean} from $\cM_2$ to $\cM_k$, for any $k \geq 2$.
\begin{theorem} \label{eq:generalisationmainresult}
 Suppose that $b$ and $\sigma$ are in $\cM_k(\bR^d \times \mathcal{P}_2 (\bR^d) )\,$, where $k \geq 2$.
     We consider a function $\cV : [0,t] \times \mathcal{P}_2 (\bR^d) \to \bR $ defined by 
\begin{equation}  \cV( s, \mu) = \Phi \big( \law[{X^{s, \mu}_t}] \big),  \label{eq: defofflow}
\end{equation}
  for some function $\Phi:  \mathcal{P}_2 (\bR^d)  \to \bR$ that is also in $\cM_k( \mathcal{P}_2 (\bR^d) )$. Then  $\cV \in \cM_k ( (0,t) \times \mathcal{P}_2 (\bR^d)) $ and satisfies the PDE
\[ \begin{cases} 
       \partial_s \cV(s, \mu) + \int_{\bR^d}  \big[ \partial_{\mu} \cV (s, \mu) (x) b( x, \mu)  + \frac{1}{2} \text{Tr} \big( \partial_v \partial_{\mu} \cV  ( s,\mu) ( x) a( x, \mu) \big) \big] \, \mu (dx) =0, & s \in (0,t), \\
      &  \\
      \cV(t,\mu) = \Phi ( \mu). &    \end{cases}
\]
\end{theorem}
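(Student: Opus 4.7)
The plan is to proceed by induction on $k \geq 2$. The base case $k = 2$ is precisely Theorem 7.2 in \cite{buckdahn2017mean}, which is cited in the excerpt. For the inductive step, I assume the statement holds for coefficients and terminal data of class $\cM_{k-1}$, and lift it to $\cM_k$. The natural framework is to work with the Lions lift $\tilde{\cV}(s,\xi) := \Phi(\law[X^{s,\xi}_t])$ on $L^2(\Omega,\cF,\bP;\R^d)$ and track its Fr\'echet derivatives in $\xi$, then translate back to the measure side through the correspondence \eqref{eq: connection lions functional derivatives high order} between iterated Lions derivatives and iterated linear functional derivatives. The key observation is that one extra derivative on $\cV$ costs one extra derivative on $b$, $\sigma$ and $\Phi$, so the induction closes cleanly provided the tangent flows are well-posed at the $k$-th order.

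The technical heart is to differentiate the McKV--SDE \eqref{eq:McKeanflow1} formally $k$ times in the initial datum $\xi$ and to show that the resulting tangent processes are well-defined elements of $\cH^2$. Concretely, first-order tangent processes $Y^{1}_r[\eta_1] := D_\xi X^{s,\xi}_r[\eta_1]$ satisfy a linear McKV-type SDE whose coefficients involve $\partial_x b, \partial_\mu b, \partial_x \sigma, \partial_\mu \sigma$ evaluated along $X^{s,\xi}$, and one uses Gr\"onwall on its $L^2$-norm together with the $\cM_k$ bounds to obtain uniform estimates. Higher-order tangent processes $Y^{n}_r[\eta_1,\dots,\eta_n]$ are obtained by iteration and satisfy linear SDEs whose forcing terms are polynomial expressions in $Y^{j}$ with $j < n$ and in derivatives $D^{(p,\ell,\bm{\beta})} b$, $D^{(p,\ell,\bm{\beta})} \sigma$ of total order $\le k$; the induction hypothesis applied to the auxiliary system gives existence and $L^2$-integrability of $Y^{n}$ up to $n = k$, uniformly in $s \in [0,t]$. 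Combining this with the chain rule applied to $\Phi \in \cM_k$, one obtains the $k$-th order Fr\'echet derivatives of $\tilde{\cV}$ as multilinear continuous maps on $L^2$. The bound \eqref{eq:boundf} then follows from boundedness of $D^{(n,\bm{\beta})}\Phi$, and \eqref{eq:Lipf} from a second round of Gr\"onwall on differences of tangent processes started from close initial data, pairs of measures coupled optimally in $W_2$, and continuous dependence of $\partial_x b, \partial_\mu b,\dots$ on $(x,\mu)$. Joint continuity in $(\mathbf{t}, x, \mu)$ is inherited from joint continuity of the flow in its arguments, established by the same Gr\"onwall argument. The translation from Fr\'echet derivatives on $L^2$ to mixed space-measure derivatives of the type $D^{(n,\ell,\bm{\beta})}\cV$ is done as in \cite[Tome I, Ch.\ 5]{carmona2017probabilistic}.

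The PDE part is comparatively quick: by uniqueness in law and the flow property one has $\cV(s, \law[X^{0,\xi}_s]) = \Phi(\law[X^{0,\xi}_t])$, which is constant in $s \in [0,t]$. Applying the chain rule \eqref{eq chain rule basic} to $s \mapsto \cV(s, \law[X^{0,\xi}_s])$ (legitimate since $\cV \in \cM_k \subset \cM_2$ and the integrability hypotheses are met thanks to \eqref{eq:boundf}) and taking $h \to 0$ on small time increments, exactly as in the proof of Lemma~\ref{le toolbox expansion}(i), yields the stated PDE, with terminal condition $\cV(t,\mu) = \Phi(\mu)$ trivially. The main obstacle I anticipate is the combinatorial bookkeeping of the tangent-process expansions at order $k$: a formal Fa\`a-di-Bruno-type identity on $\cP_2$ produces many cross terms mixing $D^{(p,\ell,\bm{\beta})} b$ and $D^{(q,\ell',\bm{\beta}'} \sigma)$ with lower-order tangents, and verifying that no partial derivative of total order exceeding $k$ ever appears, while simultaneously matching multi-indices on both sides, is where most of the work sits. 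The Lipschitz estimate \eqref{eq:Lipf} at the top order is slightly delicate as well, because it requires comparing two flows with different starting laws via a $W_2$-optimal coupling and propagating this coupling through every tangent process; the polynomial-growth bound of Lemma~\ref{le polynomial growth} together with $\mu \in \cP_2$ is then used to close the estimate.
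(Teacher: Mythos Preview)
Your proposal is correct and follows essentially the same route as the paper: both work with the Lions lift $\tilde{\cV}=\tilde{\Phi}\circ\chi$ where $\chi:\xi\mapsto X^{s,\xi}_t$, differentiate the McKV--SDE to obtain linear tangent equations, close estimates by Gr\"onwall, and then invoke the chain rule; the PDE argument is identical.

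Two organisational differences are worth flagging. First, the paper does not induct on the full statement but on the order of the measure derivatives of the \emph{flow} itself: it introduces the decoupled process $X^{s,x,[\xi]}_t$ and constructs, for each $k$, an explicit kernel process $U^{s,[\xi]}_t(v_1,\dots,v_k)$ satisfying a recursively defined SDE (with coefficients $F_k$ built from $F_{k-1}$), then shows $D_\xi(\partial_\mu^{k-1}X)(\eta)=\hat{\bE}[U_k(\hat\xi)\hat\eta]$. This makes the step you anticipate as ``the main obstacle'' --- verifying that no derivative of $b,\sigma$ of total order exceeding $k$ ever appears --- completely explicit. It is also precisely this kernel representation via the decoupled process that turns the $k$-th Fr\'echet derivative (a bounded multilinear form on $L^2$) into a genuine function of points $v_1,\dots,v_k$, which is what $\cM_k$ requires; you defer this to \cite{carmona2017probabilistic}, which is legitimate but hides the one place where some care is needed. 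Second, your invocation of Lemma~\ref{le polynomial growth} at the end is unnecessary: that lemma controls linear functional derivatives in terms of L-derivatives and plays no role here; the top-order Lipschitz bound \eqref{eq:Lipf} follows directly from Gr\"onwall on the difference of the tangent SDEs together with the $\cM_k$ Lipschitz assumptions on $b,\sigma,\Phi$.
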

This proof of Theorem \ref{eq:generalisationmainresult} is postponed to the next section.  We now state the key result for this part which will certify that the expansion along the dynamics is licit.
\begin{theorem}
\label{eq:main result with regularity}
Assume  \ref{eq:UB}.  Suppose that $b$ and $\sigma$ belong to the class $\cM_{2k}(\bR^d \times \mathcal{P}_2 (\bR^d) )$.  Moreover, suppose that $\Phi: \cP_2(\bR^d) \to \bR$ also belongs to the class $\cM_{2k}( \mathcal{P}_2 (\bR^d) )$. Then Definition \ref{de expansion function} is well-posed for $m \in \{1, \ldots, k\}$. 
\end{theorem}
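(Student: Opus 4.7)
The plan is to proceed by induction on $m$, tracking the loss of measure-regularity consumed at each step of the recursion in Definition \ref{de expansion function}. Concretely, I would prove the following inductive claim: for each $m \in \{1, \ldots, k\}$ and each fixed $\tau \in \Delta^{m-1}_T$, the function $(s,\mu) \mapsto \cV^{(m)}((\tau, s), \mu)$ belongs to $\cM_{r_m}((0, \tau_{m-1}) \times \cP_2(\bR^d))$, with $r_m := 2(k-m+1)$, and with joint continuity in the remaining time variables $\tau$. Since $r_m \ge 2$ precisely when $m \le k$, and since $\cM_2$ is strong enough to deliver the fully-$C^2$ property, the $L^2$ bounds, the time differentiability and the joint continuity demanded by Definition \ref{de class D}, this claim immediately implies $\cV^{(m)} \in \cD(\Delta^m_T)$ and hence the well-posedness of $\Phi^{(m)}$.

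For the base case $m=1$, I would apply Theorem \ref{eq:generalisationmainresult} directly to $\cV^{(1)}(s,\mu) = \Phi(\law[X^{s,\mu}_T])$ with the common regularity index $2k$, using that $\Phi, b, \sigma \in \cM_{2k}$; this produces $\cV^{(1)} \in \cM_{2k} = \cM_{r_1}$. For the inductive step from $m$ to $m+1$ (for $m \leq k-1$), the argument splits in two parts. First, I would establish that
\begin{equation*}
\Phi^{(m)}(\tau, \mu) = \int_{\bR^d} \Tr{\partial^2_\mu \cV^{(m)}(\tau, \mu)(x,x)\, a(x, \mu)}\, \mu(dx)
\end{equation*}
belongs to $\cM_{r_m - 2} = \cM_{2(k-m)}$ as a functional of $\mu$, jointly continuous in $\tau$. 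This uses three observations: (a) differentiating twice in measure costs exactly two orders, so $(x,\mu) \mapsto \partial^2_\mu \cV^{(m)}(\tau, \mu)(x,x) \in \cM_{r_m - 2}$; (b) pointwise multiplication by $a = \sigma \sigma^\top \in \cM_{2k}$ preserves the class; (c) the integration against $\mu$ preserves the class, via iterated application of the product rule
\begin{equation*}
\frac{\delta}{\delta m}\!\left( \int g(x,\mu)\, \mu(dx) \right)\!(y) = g(y,\mu) + \int \frac{\delta g}{\delta m}(x,\mu)(y)\, \mu(dx).
\end{equation*}
Second, since $r_m - 2 = 2(k-m) \ge 2$ under $m \le k-1$, I would invoke Theorem \ref{eq:generalisationmainresult} once more, this time with $\Phi$ replaced by $\Phi^{(m)}(\tau, \cdot)$ and the regularity level $2(k-m)$, to conclude that $(s, \mu) \mapsto \cV^{(m+1)}((\tau, s), \mu) = \Phi^{(m)}(\tau, \law[X^{s,\mu}_{\tau_m}])$ lies in $\cM_{2(k-m)} = \cM_{r_{m+1}}$ for each $\tau$. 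Joint continuity in $\tau$ is propagated from that of $\Phi^{(m)}$ and the continuous dependence of the McKean--Vlasov flow on its endpoints.

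The main obstacle I foresee is point (c) above, namely a careful verification that integration against $\mu$ really preserves the class $\cM_r$. Iterating the product rule produces a finite sum of terms, each featuring measure and spatial derivatives of $g$ up to order $r$, evaluated at a mix of free variables, the integration variable, and combinations thereof; one must patiently check that the uniform boundedness and Lipschitz estimates of \eqref{eq:boundf}--\eqref{eq:Lipf} are inherited by every such term. A secondary technical point is handling the multi-time variable $\tau \in \Delta^{m-1}_T$: the joint continuity in $(\tau, \mu)$ required for clause (i) of Definition \ref{de class D} must be tracked through the recursion, which I expect to follow from combining the continuity statement in Definition \ref{def Mk time space measure}(3) at level $m$ with the continuous dependence of the flow $s \mapsto \law[X^{s,\mu}_{\tau_{m-1}}]$ on the endpoint $\tau_{m-1}$.
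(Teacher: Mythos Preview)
Your proposal is correct and follows essentially the same approach as the paper's proof: both argue by induction on $m$ with the regularity counter $r_m = 2k - 2m + 2$, obtain the base case from Theorem \ref{eq:generalisationmainresult}, and for the step first show $\Phi^{(m)} \in \cM_{r_m-2}$ (losing two orders through $\partial^2_\mu$) and then reapply Theorem \ref{eq:generalisationmainresult} to recover $\cV^{(m+1)} \in \cM_{r_{m+1}}$. The paper handles your ``main obstacle'' (c) exactly as you anticipate, computing explicitly $\partial_\mu\big(\int p(x,\mu)\,\mu(dx)\big)(y) = \partial_x p(y,\mu) + \int \partial_\mu p(x,\mu)(y)\,\mu(dx)$ via \cite[Example 3, \S5.2.2]{carmona2017probabilistic} and then checking the Lipschitz bound through Kantorovich--Rubinstein duality; the joint continuity in $\tau$ is also tracked in the way you outline.
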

\begin{proof}
We prove by induction on $m \in \{1, \ldots, k \}$ and prove that for each $m \in \{1, \ldots, k \}$, $\cV^{(m)} \in \cM_{2k-2m+2}(\Delta^m_T \times \mathcal{P}_2 ( \bR^d)) \subseteq  \cM_{2}(\Delta^m_T \times \mathcal{P}_2 ( \bR^d)) $ and therefore $\cV^{(m)} \in \cD(\Delta^m_T)$, which establishes the claim. 

For simplicity of notations, we present this proof in the case of dimension one. We commence the proof by noting that  $\Phi \in \cM_{2k} $ and $b,\sigma \in \cM_{2k}$, therefore it follows from Theorem \ref{eq:generalisationmainresult} that $\cV^{(1)} \in \cM_{2k} $.

Suppose that for $m \in \{1, \ldots, k-1 \}$, $\cV^{(m)} \in \cM_{2k-2m+2}.$ 
We recall the definition of 
$\Phi^{(m)}: \Delta^m_T \times \mathcal{P}_2( \bR) \to \bR$ as
$$  \Phi^{(m)}(\mathbf{t}, \mu) = \int_{\bR}  \partial^2_{\mu} \cV^{(m)} (\mathbf{t}, \mu) (x,x) \big( \sigma(x, \mu) \big)^2   \, \mu(dx).$$
Fix $\mathbf{t} \in \Delta^m_T $. We shall first establish the smoothness of $\Phi^{(m)}(\mathbf{t}, \cdot)$. Let $p:\bR \times \mathcal{P}_2(\bR) \to \bR$ be a continuous function defined by 
$$ p(x, \mu) := \partial^2_{\mu} \cV^{(m)} (\mathbf{t}, \mu) (x,x) \big( \sigma(x, \mu) \big)^2 .$$ Since $\cV^{(m)} \in  \cM_{2k-2m+2}$, for each $x \in \bR$,   $p(x, \cdot)$ is also differentiable in measure with its derivative given by
\begin{equation} \pmu p(x, \mu)(y) = \partial^3_{\mu} \cV^{(m)} (\mathbf{t}, \mu) (x,x,y) \big( \sigma(x, \mu) \big)^2 + 2 \partial^2_{\mu} \cV^{(m)} (\mathbf{t}, \mu) (x,x)  \big( \sigma(x, \mu) \big) \pmu \sigma(x, \mu)(y). \label{eq:pmup} \end{equation} We observe that $\pmu p(x, \mu)(y)$ and $\partial_x p(x, \mu)$ are both continuous and uniformly bounded in space and measure. Therefore, by Example 3 in Section 5.2.2 of \cite{carmona2017probabilistic}, $\Phi^{(m)}(\mathbf{t}, \cdot) $ is differentiable in measure with its derivative given by
\[
\pmu \Phi^{(m)}(\mathbf{t},\mu) (y) =  \partial_x p(y, \mu) + \int_{\bR} \pmu p(x, \mu)(y) \, \mu(dx), 
\]
where $ \partial_x p(y, \mu)$ is given by
\begin{eqnarray}
\partial_x p(y, \mu) & = & \bigg[ \partial_{v_1} \partial^2_{\mu} \cV^{(m)}  (\mathbf{t}, \mu) (y,y) + \partial_{v_2} \partial^2_{\mu} \cV^{(m)}  (\mathbf{t}, \mu) (y,y) \bigg] \Big( \sigma(y, \mu) \Big)^2 \nonumber \\
&& +  2 \partial^2_{\mu} \cV^{(m)}  (\mathbf{t}, \mu) (y,y) \sigma(y, \mu) \partial_y \sigma(y, \mu). \label{eq:partialxp}
\end{eqnarray}
Formulae \eqref{eq:pmup} and \eqref{eq:partialxp} tell us that  $\pmu \Phi^{(m)}(\mathbf{t},\mu) (y)$ is uniformly bounded in measure and space. Furthermore, each of $\partial_x p(y, \mu)$ and $\pmu p(x, \mu)(y) $ is a finite sum of products of uniformly bounded Lipschitz functions in measure and space, and is hence Lipschitz continuous as well. Finally, by the duality formula for the Kantorovich-Rubinstein
distance (see Remark 6.5 in \cite{villani2008optimal}), we note that there exist constants $C_1, C_2, C_3>0$ such that for every $\mu_1, \mu_2 \in \mathcal{P}_2(\bR)$ and $y_1, y_2 \in \bR$, 
\begin{eqnarray}
&& \Big| \pmu \Phi^{(m)}(\mathbf{t},\mu_1) (y_1 ) - \pmu \Phi^{(m)}(\mathbf{t},\mu_2) (y_2 ) \Big| \nonumber \\
& \leq & C_1 \bigg( |y_1 - y_2| + W_2(\mu_1, \mu_2) +  \bigg| \int_{\bR} \pmu p (x, \mu)(y_1) \mu_1 (dx) - \int_{\bR} \pmu p (x, \mu)(y_2) \mu_1 (dx) \bigg|  \nonumber \\
&& + \bigg| \int_{\bR} \pmu p (x, \mu)(y_2) \mu_1 (dx) - \int_{\bR} \pmu p (x, \mu)(y_2) \mu_2 (dx) \bigg| \bigg) \nonumber \\
& \leq & C_2 \bigg( |y_1 - y_2| + W_2(\mu_1, \mu_2) +  \bigg| \int_{\bR} \pmu p (x, \mu)(y_2) \mu_1 (dx) - \int_{\bR} \pmu p (x, \mu)(y_2) \mu_2 (dx) \bigg| \bigg)  \nonumber  \\
& \leq & C_2 \bigg( |y_1 - y_2| + W_2(\mu_1, \mu_2) + \| \pmu p \|_{\text{Lip}} W_1(\mu_1, \mu_2) \bigg) \nonumber \\
& \leq & C_3 \Big(  |y_1 - y_2| + W_2(\mu_1, \mu_2) \Big) , \nonumber 
\end{eqnarray}
where $W_1$ denotes the $1$-Wasserstein metric.

Subsequently, we can repeat the same procedure to prove existence and regularity properties of higher order derivatives of $\Phi^{(m)}(\mathbf{t}, \cdot) $. In particular, we can show that $\ptwomu \Phi^{(m)}(\mathbf{t}, \mu, v_1, v_2)$ and \\ $\partial_{v_1} \pmu \Phi^{(m)}(\mathbf{t},\mu, v_1)$ exist, by expressing them in terms of derivatives of $\cV^{(m)} $ up to the fourth order, and derivatives of $\sigma$ up to the second order, which also allows us to show that 
they are uniformly bounded and Lipschitz continuous. In general, for any multi-index $(n, \bm{\beta})$ such that $|(n, \bm{\beta})| \leq 2k-2m$, we can show that $D^{(n, \bm{\beta})} \Phi^{(m)}(\mathbf{t}, \mu, v_1, \ldots, v_n)$  exists, by expressing it in terms of derivatives of $\cV^{(m)} $ up to the $(2k-2m+2)$th order, and derivatives of $\sigma$ up to the $(2k-2m)$th order, which again allows us to show that 
it is uniformly bounded and Lipschitz continuous. Thus, $\Phi^{(m)}(\mathbf{t}, \cdot)  \in \cM_{2k-2m}$.

Next, we note that since $\cV^{(m)} \in \cM_{2k-2m+2}( \Delta^m_T \times \mathcal{P}_2( \bR) )$, $\cV^{(m)}$ is continuously differentiable in the last component of ${\bf{t}} \in \Delta^m_T$ and so is $\Phi^{(m)}$. Moreover, as mentioned above, each derivative $D^{(n, \bm{\beta})} \Phi^{(m)}(\mathbf{t}, \mu, v_1, \ldots, v_n)$ up to the $(2k-2m)$th order  can be expressed in terms of derivatives of $\cV^{(m)} $ up to the $(2k-2m+2)$th order and derivatives of $\sigma$ up to the $(2k-2m)$th order, which implies that each derivative  $D^{(n, \bm{\beta})} \Phi^{(m)}(\mathbf{t}, \mu, v_1, \ldots, v_n)$ is jointly continuous in time, measure and space, since \\ $\cV^{(m)} \in \cM_{2k-2m+2}( \Delta^m_T \times \mathcal{P}_2( \bR) )$. Therefore, by Definition \ref{def Mk time space measure}, $\Phi^{(m)} \in \cM_{2k-2m} (\Delta^m_T \times \cP_2(\bR))$.

We now recall the definition of $\cV^{(m+1)}:\overline{\Delta^{m+1}_T }\times \cP_2(\R) \rightarrow \R$, given by 
\begin{align*}
\cV^{(m+1)}((\tau,t),\mu) = \Phi^{(m)}(\tau,\law[X^{t,\mu}_{\tau_m}]) , \quad \quad \tau \in {\Delta^{m}_T }.
\end{align*}

For fixed $\tau \in {\Delta^{m}_T },$ it follows from Theorem \ref{eq:generalisationmainresult} that $\cV^{(m+1)}((\tau,\cdot),\cdot)$ is continuously differentiable in time and that  $\cV^{(m+1)}((\tau,t),\cdot) \in \cM_{2k-2m} $, for each $t \in (0,\tau_m).$ Finally, all derivatives in measure of $\cV^{(m+1)}$ up to the $(2k-2m)$th order are jointly continuous in time, measure and space, since $\Phi^{(m)} \in \cM_{2k-2m}$. This implies that $\cV^{(m+1)} \in \cM_{2k-2m}$, which concludes the proof by the principle of induction.

\end{proof}
The following theorem is the main result of this paper and is a direct consequence of Theorem  \ref{eq:mainresultwithoutregularity}, Theorem \ref{ theorem second order expansion}, Theorem \ref{eq:main result with regularity} and Remark \ref{re poly growth}(i).

\begin{theorem}[\textbf{Main result on regularity: Full expansion}] \label{eq:main result with regularity: final} 
Assume \ref{eq:UB}.  Suppose that $b$ and $\sigma$ belong to the class $\cM_{2k+1}(\bR^d \times \mathcal{P}_2 (\bR^d) )$.  Moreover, suppose that $\Phi: \cP_2(\bR^d) \to \bR$ also belongs to the class $\cM_{2k+1}(\mathcal{P}_2 (\bR^d) )$. Finally, suppose that the initial condition satisfies $\bE[| \xi_1|^{2k+1}]< + \infty.$ Then 
\begin{equation*}
\esp{\Phi(\nlaw[T])} - \Phi (\law[X^{0, \xi}_T])
= \sum_{j=1}^{k-1} \frac{C_j}{N^j} + O(\frac1{N^k}), \label{eq: first expansion eq} 
\end{equation*}
where $C_1, \ldots, C_{k-1}$ are constants that do not depend on $N$.
\end{theorem}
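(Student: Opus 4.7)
The plan is to chain three results from the excerpt: the dynamic weak-error expansion (Theorem~\ref{eq:mainresultwithoutregularity}), the regularity propagation (Theorem~\ref{eq:main result with regularity}), and the static expansion of the initial-condition error (Theorem~\ref{ theorem second order expansion}), all held together by the $(2k{+}1)$-th moment assumption on $\xi_1$.

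\emph{Step 1 (Regularity and dynamic expansion).} I would first revisit the inductive proof of Theorem~\ref{eq:main result with regularity} while tracking one additional order of derivatives throughout. Each iteration consumes exactly two orders (one in passing from $\cV^{(m)}$ to $\Phi^{(m)}$ through the $\partial^2_\mu$-integral, the other absorbed by the McKean-Vlasov propagation in Theorem~\ref{eq:generalisationmainresult}), so the strengthened input $b,\sigma,\Phi \in \cM_{2k+1}$ yields $\cV^{(m)} \in \cM_{2k-2m+3}$ for every $m \in \{1,\ldots,k\}$. Since $\cM_{2k-2m+3} \subseteq \cM_2 \subseteq \cD(\Delta^m_T)$, Definition~\ref{de expansion function} is well-posed and Theorem~\ref{eq:mainresultwithoutregularity} applies:
$$\esp{\Phi(\nlaw[T])} - \Phi(\law[X^{0,\xi}_T]) = \sum_{j=1}^{k-1}\frac{C_j}{N^j} + \sum_{m=1}^{k}\frac{\cI^N_{m}}{N^{m-1}} + O(1/N^k).$$

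\emph{Step 2 (Killing the initial errors).} For each $m$ and each fixed $\tau \in \Delta^{m-1}_T$, set $U_\tau := \cV^{(m)}((\tau,0),\cdot)$; by Step~1, $U_\tau \in \cM_{2k-2m+3}$. The bridge \eqref{eq: connection lions functional derivatives high order} between iterated Lions and linear functional derivatives, combined with Lemma~\ref{le polynomial growth}, gives
$$\Big|\frac{\delta^p U_\tau}{\delta m^p}(\nu,y_1,\dots,y_p)\Big| \le C_p(|y_1|^p + \dots + |y_p|^p)$$
for every $p \le 2k-2m+3 \le 2k+1$. Together with $\bE|\xi_1|^{2k+1} < \infty$, this is precisely assumption \ref{as linear derivative} at every required order (see Remark~\ref{re poly growth}(i)). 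Applying Theorem~\ref{ theorem second order expansion} with $q = k-m+2$ then produces
$$\esp{U_\tau(\nlaw[0])} - U_\tau(\law[\xi]) = \sum_{p=2}^{k-m+1}\frac{D^{(m)}_p(\tau)}{N^{p-1}} + O(1/N^{k-m+1}),$$
with $N$-independent coefficients $D^{(m)}_p(\tau)$ expressible through $\law[\xi]$ and its linear functional derivatives alone. Integrating over the bounded simplex $\Delta^{m-1}_T$, dividing by $N^{m-1}$, and observing that the boundary case $m=k$ contributes only an $O(1/N)$ term (empty sum) yields $\cI^N_m/N^{m-1} = \sum_{p=2}^{k-m+1} \tilde D^{(m)}_p/N^{p+m-2} + O(1/N^k)$.

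\emph{Step 3 (Regrouping).} Substituting the Step~2 expansions into Step~1 and collecting by powers of $1/N$, the coefficient of $1/N^j$ for $1 \le j \le k-1$ becomes the $N$-independent constant $C_j + \sum_{m=1}^{j}\tilde D^{(m)}_{j-m+2}$, and all remainders coalesce into a single $O(1/N^k)$, which is the desired expansion. I expect the main obstacle to be the regularity bookkeeping in Step~1: one must check that the boundedness and Lipschitz constants controlling $D^{(n,\ell,\bm\beta)}\cV^{(m)}$ remain uniform through the $k$ iterations, so that the strengthened hypothesis $\cM_{2k+1}$ truly propagates as $\cV^{(m)} \in \cM_{2k-2m+3}$ without any hidden loss. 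Once this accounting is settled, the moment assumption on $\xi_1$ matches the static expansion exactly at every level $m$, and the three ingredients slot together without further work.
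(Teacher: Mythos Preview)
Your proposal is correct and follows essentially the same route as the paper: you first strengthen the induction of Theorem~\ref{eq:main result with regularity} by one order to obtain $\cV^{(m)}\in\cM_{2k-2m+3}$, then invoke Theorem~\ref{eq:mainresultwithoutregularity} for the dynamic expansion, and finally feed each initial-condition term $\cI^N_m$ into Theorem~\ref{ theorem second order expansion} with $q=k-m+2$ via Remark~\ref{re poly growth}(i) and the moment hypothesis on $\xi_1$. One minor inaccuracy: both lost orders per iteration occur in the passage $\cV^{(m)}\to\Phi^{(m)}$ through $\partial^2_\mu$, while Theorem~\ref{eq:generalisationmainresult} preserves the regularity class; your final count $\cV^{(m)}\in\cM_{2k-2m+3}$ is nonetheless correct and matches the paper.
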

\begin{proof}
We commence the proof by noting that  $\Phi$, $b$ and $\sigma$ all belong to $\cM_{2k+1}$, therefore it follows from Theorem \ref{eq:generalisationmainresult} that $\cV^{(1)} \in \cM_{2k+1}$. As in the proof of Theorem \ref{eq:main result with regularity}, we prove by induction on $m \in \{1, \ldots, k \}$ in order to establish that for each $m \in \{1, \ldots, k \}$, $\cV^{(m)} \in \cM_{2k-2m+3}$.
By Theorem \ref{eq:main result with regularity}, Definition \ref{de expansion function} is well-posed for $m \in \{1, \ldots, k\}$. Therefore, by Theorem  \ref{eq:mainresultwithoutregularity}, we have 
\begin{equation}
\esp{\Phi(\nlaw[T])} - \Phi (\law[X^{0, \xi}_T])
= \sum_{j=0}^{k-1} \frac1{N^j}\left(C_j +  \cI^N_{j+1} \right) + O(\frac1{N^k}), \label{eq: full expansion part i}
\end{equation}
for some constants $C_0 = 0, C_1, \ldots, C_{k-1}>0$, where
\[ \begin{cases} 
      \cI^N_1 :=  \esp{ {\cV}(0,\nlaw[0]) - {\cV}(0,\law[\xi])},  \\
       \\
      \cI^N_{j+1} := \int_{\Delta^{j}_T}\left(\esp{ \cV^{(j+1)}((\tau,0),\nlaw[0])} - \cV^{(j+1)}((\tau,0),\law[\xi]) \right) \ud \tau, \quad  \text{ for } j \in \{ 1, \ldots, k-1 \}. 
   \end{cases}
\]
Recall that $\cV^{(1)} \in \cM_{2(k+1)-1}$. By Remark \ref{re poly growth}(i) and Theorem \ref{ theorem second order expansion}, 
\begin{equation}
  \cI^N_1 = \esp{ {\cV}(0,\nlaw[0]) - {\cV}(0,\law[\xi])}
= \sum_{\ell=1}^{k-1} \frac{C^{(1)}_\ell}{N^\ell} + O(\frac1{N^k}),  
\label{eq: full expansion part ii}
\end{equation}
for some constants $C^{(1)}_1, \ldots, C^{(1)}_{k-1} >0$. Similarly, for every $j \in \{1, \ldots, k-1 \}$, since  $\cV^{(j+1)} \in \cM_{2(k-j+1)-1}$, it also follows  by Remark \ref{re poly growth}(i) and Theorem \ref{ theorem second order expansion} \footnote{Note that for $U \in \cM_{2(k-j+1)-1}(\Delta^{j+1}_T \times \mathcal{P}_2 ( \bR^d))$, the constant $C$ in  Lemma \ref{le polynomial growth} is uniform in $\mathbf{t} \in \Delta^{j+1}_T$.  Therefore, the constant $C$ in the same inequality \eqref{eq: poly growth second expansion} in Theorem \ref{ theorem second order expansion} is also uniform in $\mathbf{t} \in \Delta^{j+1}_T$. The fact that the constants $C^{(j)}_1, \ldots, C^{(j)}_{k-j-1}$ are well-defined follows from a similar argument as the first part of the proof of Theorem \ref{eq:mainresultwithoutregularity}.} that 
\begin{equation}
\cI^N_{j+1} 
= \sum_{\ell=1}^{k-j-1} \frac{C^{(j)}_\ell}{N^\ell} + O(\frac1{N^{k-j}}), \label{eq: full expansion part iii}
\end{equation}
for some constants $C^{(j)}_1, \ldots, C^{(j)}_{k-j-1} >0$. The result follows by combining \eqref{eq: full expansion part i}, \eqref{eq: full expansion part ii} and \eqref{eq: full expansion part iii}.
\end{proof}

\section{Proof of Theorem \ref{eq:generalisationmainresult}}
Let $\Phi: \cP_2( \bR^d) \to \bR$ be a Borel-measurable function. In this section, we study the smoothness of the function $\cV: [0,t] \times \cP_2( \bR^d) \to \bR$ defined by 
\[
\cV(s, \mu) = \Phi( \law[{X^{s, \mu}_{t}}] ).
\]
There are various methods of establishing smoothness of functions of this form in the literature. One way involves considering PDE \eqref{eq pde measure} and proving regularity properties of the solution to this PDE (\cite{cardaliaguet2015master}).

The method of Malliavin calculus is adopted in \cite{crisan2017smoothing}. This paper proves smoothness of $\cV$, for $\Phi$ being in the form
$$ \Phi (\mu) = \int_{\bR^d} \zeta(y) \,  \mu (dy), $$ 
where $\zeta: \bR^d \to \bR$ is infinitely differentiable with bounded partial derivatives.

Article \cite{de2015strong} considers the method of parametrix. We represent $\cV$  in terms of the transition density $p(s,\mu;t',y';t,y)$ of $X^{s, x, \mu}_t$ (defined below in \eqref{eq:McKeanflow1'}). This method is applied to the case in which $b$ and $\sigma$ are of the form
    $$ b(x, \mu) = \int_{\bR^d} B(x,y) \mu (dy), \quad \quad \sigma(x, \mu) = \int_{\bR^d} \Sigma(x,y) \mu (dy),$$ 
    for some functions  $B: \bR^d \times \bR^d \to \bR^d$ and $\Sigma: \bR^d \times \bR^d  \to \bR^d \otimes \bR^d$.
Nonetheless, it is not clear whether this method can be applied to $b$ and $\sigma$ with more general forms.

\vspace{2mm}
We follow here a different route.

\vspace{2mm}
  
\noindent \emph{\textbf{Framework of analysis.}}
We adopt the `variational' approach employed in \cite{buckdahn2017mean}. The core idea is to prove smoothness of $\cV$ by viewing the lift of $\cV$ as a composition of the map $\xi \mapsto X^{s, \xi}_{t}$ and the lift of $\Phi$. As \cite{buckdahn2017mean} already proves smoothness of derivatives in measure up to the second order, we generalise that result to an arbitrary order. 

The analysis of variational derivatives of solutions to classical SDEs is rather well-understood in the literature (\cite{friedman2012stochastic}, \cite{krylov1999kolmogorov}). As differentiation in the direction of measure leads to rather complicated expressions, we restrict ourselves to the following special case in this section. This captures the key difficulty of this approach. The general case can be handled in an analogous way.

We consider the forward system $\big(\{X^{s, \xi}_t \}_{t \in [s,T]}, \{X^{s,x,\mu}_t \}_{t \in [s,T]} \big) $, $\xi \sim \mu$, which takes the form 
 \begin{numcases}{} 
      X^{s, \xi}_t & = $\xi + \int_s^t \sigma(\law[X^{s, \xi}_r]) \,dW_r, \quad t \in [s,T]$,  \label{eq:Mckean flow special}  \\
      & \nonumber \\ 
      X^{s, x,\mu}_t & = $x +  \int_s^t  \sigma ( \law[{X^{s,  \mu}_r}])  \,dW_r, \quad t \in [s,T], \quad x \in \bR,$  \label{eq:McKeanflow1'}
   \end{numcases}
for some Borel-measurable function $\sigma: \cP_2(\bR) \to \bR$ and one-dimensional Brownian motion $W$. $\{X^{s, x,\xi}_t \}_{t \in [s,T]}$ is also called the
 decoupled process, as it no longer depends on the law of itself. 

For any sub-$\sigma$-algebra $\cG$, let $L^2(\cG)$ denote the set of all random variables in $L^2(\Omega, \cG, \bP; \bR^d)$. 
Let $\{ \cF_t \}_{t \in [0,T]}$ (resp. $\{ \cF^{(n)}_t \}_{t \in [0,T]}$)  denote the filtration generated by Brownian motion $W=\{ W_t \}_{t \in [0,T]}$ (resp. $\{ W^{(n)}_t \}_{t \in [0,T]}$). Let $\xi$ be a random variable in $L^2 ( \mathcal{F}_s)$. For simplicity of notations, in the following calculations, we shall denote the law $\law[\xi]$ by $[\xi]$. \\
\emph{\textbf{First order derivative of }$[\xi] \mapsto X^{s,x,[\xi]}$. }
We start our analysis by analysing the smoothness of the map $[\xi] \mapsto X^{s,x,[\xi]}_t$.  
Suppose that the lift of $[\xi] \mapsto X^{s,x,[\xi]}_t$ with values in $L^2$
$$ L^2 ( \mathcal{F}_s) \to L^2 ( \mathcal{F}_t) ; \quad \xi \mapsto X^{s,x,[\xi]}_t $$
is Fr\'{e}chet differentiable with its Fr\'{e}chet derivative given by
$$ L^2 ( \mathcal{F}_s)  \to L(L^2 ( \mathcal{F}_s),L^2 ( \mathcal{F}_t)) ; \quad \xi \mapsto \bigg( \eta \mapsto  {\hat{\bE}} \big[  U^{s,x,[\xi]}_t ({\hat{\xi}})    {\hat{\eta}} \big] 
 \bigg) ,$$  
 for some real-valued process 
 $\{U^{s,x,[\xi]}_t (y) \}_{t \in [s,T]}$ that is adapted to $\{ \cF_t \}_{t \in [s,T]}$. Then we define the derivative of $ X^{s,x,[\xi]}_t$ with respect to the measure component by
\begin{equation} \pmu X^{s,x,[\xi]}_t (y) := U^{s,x,[\xi]}_t (y), \quad \quad t \in [s,T], \quad  x,y \in \bR. \label{eq:deriinmeasureFrechetdef} \end{equation}
 The next theorem computes $\pmu X^{s,x,[\xi]}_t (y)$ explicitly.
 \begin{theorem} \label{eq: first order regularity X} 
  Suppose that $\sigma \in \cM_1 (\cP_2(\bR))$. Then $\pmu X^{s,x,[\xi]}_t (y)$ exists and is the unique solution of the SDE \[
\pmu X^{s,x,[\xi]}_t (y)  =   \int_s^t {\bE}^{(1)} \bigg[ (\pmu \sigma)\Big(  [{X^{s,\xi}_r}], \big({X^{(1)}} \big)^{s, y, [{\xi}]}_r \Big)  +  (\pmu \sigma)\Big(  [{X^{s,\xi}_r}], \big({X^{(1)}} \big)^{s,{\xi^{(1)}}}_r \Big) \pmu \big({X^{(1)}} \big)^{s, x, [{\xi}]}_r (y) \bigg] \,dW_r.
\]
 \end{theorem}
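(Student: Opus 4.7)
The plan is to follow the variational approach of Theorem 7.2 in \cite{buckdahn2017mean}, adapted to the decoupled setting: show that the $L^2$-lift $L^2(\cF_s) \ni \xi \mapsto X^{s,x,[\xi]}_t \in L^2(\cF_t)$ is Fr\'echet differentiable, identify its Fr\'echet derivative with a Riesz-type kernel $\eta \mapsto \hat\bE^{(1)}[U^{s,x,[\xi]}_t(\hat\xi)\hat\eta]$, and invoke \eqref{eq:deriinmeasureFrechetdef} to conclude that $U = \partial_\mu X^{s,x,[\xi]}$. Given $\eta \in L^2(\cF_s)$ and $\theta \in (0,1]$, I set $\xi^\theta := \xi + \theta\eta$ and study the difference quotient
\[
\Delta^{\theta,x}_t \;:=\; \tfrac{1}{\theta}\bigl(X^{s,x,[\xi^\theta]}_t - X^{s,x,[\xi]}_t\bigr) \;=\; \tfrac{1}{\theta}\int_s^t \bigl[\sigma([X^{s,\xi^\theta}_r]) - \sigma([X^{s,\xi}_r])\bigr]\,dW_r.
\]
The goal is to prove $L^2$-convergence of $\Delta^{\theta,x}_t$ to a limit $V^{\eta,x}_t$ that is linear and continuous in $\eta$, which by Riesz representation takes the desired kernel form.

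The candidate kernel $U^{s,x,[\xi]}_t(y)$ is introduced as the unique solution of the linear SDE in the statement. This equation is a Wiener integral of a linear-in-$U$ functional whose coefficient $\partial_\mu\sigma$ is bounded and Lipschitz by $\cM_1$; existence and uniqueness in $\cS^2([s,T])$ follow by Picard iteration on $y$-parametrised families of adapted processes, using It\^o's isometry and summability of the iterated kernels. To derive this equation rigorously, one needs to identify where each ingredient comes from. Since $\sigma$ in \eqref{eq:McKeanflow1'} depends only on the law of the coupled process, differentiating in $[\xi]$ requires, through the chain rule, the derivative of $\xi \mapsto X^{s,\xi}_r$ itself, which in the present setting (where $\partial_x X^{s,x,[\xi]}_r \equiv 1$) decomposes as $D_\eta X^{s,\xi}_r = \eta + \hat\bE^{(1)}[U^{s,\xi,[\xi]}_r(\hat\xi)\hat\eta]$. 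The first summand yields the source term $(\partial_\mu\sigma)([X^{s,\xi}_r], (X^{(1)})^{s,y,[\xi]}_r)$ in the theorem, and the second yields the feedback term $(\partial_\mu\sigma)([X^{s,\xi}_r], (X^{(1)})^{s,\xi^{(1)}}_r)\,\partial_\mu (X^{(1)})^{s,x,[\xi]}_r(y)$.

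To pass from the candidate to the actual Fr\'echet derivative, I would expand $\sigma([X^{s,\xi^\theta}_r]) - \sigma([X^{s,\xi}_r])$ at first order in the measure, either via Lemma \ref{le easy expansion} combined with \eqref{eq: connection lions functional derivatives high order}, or equivalently via Fr\'echet differentiability of the $L^2$-lift of $\sigma$ with kernel $\partial_\mu\sigma$. Plugging in the coupled-derivative decomposition of the previous paragraph, the leading contribution of $\Delta^{\theta,x}_t$ matches exactly $\hat\bE^{(1)}[U^{s,x,[\xi]}_t(\hat\xi)\hat\eta]$. The remainder is controlled via a Gronwall argument, using the standard $L^2$ stability estimate $\bE|X^{s,\xi^\theta}_t - X^{s,\xi}_t|^2 \le C\theta^2 \bE|\eta|^2$ (itself from Gronwall applied to \eqref{eq:Mckean flow special} and the Lipschitz property of $\sigma$) together with the Lipschitz continuity of $\partial_\mu\sigma$ inherited from $\cM_1$. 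This yields $\|\Delta^{\theta,x}_t - \hat\bE^{(1)}[U^{s,x,[\xi]}_t(\hat\xi)\hat\eta]\|_{L^2} \to 0$ as $\theta \downarrow 0$, proving Fr\'echet differentiability with kernel $U$.

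The hard part is the implicit structure of the SDE for $U$: the unknown $\partial_\mu X^{s,x,[\xi]}_r(y)$ reappears \emph{inside} the It\^o integral on the right-hand side, in the guise of its copy on $\Omega^{(1)}$. This reflects the fact that perturbing $[\xi]$ affects both $\sigma$ directly and the coupled process $X^{s,\xi}_r$ whose law feeds into $\sigma$; only after carefully tracking which independent copy $(\Omega^{(n)},\cF^{(n)},\bP^{(n)})$ each factor lives on does the source/feedback decomposition become transparent and does the integrand of the It\^o integral remain $\cF_r$-adapted after the $\bE^{(n)}$-expectations are taken. Once this bookkeeping is settled, the overall linearity of the equation together with the boundedness and Lipschitz properties of $\partial_\mu\sigma$ inherited from $\cM_1$ render the Picard and Gronwall estimates routine.
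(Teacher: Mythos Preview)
Your proposal is correct and follows essentially the same variational approach as the paper's proof: both compute the directional derivative of the lift $\xi \mapsto X^{s,x,[\xi]}_t$ via the chain-rule decomposition $D_\eta X^{s,\xi}_r = \eta + \hat\bE[U(\hat\xi)\hat\eta]$, identify the kernel $U$ with the stated linear SDE through the independence/Fubini bookkeeping on the copies $(\Omega^{(n)},\cF^{(n)},\bP^{(n)})$, and upgrade to Fr\'echet differentiability via the $\cM_1$ bounds and Gronwall. The only organisational difference is that the paper first derives an SDE for the directional derivative $D_\xi(X^{s,x,[\xi]}_t)(\eta)$ (citing Lemma~4.2 of \cite{buckdahn2017mean} for the rigorous justification) and then separately shows $\hat\bE[U^{s,x,[\xi]}_t(\hat\xi)\hat\eta]$ satisfies the same SDE, whereas you merge these two steps by directly matching the first-order expansion of the difference quotient with the kernel representation; the content is equivalent.
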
 
 \begin{proof}
 The proof is done in \cite{buckdahn2017mean}, but is included for completeness. 
 We first define the $L^2$-directional derivative $D_{\xi} \big( X^{s,x,[\xi]}_t \big)(\eta)$  of $X^{s,x,[\xi]}_t$ in direction $\eta \in L^2( \cF_s)$, given by
\begin{equation} D_{\xi} \big( X^{s,x,[\xi]}_t \big)(\eta):= \lim_{h \to 0} \frac{1}{h} \Big(X^{s,x,[\xi+h \eta]}_t - X^{s,x,[\xi]}_t \Big), \label{eq:directionderidecouple} \end{equation}
where the limit is interpreted in the $L^2$ sense, i.e.
$$ \lim_{h \to 0} \bE \bigg[ \bigg( \frac{1}{h} \Big(X^{s,x,[\xi+h \eta]}_t - X^{s,x,[\xi]}_t  \Big) - D_{\xi} \big( X^{s,x,[\xi]}_t \big)(\eta) \bigg)^2 \bigg] =0. $$ 
Similarly, the $L^2$-directional derivative of $X^{s,\xi}_t$ in direction $\eta \in L^2( \cF_s)$ is given by
\begin{equation}\lim_{h \to 0} \frac{1}{h} \Big(X^{s,\xi+h \eta}_t - X^{s,\xi}_t  \Big) = \partial_h X^{s, \xi + h \eta}_t \bigg|_{h=0} , \label{eq:directionalderiMK} \end{equation}
where both the limit and the derivative are interpreted in the $L^2$ sense.
We proceed by formal differentiation and obtain that
\begin{eqnarray}
\partial_h X^{s, \xi + h \eta}_t & = & \partial_h \bigg( X^{s, x, [{\xi + h\eta}]}_t \bigg|_{x= \xi + h\eta}  \bigg) \nonumber \\
& = & \bigg( \partial_x X^{s, x, [{\xi + h\eta}]}_t \bigg|_{x= \xi + h\eta} \bigg) \eta + \bigg( \lim_{\nu \to 0} \frac{1}{\nu}  \Big(X^{s,x,[{\xi+(h + \nu)\eta }]}_t - X^{s,x,[{\xi+h \eta}]}_t  \Big) \bigg) \bigg|_{x= \xi + h\eta}. \nonumber 
\end{eqnarray}
Hence,
\begin{equation}
    D_{\xi} \big( X^{s,\xi}_t \big)(\eta)  = \partial_h X^{s, \xi + h \eta}_t \bigg|_{h=0} =  \eta +  D_{\xi} \big( X^{s,x,[\xi]}_t \big)(\eta) \bigg|_{x= \xi}  .
    \label{eq:formaldifXs,xi}
    \end{equation}
Recall that the lift of $\sigma$, i.e. $\widetilde{\sigma}: L^2( \mathcal{F}) \to \bR$, is  defined by
$ \widetilde{\sigma}( \theta) := \sigma([{\theta}]).$ 
By \eqref{eq:directionderidecouple}, \eqref{eq:directionalderiMK},  and \eqref{eq:formaldifXs,xi}, formal differentiation of \eqref{eq:McKeanflow1'} with respect to $\xi$ in the direction $\eta$ gives
\begin{equation}
D_{\xi} \big( X^{s,x,[\xi]}_t \big)(\eta)  =  \int_s^t (D_{\theta} \widetilde{\sigma}) (X^{s,\xi}_r) \Big( \eta +D_{\xi} \big( X^{s,x,[\xi]}_r \big)(\eta) \bigg|_{x= \xi} \Big) \,dW_r. \label{eq:couplingYsxxi1}
\end{equation}
By the definition of derivative in measure of $\sigma$, we can further rewrite \eqref{eq:couplingYsxxi1} as 
\begin{equation}
D_{\xi} \big( X^{s,x,[\xi]}_t \big)(\eta) =  \int_s^t {\bE}^{(1)} \bigg[ (\pmu \sigma)\Big( [{X^{s,\xi}_r}], \big( {X^{(1)}} \big)^{s, {\xi^{(1)}}}_r \Big)\Big( {\eta^{(1)}} + D_{\xi} \big( \big( {{X}^{(1)}} \big)^{s,x,[\xi]}_r \big)(\eta^{(1)}) \bigg|_{x= {\xi}^{(1)}} \Big) \bigg] \,dW_r. \label{eq:couplingYsxxi1'}
\end{equation}
It is then verified rigorously in Lemma 4.2 of \cite{buckdahn2017mean} that $D_{\xi} \big( X^{s,x,[\xi]}_t \big)(\eta) $ is indeed the directional derivative of $X^{s,x,[\xi]}_t$ in direction $\eta \in L^2(\cF_s)$, by using the fact that $\sigma$ is in $\cM_1$.

The next step involves the consideration of a process $\{U^{s,x,[{\xi}]}_t \}_{t \in [s,T]}$ satisfying the SDE 
\begin{equation}
U^{s,x,[\xi]}_t (y)  =   \int_s^t {\bE}^{(1)} \Big[ (\pmu \sigma)\Big(  [{X^{s,\xi}_r}], \big({X^{(1)}} \big)^{s, y, [{\xi}]}_r \Big)  +  (\pmu \sigma)\Big(  [{X^{s,\xi}_r}], \big({X^{(1)}} \big)^{s,{\xi^{(1)}}}_r \Big)  \big({U^{(1)}} \big)^{s, x, [{\xi}]}_r (y) \Big|_{x=\xi^{(1)}} \Big] \,dW_r.
\label{eq:couplingUsxxi1} 
\end{equation}
We write 
\begin{eqnarray}
 &&  \hat{\bE} \bigg[U^{s,x,[{\xi}]}_t ( \hat{\xi}) \hat{\eta}\Big] \nonumber \\
 & = & \int_s^t \hat{\bE} \bigg[  {\bE}^{(1)} \bigg[ (\pmu \sigma)\Big(  [{X^{s,\xi}_r}], \big({X^{(1)}} \big)^{s, y, [{\xi}]}_r \Big) \bigg] \bigg|_{y= \hat{\xi}} \hat{\eta} \bigg] \,dW_r  \nonumber \\
 && + \int_s^t \hat{\bE} \bigg[  {\bE}^{(1)} \bigg[ (\pmu \sigma)\Big(  [{X^{s,\xi}_r}], \big({X^{(1)}} \big)^{s,{\xi^{(1)}}}_r \Big)  \big({U^{(1)}} \big)^{s, x, [{\xi}]}_r (\hat{\xi})  \bigg] \bigg|_{x=\xi^{(1)}} \hat{\eta} \bigg] \, dW_r \nonumber 
 \end{eqnarray}
 and notice that
 \begin{eqnarray}
&&  \hat{\bE} \bigg[  {\bE}^{(1)} \bigg[ (\pmu \sigma)\Big(  [{X^{s,\xi}_r}], \big({X^{(1)}} \big)^{s, y, [{\xi}]}_r \Big)  \bigg] \bigg|_{y= \hat{\xi}} \hat{\eta} \bigg] \nonumber \\ 
 &= &   {\bE}^{(1)}  \bigg[  {\bE}^{(1)} \bigg[ (\pmu \sigma)\Big(  [{X^{s,\xi}_r}], \big({X^{(1)}} \big)^{s, y, [{\xi}]}_r \Big)   \bigg] \bigg|_{y= {\xi}^{(1)}} {\eta^{(1)}} \bigg] \nonumber \\
 & = & {\bE}^{(1)}  \bigg[  {\bE}^{(1)} \bigg[  (\pmu \sigma)\Big(  [{X^{s,\xi}_r}], \big({X^{(1)}} \big)^{s, y, [{\xi}]}_r \Big)  \bigg|_{y= {\xi}^{(1)}} {\eta^{(1)}}  \bigg| \cF^{(1)}_s \bigg] \bigg] \nonumber \\
 & = & {\bE}^{(1)}  \bigg[  (\pmu \sigma)\Big(  [{X^{s,\xi}_r}], \big({X^{(1)}} \big)^{s, {\xi}^{(1)} }_r \Big) {\eta^{(1)}} \bigg], \label{ U directional derivative equality independence argument} 
  \end{eqnarray}
  where the second equality uses the fact that $\big({X^{(1)}} \big)^{s, y, [{\xi}]}$ is $\sigma \{ W^{(1)}_r -W^{(1)}_s \, | \,  r \in [s,t] \} $-adapted and is therefore independent of $\cF^{(1)}_s$, whereas ${\xi}^{(1)}$ and  ${\eta^{(1)}}$ are both $\cF^{(1)}_s$-measurable. The final equality uses the fact that $\big({X^{(1)}} \big)^{s, y, [{\xi}]}_r   \big|_{y= {\xi}^{(1)}} = \big({X^{(1)}} \big)^{s, {\xi}^{(1)} }_r$. We also notice by the Fubini's theorem that
  \begin{eqnarray}
   && \hat{\bE} \bigg[  {\bE}^{(1)} \bigg[ (\pmu \sigma)\Big(  [{X^{s,\xi}_r}], \big({X^{(1)}} \big)^{s,{\xi^{(1)}}}_r \Big)  \big({U^{(1)}} \big)^{s, x, [{\xi}]}_r ( \hat{\xi})  \bigg] \bigg|_{{x=\xi^{(1)}}} \hat{\eta} \bigg] \nonumber \\
     & = &   {\bE}^{(1)} \Big[ (\pmu \sigma)\Big(  [{X^{s,\xi}_r}], \big({X^{(1)}} \big)^{s,{\xi^{(1)}}}_r \Big)  \hat{\bE} \Big[ \big({U^{(1)}} \big)^{s, x, [{\xi}]}_r (\hat{\xi})  \hat{\eta} \Big] \Big|_{{x=\xi^{(1)}}}  \Big]. \label{ U directional derivative equality fubini} 
   \end{eqnarray}
   Therefore, by \eqref{ U directional derivative equality independence argument}  and \eqref{ U directional derivative equality fubini}, we observe that  $D_{\xi} \big( X^{s,x,[\xi]} \big)(\eta)$ and $\hat{\bE} \big[U^{s,x,[{\xi}]} ( \hat{\xi}) \hat{\eta}\big]$ satisfy the same SDE and hence 
\begin{equation}
 D_{\xi} \big( X^{s,x,[\xi]}_t \big)(\eta) = \hat{\bE} \Big[U^{s,x,[{\xi}]}_t ( \hat{\xi}) \hat{\eta}\Big], \quad t \in [s,T], \quad \eta \in L^2(\cF_s). \label{eq:relationYandU}
\end{equation}
We then observe that $U^{s,x,[{\xi}]}_t (y)$ satisfies the same SDE for any $x \in \bR$. Therefore, there is no dependence on $x$ and hence \eqref{eq:couplingUsxxi1} can be rewritten as
\begin{equation}
U^{s,x,[\xi]}_t (y)  =   \int_s^t {\bE}^{(1)} \bigg[ (\pmu \sigma)\Big(  [{X^{s,\xi}_r}], \big({X^{(1)}} \big)^{s, y, [{\xi}]}_r \Big)  +  (\pmu \sigma)\Big(  [{X^{s,\xi}_r}], \big({X^{(1)}} \big)^{s,{\xi^{(1)}}}_r \Big)  \big({U^{(1)}} \big)^{s, x, [{\xi}]}_r (y)  \bigg] \,dW_r. \label{eq: U final formula} 
\end{equation}
Moreover, by the fact that  $\sigma$ is in $\cM_1$, we establish that 
\begin{enumerate}[(i)]
\item \begin{equation} \bE \bigg[ \sup_{t \in [s,T]} \big| U^{s,x,[{\xi}]}_t (y) \big|^2 \bigg] \leq C,  \label{eq: bound regularity} \end{equation}
\item \begin{equation}  \bE \bigg[ \sup_{t \in [s,T]} \big| U^{s,x,[{\xi}]}_t (y) -U^{s,x,[{\xi'}]}_t (y')  \big|^2 \bigg] \leq C \bigg(  |y-y'|^2 + W_2 ([{\xi}],[{\xi'}])^2 \bigg), \label{eq: Lipschitz regularity} \end{equation} 
\end{enumerate}
for any $s \in [0,T]$, $x,y,y' \in \bR$ and $\xi, \xi' \in L^2 (\cF_s)$, for some constant $C>0$. Indeed, \eqref{eq: bound regularity} follows from the boundedness of $\pmu \sigma$ and Gronwall's inequality. \eqref{eq: Lipschitz regularity} follows from the Lipschitz property of $\pmu \sigma$ and Gronwall's inequality, along with the bounds
\[ \begin{cases} 
      \bE \big[ \sup_{t \in [s,T]} \big|X^{s, \xi}_t -  X^{s, \xi'}_t \big|^2 \big] \leq C \bE |\xi - \xi'|^2, & \\
       \sup_{t \in [s,T]} W_2( [X^{s,\xi}_t], [X^{s,\xi'}_t] )^2 \leq C W_2 ( [\xi],[\xi'])^2,  &\\
      \bE \big[ \sup_{t \in [s,T]} \big|X^{s, x, [\xi]}_t -  X^{s, x',[\xi']}_t \big|^2 \big] \leq C \big( |x-x'|^2 + W_2 ( [\xi],[\xi'])^2 \big),   &    \end{cases}
\]
for some constant $C>0$. 
Finally, the bounds \eqref{eq: bound regularity}, \eqref{eq: Lipschitz regularity}  and connection \eqref{eq:relationYandU} allow us to establish that the G\^{a}teaux  derivative
\begin{equation} L^2 ( \mathcal{F}_s)  \to L(L^2 ( \mathcal{F}_s),L^2 ( \mathcal{F}_t)) ; \quad \xi \mapsto \bigg(  \eta \mapsto  D_{\xi} \big( X^{s,x,[\xi]}_t \big)(\eta) \bigg)
 \label{eq:Gateauxbasestep} \end{equation}
is continuous (where the space $L(L^2 ( \mathcal{F}_s),L^2 ( \mathcal{F}_t))$ is equipped with the corresponding operator norm), which proves that \eqref{eq:Gateauxbasestep} is indeed the Fr\'{e}chet derivative of $X^{s,x,[{\xi}]}_t$ with respect to $\xi$. By \eqref{eq:relationYandU}, it follows from the definition of $\pmu X^{s,x,[{\xi}]}_t (y)$ that
\begin{equation} \pmu X^{s,x,[{\xi}]}_t (y) = U^{s,x,[{\xi}]}_t (y), \quad \quad t \in [s,T].  \label{U equals pmu} \end{equation}
 \end{proof}
$ {}$ \\ \emph{\textbf{Higher order derivatives of }$[\xi] \mapsto X^{s,x,[\xi]}$. }  We recall that $\pmu X^{s,x,[{\xi}]}_t (y)$ does not depend on $x$ and hence we define
\begin{equation} \pmu X^{s,[\xi]}_t (y) := \pmu X^{s,x,[{\xi}]}_t (y). \label{no dependence on x} \end{equation}
Subsequently, we define  inductively as in \eqref{eq:generalformulaintro} and \eqref{eq:generalformulamixed},  the $n$th order derivative in measure of $X^{s,x,[{\xi}]}_t$ by
$$ \pnmu X^{s,[{\xi}]}_t (v_1, \ldots, v_n):= \partial^{n-1}_{\mu} \bigg( \pmu X^{s,[{\xi}]}_t (v_1) \bigg) (v_2, \ldots, v_n), \quad \quad t \in [s,T], \, v_1, \ldots, v_n \in \bR^d, $$
and its corresponding mixed derivatives by
$$ \partial^{\beta_n}_{v_n} \ldots \partial^{\beta_1}_{v_1}  \pnmu X^{s,[\xi]}_t (v_1, \ldots, v_n), \quad \quad \ell, \beta_1, \ldots, \beta_n \in \bN \cup \{0 \},$$ 
provided that these derivatives actually exist, where each derivative in $v_i$ is interpreted in the $L^2$ sense. (See Lemma 4.1 in \cite{buckdahn2017mean} for its precise meaning.)

Next, we generalise the multi-index notation and the class $\cM_k$ to include derivatives of $X^{s,x, [{\xi}]}_t$.

\begin{definition}[Multi-index notation for derivatives of $X^{s,x, [{\xi}]}_t$]
Let $(n,\bm{\beta})$ be a {multi-index}. Then $D^{(n, \bm{\beta})} X^{s,[\xi]}_t (v_1, \ldots, v_n)$ is defined by 
$$ D^{(n,\bm{\beta})} X^{s,[\xi]}_t (v_1, \ldots, v_n):=  \partial^{{\beta}_n}_{v_n} \ldots \partial^{{\beta}_1}_{v_1}  \pnmu X^{s,[\xi]}_t  (v_1, \ldots, v_n), $$
if this derivative is well-defined.
\end{definition}

\begin{definition}[Class $\cM_k$  of $k$th order differentiable functions of $X^{s,x,[{\xi}]}$] ${}$ \label{eq:classmkX} \\
The process $X^{s,x,[{\xi}]}= \{ X^{s,x,[{\xi}]}_t  \}_{t \in [s, T]}$ belongs to class $\cM_k(X^{s,x,[{\xi}]}) $, if $D^{(n, \bm{\beta})} X^{s,[\xi]}_t (v_1, \ldots, v_n)$ exists for every multi-index $(n,\bm{\beta})$ such that $|(n,\bm{\beta})| \leq k$ and 
\begin{enumerate}[(a)]
\item \begin{equation} \bE \bigg[ \sup_{t \in [s,T]} \big| D^{(n,\bm{\beta})} X^{s,[\xi]}_t (v_1, \ldots, v_n) \big|^2 \bigg] \leq C, \label{eq:boundFrechet} \end{equation}
\item \begin{eqnarray} 
&& \bE \bigg[ \sup_{t \in [s,T]} \big| D^{(n,\bm{\beta})} X^{s,[\xi]}_t (v_1, \ldots, v_n) -D^{(n,\bm{\beta})} X^{s,[\xi']}_t (v'_1, \ldots, v'_n)  \big|^2 \bigg] \nonumber \\
& \leq & C \bigg(  \sum_{i=1}^n|v_i-v'_i|^2 + W_2 ([{\xi}],[{\xi'}])^2 \bigg), \label{eq:LipFrechet}
\end{eqnarray}
\end{enumerate}
for any $s \in [0,T]$, $v_1,v'_1,\ldots, v_n, v'_n \in \bR^d$ and $\xi, \xi' \in L^2 (\cF_s)$, for some constant $C>0$.
\end{definition}
The following theorem extends Theorem \ref{eq: first order regularity X} to higher order derivatives. It uses the notations
$$ \Lambda_{i,k} := \Big\{ \theta: \{1, \ldots , i \} \to \{1, \ldots, k \} \Big| \, \, \theta \text{ is a strictly increasing function} \Big\}, \quad \quad i \in \{1, \ldots, k \},$$
and
$$R_k:= \Big\{ y= \big( {{y}}_{(j, \ell)} \big)_{ 1 \leq j, \ell \leq k}  \, \Big| \, {{y}}_{(j, \ell)} \in \bR \Big\}, \quad T_k := \Big\{  z= \big( {{z}}_{(j,i,\theta)} \big)_{\substack{1 \leq j,i \leq {k}\\ \theta \in \Lambda_{i,{k}}}}  \, \Big| \, {{z}}_{(j,i,\theta)}  \in \bR  \Big\}.$$ 
For any function $F_k: \cP_2(\bR) \times \bR^k \times R_k \times T_k \to \bR$,
$\partial_{x_{j}} {F}_{k}$ denotes the corresponding partial derivative with respect to the second component of ${F}_{k}$. $\partial_{y_{(j,\ell)}} {F}_{k}$ denotes the corresponding partial derivative with respect to the third component of ${F}_{k}$.  $\partial_{z_{(j,i,\theta)}} {F}_{k} $ denotes the corresponding partial derivative with respect to the fourth component of ${F}_{k}$. 
\begin{theorem}  \label{eq:purederimeasure}  Suppose that $\sigma$ is $\cM_K( \cP_2(\bR))$. Then,
   for any $k \in \{1, \ldots, K \},$  $t \in [s,T],$ the $k$th order derivative in measure $\partial^{k}_{\mu} X^{s,[\xi]}_t (v_1, \ldots, v_k) $ exists and satisfies \eqref{eq:boundFrechet} and \eqref{eq:LipFrechet}. In particular, it is the unique solution of an SDE given by
   \begin{eqnarray}
\partial^{k}_{\mu} X^{s, [\xi]}_t (v_1, \ldots, v_{k}) & = & \int_s^t \bE^{(1)} \bE^{(2)} \ldots \bE^{({k})} \bigg[ {F}_{k} \bigg( [X^{s, \xi}_r], \Big(  \big( X^{(j)} \big)^{s, {\xi}^{(j)}}_r  \Big)_{ 1 \leq j \leq {k}} ,  \Big(  \big( X^{(j)} \big)^{s, v_{\ell}, [\xi]}_r  \Big)_{ 1 \leq j,\ell \leq {k}},  \nonumber \\
&& \Big( \partial^i_{\mu} \big( X^{(j)} \big)^{s, [\xi] }_r (v_{\theta(1)}, \ldots, v_{\theta(i)}) \Big)_{\substack{1 \leq j,i \leq {k}\\ \theta \in \Lambda_{i,{k}}}} \bigg) \bigg] \,dW_r,  \label{eq: general formula kth derivative X}   
\end{eqnarray}
where $F_k: \cP_2(\bR) \times \bR^k \times R_k \times T_k \to \bR$ is defined by the recurrence relation 
\begin{eqnarray}
  && F_{k+1} \Big( \mu, (x_j)_{1 \leq j \leq k+1}, (y_{(j,\ell)} )_{1 \leq j, \ell \leq k+1}, \big( {{z}}_{(j,i,\theta)} \big)_{\substack{1 \leq j,i \leq {k+1}\\ \theta \in \Lambda_{i,{k+1}}}} \Big)  \nonumber \\
  & = & \pmu F_{k} \Big( \mu, (x_j)_{1 \leq j \leq k}, (y_{(j,\ell)} )_{1 \leq j, \ell \leq k}, \big( {{z}}_{(j,i,\theta)} \big)_{\substack{1 \leq j,i \leq {k}\\ \theta \in \Lambda_{i,{k}}}} , y_{(k+1,k+1)}\Big) \nonumber \\
  && + \pmu F_{k} \Big( \mu, (x_j)_{1 \leq j \leq k}, (y_{(j,\ell)} )_{1 \leq j, \ell \leq k}, \big( {{z}}_{(j,i,\theta)} \big)_{\substack{1 \leq j,i \leq {k}\\ \theta \in \Lambda_{i,{k}}}} , x_{k+1}\Big)  {{z}}_{(k+1,1,P_{k+1})} \nonumber \\
  &&  + \sum_{j=1}^k \partial_{x_j} F_{k} \Big( \mu, \big( x_1, \ldots, x_{j-1}, y_{(j,k+1)}, x_{j+1}, \ldots, x_k \big) , (y_{(j,\ell)} )_{1 \leq j, \ell \leq k}, \big( {{z}}_{(j,i,\theta)} \big)_{\substack{1 \leq j,i \leq {k}\\ \theta \in \Lambda_{i,{k}}}} \Big)  \nonumber  \\
  && + \sum_{j=1}^k \partial_{x_j} F_{k} \Big( \mu, (x_{j})_{1 \leq j \leq k}, (y_{(j,\ell)} )_{1 \leq j, \ell \leq k}, \big( {{z}}_{(j,i,\theta)} \big)_{\substack{1 \leq j,i \leq {k}\\ \theta \in \Lambda_{i,{k}}}} \Big)  z_{(j,1,P_{k+1})} \nonumber \\
  && + \sum_{j,\ell=1}^k \partial_{y_{(j,\ell)}} F_{k} \Big( \mu, (x_{j})_{1 \leq j \leq k}, (y_{(j,\ell)} )_{1 \leq j, \ell \leq k}, \big( {{z}}_{(j,i,\theta)} \big)_{\substack{1 \leq j,i \leq {k}\\ \theta \in \Lambda_{i,{k}}}} \Big)  z_{(j,1,P_{k+1})} \nonumber \\
  && + \sum_{j,i=1}^k \sum_{\theta \in \Lambda_{i,k}} \partial_{z_{(j,i,\theta)}}  F_{k} \Big( \mu, (x_{j})_{1 \leq j \leq k}, (y_{(j,\ell)} )_{1 \leq j, \ell \leq k}, \big( {{z}}_{(j,i,\theta)} \big)_{\substack{1 \leq j,i \leq {k}\\ \theta \in \Lambda_{i,{k}}}} \Big) z_{(j,i+1, \theta_{k+1})}, \quad k \in \{1, \ldots, K-1 \}, \nonumber \\ 
  && \label{recurrence Fk}
\end{eqnarray}
where $P_{k+1} \in \Lambda_{1, k+1}$ is defined by $P_{k+1}(1) = k+1$ and for each $\theta \in \Lambda_{i, k}$, the function $\theta_{k+1}  \in \Lambda_{i+1, k+1}$ is defined such that $ \theta_{k+1} \big|_{ \{1, \ldots, i \}} = \theta $ and $\theta_{k+1} (i+1) = k+1$. Moreover, $F_1$ is given by
\begin{equation}  F_1 (\mu, x,y,z) =  \pmu \sigma ( \mu, y) + \pmu \sigma ( \mu, x) z. \label{eq: F1 expression} \end{equation}
 \end{theorem}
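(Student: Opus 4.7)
The plan is to proceed by induction on $k \in \{1, \ldots, K\}$, with induction hypothesis that $\partial^k_\mu X^{s,[\xi]}_t(v_1, \ldots, v_k)$ exists, is the unique strong solution of the linear SDE \eqref{eq: general formula kth derivative X} with $F_k$ given by the recurrence \eqref{recurrence Fk} and base case \eqref{eq: F1 expression}, and satisfies the bounds \eqref{eq:boundFrechet}--\eqref{eq:LipFrechet}. The base case $k=1$ is precisely Theorem \ref{eq: first order regularity X}: direct comparison of the SDE for $\partial_\mu X^{s,[\xi]}_t(v_1)$ recovers $F_1(\mu,x,y,z) = \pmu\sigma(\mu,y) + \pmu\sigma(\mu,x)\,z$ once one identifies $x$ with $\xi^{(1)}$, $y$ with $(X^{(1)})^{s,v_1,[\xi]}_r$ and $z$ with $\partial_\mu (X^{(1)})^{s,[\xi]}_r(v_1)$, and the two bounds at order $1$ are exactly \eqref{eq: bound regularity}--\eqref{eq: Lipschitz regularity}.

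For the inductive step I would fix $k<K$, assume the hypothesis at order $k$, and formally differentiate the right-hand side of \eqref{eq: general formula kth derivative X} with respect to $[\xi]$ in the new direction $v_{k+1}$. The chain rule for L-derivatives combined with the decomposition \eqref{eq:formaldifXs,xi} of $D_\xi X^{s,\xi}_r$ into its identity part and its through-the-measure part will be applied to each of the four families of $[\xi]$-dependent inputs to $F_k$: (a) the measure $[X^{s,\xi}_r]$ itself; (b) the $j$-th copy evaluations $(X^{(j)})^{s,\xi^{(j)}}_r$; (c) the decoupled processes $(X^{(j)})^{s,v_\ell,[\xi]}_r$; and (d) the lower-order derivatives $\partial^i_\mu (X^{(j)})^{s,[\xi]}_r(v_{\theta(1)},\dots,v_{\theta(i)})$ supplied by the induction hypothesis. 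Each of the six summands in \eqref{recurrence Fk} will arise from a specific differentiation: the two $\pmu F_k$ terms come from the measure slot of $F_k$, split by the two parts of \eqref{eq:formaldifXs,xi}, with a fresh probability space $(\Omega^{(k+1)},\cF^{(k+1)},\bP^{(k+1)})$ produced by introducing the expectation $\bE^{(k+1)}$; the two $\partial_{x_j}$ summations come from the spatial slots of $F_k$ acted on by the same two parts of \eqref{eq:formaldifXs,xi}; the $\partial_{y_{(j,\ell)}}$ summation encodes the measure dependence of the decoupled processes in (c); and the $\partial_{z_{(j,i,\theta)}}$ summation is obtained by applying the inductive formula for $\partial^{i+1}_\mu (X^{(j)})^{s,[\xi]}_r(v_{\theta(1)},\dots,v_{\theta(i)},v_{k+1})$, with the index $k+1$ appended to the strictly increasing tuple $\theta$ via $\theta\mapsto\theta_{k+1}$.

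This calculation will yield a candidate linear SDE at order $k+1$ whose coefficient is the $F_{k+1}$ defined by \eqref{recurrence Fk}; under $\sigma \in \cM_K$, all partial derivatives of $F_{k+1}$ are bounded and globally Lipschitz in their spatial and measure arguments, so the SDE admits a unique strong solution and the bounds \eqref{eq:boundFrechet}--\eqref{eq:LipFrechet} at order $k+1$ follow from BDG together with Gronwall's lemma, using the inductive bounds at all lower orders (an embedded induction on the Lipschitz constants of the $F_j$'s may be required to close the estimates). To promote this candidate to the genuine $(k+1)$-th derivative I would replicate the strategy of Theorem \ref{eq: first order regularity X}: denote the candidate by $U$, establish the duality identity
\[
D_\xi\!\left(\partial^k_\mu X^{s,[\xi]}_t(v_1,\dots,v_k)\right)\!(\eta) = \hat\bE\!\left[U^{s,[\xi]}_t(v_1,\dots,v_k,\hat\xi)\,\hat\eta\right]
\]
for every $\eta \in L^2(\cF_s)$ by showing both sides solve the same linear SDE after manipulating the independent copies through Fubini as in \eqref{ U directional derivative equality independence argument}--\eqref{ U directional derivative equality fubini}, and then upgrade G\^ateaux to Fr\'echet differentiability via continuity of the G\^ateaux map in $\xi$, which follows from the Lipschitz bound at order $k+1$. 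The hardest part will be the combinatorial bookkeeping: verifying that the contributions produced by the chain rule reassemble exactly into the six summands of \eqref{recurrence Fk}, correctly relabelling the index sets $\Lambda_{i,k}\to\Lambda_{i+1,k+1}$ under $\theta\mapsto\theta_{k+1}$, and ensuring that no crossed contribution is missed or double-counted when the several probability spaces $\{(\Omega^{(j)},\cF^{(j)},\bP^{(j)})\}_j$ interact.
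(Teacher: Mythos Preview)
Your proposal is correct and follows essentially the same approach as the paper: strong induction on $k$ with the base case supplied by Theorem \ref{eq: first order regularity X}, formal differentiation of the order-$k$ SDE in the measure direction via the chain rule and the decomposition \eqref{eq:formaldifXs,xi}, introduction of a candidate process $U_{k+1}$ solving the resulting linear SDE, identification $D_\xi(\partial^k_\mu X)(\eta)=\hat\bE[U_{k+1}(\ldots,\hat\xi)\hat\eta]$ by showing both sides satisfy the same SDE using the Fubini/independence manipulations \eqref{ U directional derivative equality independence argument}--\eqref{ U directional derivative equality fubini}, and finally the Gronwall-type bounds to close the estimates and upgrade G\^ateaux to Fr\'echet. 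The paper carries out exactly these steps, writing out the lengthy expression for $U_{k^*+1}$ explicitly and then matching it term-by-term against the recurrence \eqref{recurrence Fk}; your anticipation that the combinatorial bookkeeping of the six summands and the relabelling $\theta\mapsto\theta_{k+1}$ is the tedious part is accurate.
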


\begin{proof}
We remark that the functions $F_k$, $k \in \{1, \ldots, K \}$, are well-defined, since $\sigma \in \cM_K$. We proceed by strong induction on $k \in \{1, \ldots, K\}$. The base step $k=1$ is done in Theorem \ref{eq: first order regularity X}. In particular, \eqref{eq: U final formula} verifies \eqref{eq: F1 expression}. The main arguments in the induction step are the same as the base step. Suppose that the statement holds for all $k \in \{1, \ldots, k^{*} \}$, where $k^{*} \in \{1, \ldots, K-1 \}$. Then, in particular, $\partial^{k^{*}}_{\mu} X^{s, [\xi]}_t (v_1, \ldots, v_{k^{*}})$ satisfies the SDE
\begin{eqnarray}
\partial^{k^{*}}_{\mu} X^{s, [\xi]}_t (v_1, \ldots, v_{k^{*}}) & = & \int_s^t \bE^{(1)} \bE^{(2)} \ldots \bE^{({k^{*}})} \bigg[ {F}_{k^{*}} \bigg( [X^{s, \xi}_r], \Big(  \big( X^{(j)} \big)^{s, {\xi}^{(j)}}_r  \Big)_{ 1 \leq j \leq {k^{*}}} ,  \Big(  \big( X^{(j)} \big)^{s, v_{\ell}, [\xi]}_r  \Big)_{ 1 \leq j,\ell \leq {k^{*}}},  \nonumber \\
&& \Big( \partial^i_{\mu} \big( X^{(j)} \big)^{s, [\xi] }_r (v_{\theta(1)}, \ldots, v_{\theta(i)}) \Big)_{\substack{1 \leq j,i \leq {k^{*}}\\ \theta \in \Lambda_{i,{k^{*}}}}} \bigg) \bigg] \,dW_r. \label{eq: kth order derivative long formula}  
\end{eqnarray}
Let $\tilde{F}_{k^{*}}$ be the lift of $F_{k^{*}}$. In the following expression, $\partial_{x} \tilde{F}_{k^{*}} $ denotes the partial derivative with respect to the lifted component of $\tilde{F_k}$.   As in \eqref{eq:formaldifXs,xi} and \eqref{eq:couplingYsxxi1}, we formally differentiate \eqref{eq: kth order derivative long formula} with respect to $\xi$ in the direction $\eta$ to obtain the directional derivative  
  \begin{eqnarray}
&& D_{\xi} \big( \partial^{{k^{*}}}_{\mu} X^{s, [\xi]}_t (v_1, \ldots, v_{k^{*}}) \big)(\eta) \nonumber \\
& = & \int_s^t \bE^{(1)} \bE^{(2)} \ldots \bE^{({k^{*}})} \bigg[ \partial_{x} \tilde{F}_{k^{*}} \bigg( [X^{s, \xi}_r], \Big(  \big( X^{(j)} \big)^{s, {\xi}^{(j)}}_r  \Big)_{ j} ,  \Big(  \big( X^{(j)} \big)^{s, v_{\ell}, [\xi]}_r  \Big)_{  j,\ell},  \nonumber \\
&& \Big( \partial^i_{\mu} \big( X^{(j)} \big)^{s, [\xi] }_r (v_{\theta(1)}, \ldots, v_{\theta(i)}) \Big)_{j,i,\theta} \bigg)  \bigg(\eta + D_{\xi} (X^{s,x,[\xi]}_r)(\eta) \Big|_{x= \xi} \bigg) \bigg] \,dW_r \nonumber \\
&& + \sum_{j=1}^{{k^{*}}} \int_s^t \bE^{(1)} \bE^{(2)} \ldots \bE^{({k^{*}})} \bigg[ \partial_{x_j} \tilde{F}_{k^{*}} \bigg( [X^{s, \xi}_r], \Big(  \big( X^{(j)} \big)^{s, {\xi}^{(j)}}_r  \Big)_{ j} ,  \Big(  \big( X^{(j)} \big)^{s, v_{\ell}, [\xi]}_r  \Big)_{  j,\ell},  \nonumber \\
&& \Big( \partial^i_{\mu} \big( X^{(j)} \big)^{s, [\xi] }_r (v_{\theta(1)}, \ldots, v_{\theta(i)}) \Big)_{j,i,\theta} \bigg)  \bigg( \eta^{(j)} +  D_{\xi} \big( \big( X^{(j)} \big)^{s, x, [\xi] }_r  \big)(\eta^{(j)}) \Big|_{x= \xi^{(j)}} \bigg) \bigg]  \,dW_r \nonumber \\
&& + \sum_{j,\ell=1}^{k^{*}} \int_s^t \bE^{(1)} \bE^{(2)} \ldots \bE^{({k^{*}})} \bigg[ \partial_{y_{(j,\ell)}} \tilde{F}_{k^{*}} \bigg( [X^{s, \xi}_r], \Big(  \big( X^{(j)} \big)^{s, {\xi}^{(j)}}_r  \Big)_{ j} ,  \Big(  \big( X^{(j)} \big)^{s, v_{\ell}, [\xi]}_r  \Big)_{  j,\ell},  \nonumber \\
&& \Big( \partial^i_{\mu} \big( X^{(j)} \big)^{s, [\xi] }_r (v_{\theta(1)}, \ldots, v_{\theta(i)}) \Big)_{j,i,\theta} \bigg)  D_{\xi} \Big( \big(X^{(j)} \big)^{s, v_{\ell}, [\xi]}_r \Big)(\eta^{(j)}) \bigg] \,dW_r \nonumber \\
&& + \sum_{j,i=1}^{k^{*}} \sum_{\theta \in \Lambda_{i,{k^{*}}}}  \int_s^t \bE^{(1)} \bE^{(2)} \ldots \bE^{({k^{*}})} \bigg[ \partial_{z_{(j,i,\theta)}} \tilde{F}_{k^{*}} \bigg( [X^{s, \xi}_r], \Big(  \big( X^{(j)} \big)^{s, {\xi}^{(j)}}_r  \Big)_{ j} ,  \Big(  \big( X^{(j)} \big)^{s, v_{\ell}, [\xi]}_r  \Big)_{  j,\ell},  \nonumber \\
&& \Big( \partial^i_{\mu} \big( X^{(j)} \big)^{s, [\xi] }_r (v_{\theta(1)}, \ldots, v_{\theta(i)}) \Big)_{j,i,\theta} \bigg)  D_{\xi} \big(\partial^i_{\mu} \big( X^{(j)} \big)^{s, [\xi] }_r (v_{\theta(1)}, \ldots, v_{\theta(i)}) \big)(\eta^{(j)})  \bigg] \,dW_r. \nonumber \label{eq: directional derivatives expansion in terms of directional derivatives} 
  \end{eqnarray}

    We then recall that the following directional derivatives can be represented as 
 $$D_{\xi} (X^{s,x,[\xi]}_r)(\eta) \Big|_{x= \xi}=  \hat{\bE} \Big[ \pmu X^{s, [\xi]}_r (\hat{\xi}) \hat{\eta} \Big], \quad \quad D_{\xi} \Big( \big(X^{(j)} \big)^{s, v_{\ell}, [\xi]}_r \Big)(\eta^{(j)}) = \hat{\bE} \Big[ \pmu \big(X^{(j)} \big)^{s, [\xi]}_r (\hat{\xi}) \hat{\eta} \Big],$$ 
 \[
 D_{\xi} \big( \big( X^{(j)} \big)^{s, x, [\xi] }_r \big)(\eta^{(j)}) \Big|_{x= \xi^{(j)}} =  \hat{\bE} \Big[ \partial_{\mu} \big( X^{(j)} \big)^{s, [\xi] }_r ( \hat{\xi}) \hat{\eta} \Big]
\]
 and
 \[
 D_{\xi} \big( \partial^i_{\mu} \big( X^{(j)} \big)^{s,  [\xi] }_r (v_{\theta(1)}, \ldots, v_{\theta(i)})\big)(\eta^{(j)})  =  \hat{\bE} \Big[ \partial^{i+1}_{\mu} \big( X^{(j)} \big)^{s, [\xi] }_r (v_{\theta(1)}, \ldots, v_{\theta(i)}, \hat{\xi}) \hat{\eta} \Big], \quad i \in \{1, \ldots ,{k^{*}}-1 \}.  
\]
We can therefore rewrite \eqref{eq: directional derivatives expansion in terms of directional derivatives} as
\begin{eqnarray}
&& D_{\xi} \big( \partial^{{k^{*}}}_{\mu} X^{s, [\xi]}_t (v_1, \ldots, v_{k^{*}}) \big)(\eta) \nonumber \\
& = & \int_s^t \bE^{(1)} \bE^{(2)} \ldots \bE^{({k^{*}})} \bE^{({k^{*}}+1)}\bigg[ \partial_{\mu} F_{k^{*}} \bigg( [X^{s, \xi}_r], \Big(  \big( X^{(j)} \big)^{s, {\xi}^{(j)}}_r  \Big)_{ j} ,  \Big(  \big( X^{(j)} \big)^{s, v_{\ell}, [\xi]}_r  \Big)_{  j,\ell},  \nonumber \\
&& \Big( \partial^i_{\mu} \big( X^{(j)} \big)^{s, [\xi] }_r (v_{\theta(1)}, \ldots, v_{\theta(i)}) \Big)_{j,i,\theta}, \big(X^{({k^{*}}+1)}  \big)^{s, \xi^{({k^{*}}+1)}}_r \bigg) \bigg(\eta^{({k^{*}}+1)} + \hat{\bE} \Big[ \pmu \big(X^{({k^{*}}+1)}  \big)^{s, [\xi]}_r (\hat{\xi}) \hat{\eta} \Big]  \bigg) \bigg] \,dW_r \nonumber \\
&& + \sum_{j=1}^{{k^{*}}} \int_s^t \bE^{(1)} \bE^{(2)} \ldots \bE^{({k^{*}})} \bigg[ \partial_{x_j} F_{k^{*}} \bigg( [X^{s, \xi}_r],  \Big(  \big( X^{(j)} \big)^{s, {\xi}^{(j)}}_r  \Big)_{ j} ,  \Big(  \big( X^{(j)} \big)^{s, v_{\ell}, [\xi]}_r  \Big)_{  j,\ell},  \nonumber \\
&& \Big( \partial^i_{\mu} \big( X^{(j)} \big)^{s, [\xi] }_r (v_{\theta(1)}, \ldots, v_{\theta(i)}) \Big)_{j,i,\theta} \bigg) \bigg(  \eta^{(j)} + \hat{\bE} \Big[ \partial_{\mu} \big( X^{(j)} \big)^{s, [\xi] }_r ( \hat{\xi}) \hat{\eta} \Big] \bigg) \bigg]  \,dW_r \nonumber \\
&& + \sum_{j,\ell=1}^{k^{*}} \int_s^t \bE^{(1)} \bE^{(2)} \ldots \bE^{({k^{*}})} \bigg[ \partial_{y_{(j,\ell)}} F_{k^{*}} \bigg( [X^{s, \xi}_r], \Big(  \big( X^{(j)} \big)^{s, {\xi}^{(j)}}_r  \Big)_{ j} ,  \Big(  \big( X^{(j)} \big)^{s, v_{\ell}, [\xi]}_r  \Big)_{  j,\ell},  \nonumber \\
&& \Big( \partial^i_{\mu} \big( X^{(j)} \big)^{s, [\xi] }_r (v_{\theta(1)}, \ldots, v_{\theta(i)}) \Big)_{j,i,\theta}  \bigg) \hat{\bE} \Big[ \pmu \big(X^{(j)} \big)^{s, [\xi]}_r (\hat{\xi}) \hat{\eta} \Big] \bigg] \,dW_r  \nonumber \\
&& + \sum_{j=1}^{k^{*}} \sum_{i=1}^{{k^{*}}-1}  \sum_{\theta \in \Lambda_{i,{k^{*}}}} \int_s^t \bE^{(1)} \bE^{(2)} \ldots \bE^{({k^{*}})} \bigg[ \partial_{z_{(j,i,\theta)}} F_{k^{*}} \bigg( [X^{s, \xi}_r], \Big(  \big( X^{(j)} \big)^{s, {\xi}^{(j)}}_r  \Big)_{ j} ,  \Big(  \big( X^{(j)} \big)^{s, v_{\ell}, [\xi]}_r  \Big)_{  j,\ell},  \nonumber \\
&& \Big( \partial^i_{\mu} \big( X^{(j)} \big)^{s, [\xi] }_r (v_{\theta(1)}, \ldots, v_{\theta(i)}) \Big)_{j,i,\theta}  \bigg)  \hat{\bE} \Big[ \partial^{i+1}_{\mu} \big( X^{(j)} \big)^{s, [\xi] }_r (v_{\theta(1)}, \ldots, v_{\theta(i)}, \hat{\xi}) \hat{\eta} \Big] \bigg] \,dW_r \nonumber \\
&& + \sum_{j=1}^{k^{*}}  \int_s^t \bE^{(1)} \bE^{(2)} \ldots \bE^{({k^{*}})} \bigg[ \partial_{z_{(j,{k^{*}},\mathbf{I_{k^{*}}})}} F_{k^{*}} \bigg( [X^{s, \xi}_r], \Big(  \big( X^{(j)} \big)^{s, {\xi}^{(j)}}_r  \Big)_{ j} ,  \Big(  \big( X^{(j)} \big)^{s, v_{\ell}, [\xi]}_r  \Big)_{  j,\ell},  \nonumber \\
&& \Big( \partial^i_{\mu} \big( X^{(j)} \big)^{s, [\xi] }_r (v_{\theta(1)}, \ldots, v_{\theta(i)}) \Big)_{j,i,\theta}  \bigg)  D_{\xi} \Big( \partial^{{k^{*}}}_{\mu} \big( X^{(j)} \big)^{s, [\xi]}_t (v_1, \ldots, v_{k^{*}}) \Big)(\eta) \bigg] \,dW_r, \nonumber \\
\end{eqnarray}
where, on the second last line, $\mathbf{I_{k^{*}}}$ denotes the identity function from $\{ 1, \ldots, {k^{*}} \}$ to itself. 
We now define a process $ \big\{ \big( U_{{k^{*}}+1} \big)^{s,[\xi]}_t (v_1, \ldots, v_{{k^{*}}+1}) \big\}_{t \in [s,T]}$ that satisfies the SDE
\begin{eqnarray}
&& \big( U_{{k^{*}}+1}  \big) ^{s,[\xi]}_t (v_1, \ldots, v_{{k^{*}}+1}) \nonumber \\
& = & \int_s^t \bE^{(1)} \bE^{(2)} \ldots \bE^{({k^{*}})} \bE^{({k^{*}}+1)} \bigg[ \pmu F_{k^{*}} \bigg( [X^{s, \xi}_r], \Big(  \big( X^{(j)} \big)^{s, {\xi}^{(j)}}_r  \Big)_{ j} ,  \Big(  \big( X^{(j)} \big)^{s, v_{\ell}, [\xi]}_r  \Big)_{  j,\ell},  \nonumber \\
&& \Big( \partial^i_{\mu} \big( X^{(j)} \big)^{s, [\xi] }_r (v_{\theta(1)}, \ldots, v_{\theta(i)}) \Big)_{j,i,\theta}, \big( X^{({k^{*}}+1)} \big)^{s, v_{{k^{*}}+1}, [\xi]}_r  \bigg)     \bigg] \,dW_r \nonumber \\
&& + \int_s^t \bE^{(1)} \bE^{(2)} \ldots \bE^{({k^{*}})} \bE^{({k^{*}}+1)} \bigg[ \pmu F_{k^{*}} \bigg( [X^{s, \xi}_r], \Big(  \big( X^{(j)} \big)^{s, {\xi}^{(j)}}_r  \Big)_{ j} ,  \Big(  \big( X^{(j)} \big)^{s, v_{\ell}, [\xi]}_r  \Big)_{  j,\ell},  \nonumber \\
&& \Big( \partial^i_{\mu} \big( X^{(j)} \big)^{s, [\xi] }_r (v_{\theta(1)}, \ldots, v_{\theta(i)}) \Big)_{j,i,\theta}, \big( X^{({k^{*}}+1)} \big)^{s,  \xi^{({k^{*}}+1)}}_r  \bigg) \partial_{\mu} \big( X^{({k^{*}}+1)} \big)^{s, [\xi]}_r   (v_{{k^{*}}+1}) \bigg] \,dW_r \nonumber \\
&& + \sum_{j=1}^{{k^{*}}} \int_s^t  \bE^{(1)} \bE^{(2)} \ldots \bE^{({k^{*}})} \bigg[ \partial_{x_{j}} F_{k^{*}}  \bigg( [X^{t, \xi}_r], \Big(  \big( X^{(1)} \big)^{s, {\xi}^{(1)}}_r, \ldots, \big( X^{(j-1)} \big)^{s, {\xi}^{(j-1)}}_r, \big( X^{(j)} \big)^{s, v_{k^{*}+1}, [\xi]}_r ,  \nonumber \\
&& \big( X^{(j+1)} \big)^{s, {\xi}^{(j+1)}}_r ,  \ldots,  \big( X^{(k^{*})} \big)^{s, {\xi}^{(k^{*})}}_r \Big) ,  \Big(  \big( X^{(j)} \big)^{s, v_{\ell}, [\xi]}_r  \Big)_{  j,\ell},   \Big( \partial^i_{\mu} \big( X^{(j)} \big)^{s, [\xi] }_r (v_{\theta(1)}, \ldots, v_{\theta(i)}) \Big)_{j,i,\theta} \bigg)  \bigg] \,dW_r \nonumber \\
&& + \sum_{j=1}^{k^{*}}  \int_s^t \bE^{(1)} \bE^{(2)} \ldots \bE^{({k^{*}})} \bigg[ \partial_{x_{j}} F_{k^{*}}  \bigg( [X^{s, \xi}_r], \Big(  \big( X^{(j)} \big)^{s, {\xi}^{(j)}}_r  \Big)_{ j}  ,  \Big(  \big( X^{(j)} \big)^{s, v_{\ell}, [\xi]}_r  \Big)_{  j,\ell},  \nonumber \\
&& \Big( \partial^i_{\mu} \big( X^{(j)} \big)^{s, [\xi] }_r (v_{\theta(1)}, \ldots, v_{\theta(i)}) \Big)_{j,i,\theta}  \bigg)  \partial_{\mu} \big( X^{(j)} \big)^{s, [\xi]}_r (v_{{k^{*}}+1}) \bigg] \,dW_r   \nonumber \\
&& + \sum_{j,\ell=1}^{k^{*}}  \int_s^t \bE^{(1)} \bE^{(2)} \ldots \bE^{({k^{*}})} \bigg[ \partial_{y_{(j,\ell)}} F_{k^{*}}  \bigg( [X^{s, \xi}_r], \Big(  \big( X^{(j)} \big)^{s, {\xi}^{(j)}}_r  \Big)_{ j}  ,  \Big(  \big( X^{(j)} \big)^{s, v_{\ell}, [\xi]}_r  \Big)_{  j,\ell},  \nonumber \\
&& \Big( \partial^i_{\mu} \big( X^{(j)} \big)^{s, [\xi] }_r (v_{\theta(1)}, \ldots, v_{\theta(i)}) \Big)_{j,i,\theta} \bigg)  \partial_{\mu} \big( X^{(j)} \big)^{s, [ \xi]}_r (v_{{k^{*}}+1})  \bigg] \,dW_r \nonumber \\
&& + \sum_{j=1}^{k^{*}} \sum_{i=1}^{{k^{*}}-1}  \sum_{\theta \in \Lambda_{i,{k^{*}}}}   \int_s^t \bE^{(1)} \bE^{(2)} \ldots \bE^{({k^{*}})} \bigg[ \partial_{z_{(j,i,\theta)}} F_{k^{*}}  \bigg( [X^{s, \xi}_r], \Big(  \big( X^{(j)} \big)^{s, {\xi}^{(j)}}_r  \Big)_{ j}  ,  \Big(  \big( X^{(j)} \big)^{s, v_{\ell}, [\xi]}_r  \Big)_{  j,\ell},  \nonumber \\
&& \Big( \partial^i_{\mu} \big( X^{(j)} \big)^{s, [\xi] }_r (v_{\theta(1)}, \ldots, v_{\theta(i)}) \Big)_{j,i,\theta} \bigg)  \partial^{i+1}_{\mu} \big( X^{(j)} \big)^{s, [\xi]}_r (v_{\theta(1)}, \ldots, v_{\theta(i)}, v_{{k^{*}}+1}) \bigg] \,dW_r   \nonumber \\
&& + \sum_{j=1}^{k^{*}} \int_s^t \bE^{(1)} \bE^{(2)} \ldots \bE^{({k^{*}})} \bigg[ \partial_{z_{(j,{k^{*}},\mathbf{I_{k^{*}}})}} F_{k^{*}}  \bigg( [X^{s, \xi}_r], \Big(  \big( X^{(j)} \big)^{s, {\xi}^{(j)}}_r  \Big)_{ j}  ,  \Big(  \big( X^{(j)} \big)^{s, v_{\ell}, [\xi]}_r  \Big)_{  j,\ell},  \nonumber \\
&& \Big( \partial^i_{\mu} \big( X^{(j)} \big)^{s, [\xi] }_r (v_{\theta(1)}, \ldots, v_{\theta(i)}) \Big)_{j,i,\theta} \bigg) \big( U_{{k^{*}}+1}^{(j)} \big) ^{s,[\xi]}_t (v_1, \ldots, v_{{k^{*}}+1}) \bigg] \,dW_r.     \label{eq: Uk+1 expression} 
\end{eqnarray}
Then we write
\begin{eqnarray}
&& \hat{\bE} \Big[ \big( U_{{k^{*}}+1}  \big) ^{s,[\xi]}_t (v_1, \ldots, v_{k^{*}}, \hat{\xi}) \hat{\eta} \Big] \nonumber \\
& = &  \int_s^t \hat{\bE} \bigg[ \bE^{(1)} \bE^{(2)} \ldots \bE^{({k^{*}})} \bE^{({k^{*}}+1)} \bigg[ \pmu F_{k^{*}} \bigg( [X^{s, \xi}_r], \Big(  \big( X^{(j)} \big)^{s, {\xi}^{(j)}}_r  \Big)_{ j} ,  \Big(  \big( X^{(j)} \big)^{s, v_{\ell}, [\xi]}_r  \Big)_{  j,\ell},  \nonumber \\
&& \Big( \partial^i_{\mu} \big( X^{(j)} \big)^{s, [\xi] }_r (v_{\theta(1)}, \ldots, v_{\theta(i)}) \Big)_{j,i,\theta}, \big( X^{({k^{*}}+1)} \big)^{s, v_{{k^{*}}+1}, [\xi]}_r  \bigg)     \bigg] \bigg|_{v_{k^{*}+1}= \hat{\xi}} \hat{\eta} \bigg] \,dW_r \nonumber \\
&& + \int_s^t \hat{\bE} \bigg[ \bE^{(1)} \bE^{(2)} \ldots \bE^{({k^{*}})} \bE^{({k^{*}}+1)} \bigg[ \pmu F_{k^{*}} \bigg( [X^{s, \xi}_r], \Big(  \big( X^{(j)} \big)^{s, {\xi}^{(j)}}_r  \Big)_{ j} ,  \Big(  \big( X^{(j)} \big)^{s, v_{\ell}, [\xi]}_r  \Big)_{  j,\ell},  \nonumber \\
&& \Big( \partial^i_{\mu} \big( X^{(j)} \big)^{s, [\xi] }_r (v_{\theta(1)}, \ldots, v_{\theta(i)}) \Big)_{j,i,\theta}, \big( X^{({k^{*}}+1)} \big)^{s,  \xi^{({k^{*}}+1)}}_r  \bigg) \partial_{\mu} \big( X^{({k^{*}}+1)} \big)^{s, [\xi]}_r   (v_{{k^{*}}+1}) \bigg] \bigg|_{v_{k^{*}+1}= \hat{\xi}} \hat{\eta}  \bigg] \,dW_r \nonumber \\
&& + \sum_{j=1}^{k^{*}} \int_s^t \hat{\bE} \bigg[  \bE^{(1)} \bE^{(2)} \ldots \bE^{({k^{*}})} \bigg[ \partial_{x_{j}} F_{k^{*}}  \bigg( [X^{t, \xi}_r], \nonumber \\
&& \Big(  \big( X^{(1)} \big)^{s, {\xi}^{(1)}}_r, \ldots, \big( X^{(j-1)} \big)^{s, {\xi}^{(j-1)}}_r, \big( X^{(j)} \big)^{s, v_{k^{*}+1}, [\xi]}_r , \big( X^{(j+1)} \big)^{s, {\xi}^{(j+1)}}_r ,  \ldots,  \big( X^{(k^{*})} \big)^{s, {\xi}^{(k^{*})}}_r \Big) ,  \nonumber \\
&& \Big(  \big( X^{(j)} \big)^{s, v_{\ell}, [\xi]}_r  \Big)_{  j,\ell},   \Big( \partial^i_{\mu} \big( X^{(j)} \big)^{s, [\xi] }_r (v_{\theta(1)}, \ldots, v_{\theta(i)}) \Big)_{j,i,\theta} \bigg)  \bigg] \bigg|_{v_{k^{*}+1}= \hat{\xi}} \hat{\eta}  \bigg] \,dW_r \nonumber \\
&& + \sum_{j=1}^{k^{*}} \int_s^t \hat{\bE} \bigg[  \bE^{(1)} \bE^{(2)} \ldots \bE^{({k^{*}})} \bigg[ \partial_{x_{j}} F_{k^{*}}  \bigg( [X^{s, \xi}_r], \Big(  \big( X^{(j)} \big)^{s, {\xi}^{(j)}}_r  \Big)_{ j}  ,  \Big(  \big( X^{(j)} \big)^{s, v_{\ell}, [\xi]}_r  \Big)_{  j,\ell},  \nonumber \\
&& \Big( \partial^i_{\mu} \big( X^{(j)} \big)^{s, [\xi] }_r (v_{\theta(1)}, \ldots, v_{\theta(i)}) \Big)_{j,i,\theta}  \bigg)  \partial_{\mu} \big( X^{(j)} \big)^{s, [\xi]}_r (v_{{k^{*}}+1}) \bigg] \bigg|_{v_{k^{*}+1}= \hat{\xi}} \hat{\eta}  \bigg] \,dW_r \nonumber \\
&& + \sum_{j,\ell=1}^{k^{*}}   \int_s^t \hat{\bE} \bigg[  \bE^{(1)} \bE^{(2)} \ldots \bE^{({k^{*}})} \bigg[ \partial_{y_{(j,\ell)}} F_{k^{*}}  \bigg( [X^{s, \xi}_r], \Big(  \big( X^{(j)} \big)^{s, {\xi}^{(j)}}_r  \Big)_{ j}  ,  \Big(  \big( X^{(j)} \big)^{s, v_{\ell}, [\xi]}_r  \Big)_{  j,\ell},  \nonumber \\
&& \Big( \partial^i_{\mu} \big( X^{(j)} \big)^{s, [\xi] }_r (v_{\theta(1)}, \ldots, v_{\theta(i)}) \Big)_{j,i,\theta} \bigg)  \partial_{\mu} \big( X^{(j)} \big)^{s, [ \xi]}_r (v_{{k^{*}}+1})  \bigg] \bigg|_{v_{k^{*}+1}= \hat{\xi}} \hat{\eta}  \bigg] \,dW_r   \nonumber \\
&& + \sum_{j=1}^{k^{*}} \sum_{i=1}^{{k^{*}}-1}  \sum_{\theta \in \Lambda_{i,{k^{*}}}}   \int_s^t \hat{\bE} \bigg[  \bE^{(1)} \bE^{(2)} \ldots \bE^{({k^{*}})} \bigg[ \partial_{z_{(j,i,\theta)}} F_{k^{*}}  \bigg( [X^{s, \xi}_r], \Big(  \big( X^{(j)} \big)^{s, {\xi}^{(j)}}_r  \Big)_{ j}  ,  \Big(  \big( X^{(j)} \big)^{s, v_{\ell}, [\xi]}_r  \Big)_{  j,\ell},  \nonumber \\
&& \Big( \partial^i_{\mu} \big( X^{(j)} \big)^{s, [\xi] }_r (v_{\theta(1)}, \ldots, v_{\theta(i)}) \Big)_{j,i,\theta} \bigg)  \partial^{i+1}_{\mu} \big( X^{(j)} \big)^{s, [\xi]}_r (v_{\theta(1)}, \ldots, v_{\theta(i)}, v_{{k^{*}}+1}) \bigg] \bigg|_{v_{k^{*}+1}= \hat{\xi}} \hat{\eta}  \bigg] \,dW_r   \nonumber \\
&& + \sum_{j=1}^{k^{*}} \int_s^t \hat{\bE} \bigg[ \bE^{(1)} \bE^{(2)} \ldots \bE^{({k^{*}})} \bigg[ \partial_{z_{(j,{k^{*}},\mathbf{I_{k^{*}}})}} F_{k^{*}}  \bigg( [X^{s, \xi}_r], \Big(  \big( X^{(j)} \big)^{s, {\xi}^{(j)}}_r  \Big)_{ j}  ,  \Big(  \big( X^{(j)} \big)^{s, v_{\ell}, [\xi]}_r  \Big)_{  j,\ell},  \nonumber \\
&& \Big( \partial^i_{\mu} \big( X^{(j)} \big)^{s, [\xi] }_r (v_{\theta(1)}, \ldots, v_{\theta(i)}) \Big)_{j,i,\theta} \bigg) \big( U_{{k^{*}}+1}^{(j)} \big) ^{s,[\xi]}_t (v_1, \ldots, v_{{k^{*}}+1}) \bigg] \bigg|_{v_{k^{*}+1}= \hat{\xi}} \hat{\eta}  \bigg] \,dW_r.   \nonumber
\end{eqnarray}
As in the proof of Theorem \ref{eq: first order regularity X}, we deduce that $ D_{\xi} \big( \partial^{{k^{*}} }_{\mu} X^{s, [\xi]} (v_1, \ldots, v_{k^{*}} ) \big)(\eta) $ satisfies the same SDE as  $\hat{\bE} \Big[ \big( U_{{k^{*}} +1}  \big) ^{s,[\xi]} (v_1, \ldots, v_{k^{*}} , \hat{\xi}) \hat{\eta} \Big]$. (Note that equality of the first and third terms follows from the same argument as  \eqref{ U directional derivative equality independence argument}   and equality of the other terms follows from the same argument as \eqref{ U directional derivative equality fubini}.) Consequently,  
\begin{equation} D_{\xi} \big( \partial^{{k^{*}} }_{\mu} X^{s, [\xi]}_t (v_1, \ldots, v_{k^{*}} ) \big)(\eta) = \hat{\bE} \Big[ \big( U_{{k^{*}} +1}  \big) ^{s,[\xi]}_t (v_1, \ldots, v_{k^{*}} , \hat{\xi}) \hat{\eta} \Big]. \label{induction identification}  \end{equation} 
By the induction hypothesis, we can again establish that  (as in the proof of Theorem \ref{eq: first order regularity X})
\begin{enumerate}[(i)]
\item $$ \bE \bigg[ \sup_{t \in [s,T]} \big| \big( U_{{k^{*}} +1}  \big)^{s,[\xi]}_t (v_1, \ldots,  v_{{k^{*}} +1}) \big|^2 \bigg] \leq C, $$
\item $$ \bE \bigg[ \sup_{t \in [s,T]} \big| \big( U_{{k^{*}} +1}  \big)^{s,[\xi]}_t  (v_1, \ldots,  v_{{k^{*}} +1})-\big( U_{{k^{*}} +1}  \big)^{s,[\xi']}_t (v'_1, \ldots, v'_{{k^{*}} +1})  \big|^2 \bigg] \leq C \bigg(  \sum_{i=1}^{{k^{*}} +1} |v_i-v'_i|^2 + W_2 ([{\xi}],[{\xi'}])^2 \bigg), $$ 
\end{enumerate}
for any $s \in [0,T]$, $v_1, \ldots, v_{{k^{*}} +1}, v'_1, \ldots, v'_{{k^{*}} +1} \in \bR$ and $\xi, \xi' \in L^2 (\cF_s)$, for some constant $C>0$. Subsequently, it follows from the same reasoning as in the proof of Theorem \ref{eq: first order regularity X} and \eqref{induction identification}  that 
$$ \partial^{{k^{*}} +1}_{\mu} X^{s, [\xi]}_t (v_1, \ldots, v_{{k^{*}} +1})  =  \big( U_{{k^{*}} +1}  \big) ^{s,[\xi]}_t (v_1, \ldots, v_{{k^{*}} +1}) . $$ Finally, by the recurrence relation \eqref{recurrence Fk} and the expression of $\big( U_{{k^{*}} +1}  \big) ^{s,[\xi]}_t $ in \eqref{eq: Uk+1 expression}, it is clear that $\partial^{{k^{*}} +1}_{\mu} X^{s, [\xi]}_t (v_1, \ldots, v_{{k^{*}} +1})$ satisfies the SDE 
\begin{eqnarray}
&& \partial^{k^{*}+1}_{\mu} X^{s, [\xi]}_t (v_1, \ldots, v_{k^{*}+1}) \nonumber \\
& = & \int_s^t \bE^{(1)} \bE^{(2)} \ldots \bE^{({k^{*}+1})} \bigg[ {F}_{k^{*}+1} \bigg( [X^{s, \xi}_r], \Big(  \big( X^{(j)} \big)^{s, {\xi}^{(j)}}_r  \Big)_{ 1 \leq j \leq {k^{*}+1}} ,  \Big(  \big( X^{(j)} \big)^{s, v_{\ell}, [\xi]}_r  \Big)_{ 1 \leq j,\ell \leq {k^{*}+1}},  \nonumber \\
&& \Big( \partial^i_{\mu} \big( X^{(j)} \big)^{s, [\xi] }_r (v_{\theta(1)}, \ldots, v_{\theta(i)}) \Big)_{\substack{1 \leq j,i \leq {k^{*}+1}\\ \theta \in \Lambda_{i,{k^{*}+1}}}} \bigg) \bigg] \,dW_r. \nonumber  
\end{eqnarray}
    \end{proof}
    \begin{corollary}
     Suppose that $\sigma$ is in $\cM_k(\cP_2(\bR))$. Then $X^{s,x,[{\xi}]} \in \cM_k(X^{s,x,[{\xi}]}). $
    \end{corollary}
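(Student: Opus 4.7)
My plan is as follows. Theorem \ref{eq:purederimeasure} already delivers the hard part: it establishes existence, uniform $L^2$ boundedness, and joint Lipschitz continuity (in $\xi$ and in the base points $v_i$) for the pure measure derivatives $\partial^n_\mu X^{s,[\xi]}_t(v_1,\ldots,v_n)$ for all $n\le k$. What remains, in order to verify Definition \ref{eq:classmkX}, is to promote these to the mixed derivatives $D^{(n,\bm\beta)} X^{s,[\xi]}_t(v_1,\ldots,v_n)$ for every multi-index with $|(n,\bm\beta)|\le k$ and $\bm\beta\neq 0$, and to check that bounds \eqref{eq:boundFrechet} and \eqref{eq:LipFrechet} persist through these extra spatial differentiations.

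The approach is to proceed by induction on the total order $|(n,\bm\beta)|$, formally differentiating the recursive SDE \eqref{eq: general formula kth derivative X} in the spatial variables $v_\ell$. In that SDE, the dependence on each $v_\ell$ enters only through (i) the decoupled processes $(X^{(j)})^{s,v_\ell,[\xi]}_r$ (whose $v_\ell$-derivative is trivially $1$ in the reduced framework where $\sigma$ is measure-only, and is handled by standard SDE flow theory in the general case) and (ii) the lower-order mixed derivatives $D^{(i,\bm\beta')}(X^{(j)})^{s,[\xi]}_r(v_{\theta(1)},\ldots,v_{\theta(i)})$ that already appear inside $F_n$. Differentiating with respect to $v_\ell$ therefore produces a new linear SDE for $\partial_{v_\ell}D^{(n,\bm\beta)}X^{s,[\xi]}_t$, whose coefficients are polynomial expressions in (a) derivatives of $\sigma$ of order at most $n+1\le k$, controlled by $\sigma\in\cM_k$, and (b) mixed derivatives of $X^{s,[\xi]}$ of total order strictly less than the current one, controlled by the inductive hypothesis.

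Existence and the bound \eqref{eq:boundFrechet} then follow by the usual Picard/BDG argument: the resulting linear SDE has bounded driver, so an application of Burkholder--Davis--Gundy followed by Gronwall yields a uniform $L^2$ estimate on $[s,T]$. To identify this formally differentiated object as the actual $L^2$-derivative of $D^{(n,\bm\beta)}X^{s,[\xi]}_t$ in $v_\ell$, one mimics the argument already used twice in the paper: set up a difference quotient, subtract the candidate SDE, and apply Gronwall, using the Lipschitz hypothesis (b) at the previous level to show the residual goes to $0$ in $L^2$. The Lipschitz bound \eqref{eq:LipFrechet} at the new order is then proved by the same Gronwall scheme applied to the difference of two such SDEs indexed by $(\xi,v_1,\ldots,v_n)$ and $(\xi',v_1',\ldots,v_n')$, exploiting the Lipschitz estimates on the derivatives of $\sigma$ together with the Lipschitz stability of the lower-order derivatives coming from the inductive hypothesis and from bounds on the McKean--Vlasov flow $\xi\mapsto X^{s,\xi}$.

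The main obstacle is not analytic but combinatorial: ensuring that the system of SDEs generated by successive spatial differentiations closes at each step, i.e.\ that every term arising in the SDE for $\partial_{v_\ell}D^{(n,\bm\beta)}X^{s,[\xi]}$ is either a derivative of $\sigma$ of order $\le k$ or a mixed derivative of $X^{s,[\xi]}$ of strictly lower order $|(n',\bm\beta')|<|(n,\bm\beta)|+1\le k$. This closure is transparent once one notes that spatial differentiation does not raise the measure-derivative order $n$ and raises the total order $|(n,\bm\beta)|$ by exactly one; hence the induction terminates at $|(n,\bm\beta)|=k$ and the bounds are uniform in $t\in[s,T]$ with constants depending only on the $\cM_k$-norm of $\sigma$, $T$, and the dimension.
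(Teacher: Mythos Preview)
Your proposal is correct and follows essentially the same route as the paper: start from the SDE representation \eqref{eq: general formula kth derivative X} for $\partial^n_\mu X^{s,[\xi]}$, formally differentiate in the spatial variables $v_\ell$, and close the resulting linear SDEs by a Gronwall/BDG argument, the key bookkeeping point being that $F_n$ admits exactly $k-n$ further spatial derivatives because $\sigma\in\cM_k$. The paper's proof is a terse version of your argument (it cites \cite{friedman2012stochastic,krylov1999kolmogorov} for the standard differentiation-of-SDE machinery rather than spelling out the induction on $|(n,\bm\beta)|$); your only slight imprecision is writing ``derivatives of $\sigma$ of order at most $n+1$'' where the correct count is $|(n,\bm\beta)|+1$, but your closure paragraph gets this right.
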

    \begin{proof}
 For any multi-index $( n, \bm{\beta} )$ such that $\big| ( n, \bm{\beta} ) \big|  \leq k,$ we have an SDE representation of \\ $\partial^{n}_{\mu} X^{s,{[\xi]}}_t (v_1, \ldots, v_{n})$, by \eqref{eq: general formula kth derivative X}  in Theorem \ref{eq:purederimeasure}. By \eqref{recurrence Fk} and \eqref{eq: F1 expression}, we know that the function $F_n$ in \eqref{eq: general formula kth derivative X} is differentiable in  the spatial components for at most $k-n$ times. This is exactly what we need, since $|\bm{\beta}| = \beta_1 + \ldots + \beta_n \leq k-n$. Hence, we formally differentiate $\beta_{i}$ times with respect to each variable $v_i$, $1 \leq i \leq n$, and then use a standard Gronwall argument to establish bounds \eqref{eq:boundFrechet} and \eqref{eq:LipFrechet}. (See Theorem 5.5.3 in \cite{friedman2012stochastic} or  Proposition 4.10 in \cite{krylov1999kolmogorov} for details.) 
\end{proof}
We are now in a position to prove Theorem \ref{eq:generalisationmainresult}, via the smoothness of $\sigma $ and  $X^{s,x,[{\xi}]}$.
\begin{proof}[Proof of Theorem \ref{eq:generalisationmainresult}] By combining \eqref{eq:formaldifXs,xi}, \eqref{eq:relationYandU}, \eqref{U equals pmu} and \eqref{no dependence on x}, we deduce that 
$$ \chi: L^2 ( \cF_s) \to L^2( \cF_t); \quad \quad \xi \mapsto X^{s,\xi}_t  $$ 
is Fr\'{e}chet differentiable with Fr\'{e}chet derivative given by
$$ D\chi( \xi)(\eta) =   \eta + \hat{\bE} \big[  \pmu X^{s,[{\xi}]}_t  (\hat{\xi})   \hat{\eta} \big].$$ 
Next, for any fixed $s \in [0,t]$, we define the lifts  $\widetilde{\Phi}: L^2 ( \cF_t) \to \bR$ and $\widetilde{\cV}(s, \cdot): L^2 ( \cF_s) \to \bR$ for functions $\Phi $ and $\cV(s, \cdot)$ respectively, given by
$$\widetilde{\Phi} ( \theta_1) = f( [{\theta_1}]), \quad \widetilde{\cV}(s, \theta_2) = h(s, [{\theta_2}]), \quad \text{for} \quad  \theta_1 \in L^2 ( \cF_t), \quad \theta_2 \in L^2 ( \cF_s).$$
Then, we notice from equation \eqref{eq: defofflow} that 
$$ \widetilde{\cV}(s, \cdot) = \widetilde{\Phi} \circ \chi.$$ 
By the chain rule of Fr\'{e}chet differentiation, we obtain that
$$ D\widetilde{\cV}(s,  \xi) = D\widetilde{\Phi}( \chi(\xi)) \circ D{\chi}(\xi),$$ 
which implies that
\begin{eqnarray}
  D\widetilde{\cV}(s,  \xi)(\eta) & = &  D\widetilde{\Phi}( \chi(\xi))  \big( D{\chi}(\xi) (\eta) \big) \nonumber \\
  & = & \bE  \big[ \pmu \Phi \big([{X^{s, \xi}_t}], X^{s, \xi}_t \big) D{\chi}(\xi) (\eta) \big] \nonumber \\
  &= & \bE  \big[ \pmu \Phi \big([{X^{s, \xi}_t}], X^{s, \xi}_t \big) \big( \eta + \hat{\bE} \big[  \pmu X^{s,[{\xi}]}_t  (\hat{\xi})   \hat{\eta} \big] \big) \big], \label{eq:Frechetfirstorderh}
\end{eqnarray}
for any $\xi, \, \eta \in \cF_s$.
Note that the first term can be rewritten as
\begin{equation}
\bE  \big[ \pmu \Phi \big([{X^{s, \xi}_t}], X^{s, \xi}_t \big) \eta  \big]  =  \bE \big[ \bE ( \pmu \Phi  \big([{X^{s, \xi}_t}], X^{s, x, [{\xi}]}_t \big) \big) \big|_{x=\xi} \eta \big]  \label{eq:Frechetfirstorderh1}
\end{equation}
and the second term can be rewritten by the Fubini's theorem as
\begin{equation}
    \bE  \big[ \pmu \Phi  \big([{X^{s, \xi}_t}], X^{s, \xi}_t \big)  \hat{\bE} \big[ \big( \pmu X^{s, [{\xi}]}_t  (\hat{\xi}) \big)\hat{\eta} \big]  \big] = \hat{\bE} \big[ \bE \big[ \pmu \Phi  \big([{X^{s, \xi}_t}], X^{s, \xi}_t \big) \pmu X^{s,[{\xi}]}_t  (\hat{\xi})  \big] \hat{\eta} \big]. \label{eq:Frechetfirstorderh2} 
\end{equation}
Consequently, by combining \eqref{eq:Frechetfirstorderh1} and \eqref{eq:Frechetfirstorderh2}, equation \eqref{eq:Frechetfirstorderh} becomes 
$$ D\widetilde{\cV}(s,  \xi)(\eta) = \bE \big[ \bE ( \pmu \Phi  \big([{X^{s, \xi}_t}], X^{s, x, [{\xi}]}_t \big) \big) \big|_{x=\xi} \eta \big] +  \hat{\bE} \big[ \bE \big[ \pmu \Phi  \big([{X^{s, \xi}_t}], X^{s, \xi}_t \big) \pmu X^{s,[{\xi}]}_t  (\hat{\xi})  \big] \hat{\eta} \big], $$ which implies that
$$ \pmu \cV(s, [{\xi}])(y)=  \bE \Big[\pmu \Phi  \big([{X^{s, \xi}_t}], X^{s, y, [{\xi}]}_t  \big) + \pmu \Phi  \big([{X^{s, \xi}_t}], X^{s, \xi}_t \big) \pmu X^{s,[{\xi}]}_t  (y)    \Big], \quad \quad y \in \bR. $$ 
By our assumption, we know that $\pmu \Phi$ satisfies \eqref{eq:boundf} and \eqref{eq:Lipf}, and  the process $ \pmu X^{s,[{\xi}]}_t (v_1)$ satisfies \eqref{eq:boundFrechet} and \eqref{eq:LipFrechet}. It follows that $\pmu \cV$ also satisfies \eqref{eq:boundf} and \eqref{eq:Lipf}, with the constant bound $C$ uniform in time. 

By iterating this procedure, we can show that for any multi-index $(n, \bm{\beta})$ such that $|(n, \bm{\beta})| \leq k$, $D^{(n, \bm{\beta})} \cV(s, \mu)(v_1, \ldots, v_{n})$ can be computed explicitly as above and can be represented in terms of derivatives in the form $D^{(n',\bm{\beta'})} X^{s,[\xi]}_t (v'_1, \ldots, v'_{n'})$ and $D^{(n'', \bm{\beta''})} \Phi(\mu) (v''_1, \ldots, v''_{n''}) $, for some $n', n''  \in \bN \cup \{0 \}$, $\bm{\beta'} \in \big( \bN \cup \{0 \} \big)^{n'}$ and   $\bm{\beta''} \in \big( \bN \cup \{0 \} \big)^{n''}$, such that $|(n', \bm{\beta'})| \leq k$ and $|(n'', \bm{\beta''})| \leq k$. The facts that $X^{s,x,[{\xi}]} \in \cM_k(X^{s,x,[{\xi}]}) $ and $\Phi \in \cM_k$ also allow us to deduce that $D^{(n,\bm{\beta})} \cV(s, \mu)(v_1, \ldots, v_{n})$ satisfies estimates \eqref{eq:boundf} and \eqref{eq:Lipf}, with the constant bound $C$ uniform in time. Finally, we know from Theorem 7.2 in \cite{buckdahn2017mean} (which corresponds to Theorem \ref{eq:generalisationmainresult} with $k=2$) that $\cV( \cdot, \mu) \in C^1((0,t)), $  for every $\mu \in \mathcal{P}_2(\bR)$. Therefore, we conclude that $\cV \in \cM_k$.
\end{proof}
\bibliographystyle{plain}
\bibliography{Particles(expansion)}

\end{document}